\newtheorem{theorem}{Theorem}
\theoremstyle{plain}
\newtheorem{condition}{Condition}
\newtheorem{corollary}{Corollary}
\newtheorem{definition}{Definition}
\newtheorem{example}{Example}
\newtheorem{lemma}{Lemma}
\newtheorem{proposition}{Proposition}
\newtheorem{remark}{Remark}
\numberwithin{equation}{section}
\begin{document}
\title[Trajectory of concentration]{Newton's law for a trajectory of
concentration of solutions to nonlinear Schrodinger equation}
\author{Anatoli Babin }
\address{Department of Mathematics \\
University of California at Irvine \\
Irvine, CA 92697-3875, U.S.A.}
\email{ababine@math.uci.edu}
\thanks{Supported by AFOSR grant number FA9550-11-1-0163.}
\author{Alexander Figotin}
\address{Department of Mathematics \\
University of California at Irvine \\
Irvine, CA 92697-3875, U.S.A.}
\email{afigotin@math.uci.edu}
\date{}
\subjclass{ 35Q55; 35Q60; 35Q70; 70S05; 78A35}
\keywords{ Nonlinear Schr\"{o}dinger equation, Newton's law, Lorentz force,
trajectory of concentration}
\dedicatory{Dedicated to the memory of M.I.Vishik }

\begin{abstract}
One of important problems in mathematical physics concerns derivation of
point dynamics from field equations. The most common approach to this
problem is based on WKB\ method. Here we describe a different method based
on the concept of trajectory of concentration. When we applied this method
to nonlinear Klein-Gordon equation, we derived relativistic Newton's law and
Einstein's formula for inertial mass. Here we apply the same approach to
nonlinear Schrodinger equation and derive non-relativistic Newton's law for
the trajectory of concentration.
\end{abstract}

\maketitle

\section{Introduction}

A. Einstein remarks in his letter to Ernst Cassirer of March 16, 1937, \cite[%
pp. 393-394]{Stachel EBZ}: "One must always bear in mind that up to now we
know absolutely nothing about the laws of motion of material points from the
standpoint of "classical field theory." For the mastery of this problem,
however, no special physical hypothesis is needed, but "only" the solution
of certain mathematical problems". In this paper we treat this circle of
problems. Namely, we derive here point dynamics governed by Newton's law
from wave dynamics using concepts and methods of the classical field theory.
That continues our study of a neoclassical model of electric charges which
is designed to provide an accurate description of charge interaction with
the electromagnetic field from macroscopic down to atomic scales, \cite{BF5}-%
\cite{BF9}.

In this paper we derive Newtonian dynamics for localized solutions of
Nonlinear Schrodinger equations (NLS) in the asymptotic limit when the \
solutions converge to delta-functions. The derivation is based on a method
of concentrating solutions which was developed in our previous paper \cite%
{BF9} where we derived the relativistic dynamics and Einstein's formula for
localized solutions of the Nonlinear Klein-Gordon (KG) equation.

In many problems of physics and mathematics a common way to establish a
relation between wave and point dynamics is by means of the WKB method, see
for instance \cite{MaslovFedorjuk}, \cite[Sec. 7.1]{Nayfeh}.\ We remind that
the WKB\ method is based on the quasiclassical ansatz for solutions to a
hyperbolic partial differential equation and their asymptotic expansion. The
leading term of the expansion satisfies the eikonal equation, and
wavepackets and their energy propagate along the so-called characteristics
of this equations. Consequently, these characteristics represent the point
dynamics and are determined from the corresponding system of ODEs which can
be interpreted as a law of motion or a law of propagation. The construction
of the characteristics involves only local data. Asymptotic derivation of
the Newtonian dynamics which does not rely on the fast oscillation of
solutions as the WKB\ method does was developed for soliton-like solutions
of the Nonlinear Schrodinger and Nonlinear Klein-Gordon equations in a
number of papers, see \cite{BronskiJer00}, \cite{FrTY}, \cite{JonssonFGS}, 
\cite{LongS08}. In the mentioned papers the derivation of the limit
Newtonian dynamics of the soliton center relies on the asymptotic analysis\
of an ansatz structure of soliton-like solutions. \emph{The proposed here
approach also relates waves governed by certain PDE's in asymptotic regimes
to the point dynamics but it differs fundamentally from the mentioned methods%
}. In particular, our approach is not based on any specific ansatz and it is
not not entirely local but rather it is semi-local. \ Our semi-local method
is in the spirit of the Ehrenfest theorem and is based on an analysis of a
sequence of concentrating solutions which are not subjected to structural
conditions. Compared to the mentioned approaches which use the WKB method or
soliton-like structure of solutions, our arguments only use very mild
assumptions of a localization of solutions in a small neighborhood of the
trajectory combined with the systematic use there of integral identities
derived from the energy-momentum and density conservation laws for the NLS.
Our method allows to derive Newton's law under minimal restrictions on the
time and spatial dependence of the variable coefficients and on the
nonlinearity and allows to consider arbitrarily long time intervals.

The nonlinear Schrodinger (NLS)\ equation we study in this paper is of the
form 
\begin{equation}
\mathrm{i}\tilde{\partial}_{t}\psi =\frac{\chi }{2m}\left[ -\tilde{\nabla}%
^{2}\psi +G_{a}^{\prime }\left( \psi ^{\ast }\psi \right) \psi \right] .
\label{NLS}
\end{equation}%
In the above equation $\psi =\psi \left( t,\mathbf{x}\right) $\ is a complex
valued wave function, $G_{a}^{\prime }$ is a real valued nonlinearity, and
the \textit{covariant differention operators} $\tilde{\partial}_{t}$ and $%
\tilde{\nabla}$ are defined by%
\begin{equation}
\tilde{\partial}_{t}=\partial _{t}+\frac{\mathrm{i}q}{\chi }\varphi ,\quad 
\tilde{\nabla}=\nabla -\frac{\mathrm{i}q}{\chi \mathrm{c}}\mathbf{A},
\label{covNLS}
\end{equation}%
where $\varphi \left( t,\mathbf{x}\right) $, $\mathbf{A}\left( t,\mathbf{x}%
\right) $ are given twice differentiable functions of time $t$ and spatial
variables $\mathbf{x}\in \mathbb{R}^{3}$ \ interpreted as potentials of
external electric and magnetic fields. The quantity $q\left\vert \psi
\right\vert ^{2}$ is naturally interpreted as the charge density with $q$
being the value of the charge.

In the case when the potentials $\varphi $ and $\mathbf{A}$ are zero, the
charge can be considered as "free" and the NLS\ equation (\ref{NLS})\ has
localized solutions corresponding to resting and uniformly moving charges.
Newton's law involves acceleration, and to produce an accelerated motion
non-zero potentials are required. One may expect point-like dynamics for
strongly localized solutions, and the nonlinearity $G^{\prime }$ provides
existence of localized solutions of the NLS equation. The only condition
imposed on the nonlinearity (in addition to natural continuity assumptions,
see Condition \ref{Dregnon}) is the existence of a positive radial solution $%
\psi _{1}=\mathring{\psi}_{1}\left( \left\vert \mathbf{x}\right\vert \right)
\ $ of the steady-state equation\ for a free charge: 
\begin{equation}
-\nabla ^{2}\mathring{\psi}_{1}+G_{1}^{\prime }(\left\vert \mathring{\psi}%
_{1}\right\vert ^{2})\mathring{\psi}_{1}=0.  \label{free}
\end{equation}%
We assume that $\mathring{\psi}\left( \left\vert \mathbf{x}\right\vert
\right) $\ is twice continuously differentiable for all $\mathbf{x}\in 
\mathbb{R}^{3}$ and is square integrable, that is%
\begin{equation}
\int_{\mathbb{R}^{3}}\left\vert \mathring{\psi}\right\vert ^{2}\mathrm{d}%
\mathbf{x}=\upsilon _{0}<\infty .  \label{norm}
\end{equation}%
Importantly, we assume that the nonlinearity depends explicitly on the size
parameter $a>0$ as follows:\ 
\begin{equation}
G_{a}^{\prime }\left( s\right) =a^{-2}G_{1}^{\prime }\left( a^{3}s\right)
,\quad s>0.  \label{Gas}
\end{equation}%
Note that the steady-state equation with a general $\ a>0$ \ \ takes the
form 
\begin{equation}
\nabla ^{2}\mathring{\psi}_{a}+G_{a}^{\prime }(\left\vert \mathring{\psi}%
_{a}\right\vert ^{2})\mathring{\psi}_{a}=0  \label{nop40}
\end{equation}%
and has the following solution: 
\begin{equation}
\mathring{\psi}_{a}\left( r\right) =a^{-3/2}\mathring{\psi}_{1}\left(
a^{-1}r\right) ,\quad r=\left\vert x\right\vert \geq 0,  \label{nrac5}
\end{equation}%
where $\mathring{\psi}_{1}$ is the solution to equation (\ref{free}) where $%
a=1$. Obviously $\mathring{\psi}_{a}$ satisfies the normalization condition (%
\ref{norm}) \ with the same $\upsilon _{0}$. According to (\ref{nrac5}) $a$
can be interpreted as a natural size parameter which describes $\mathring{%
\psi}_{a}$.

A typical example of the ground state $\mathring{\psi}_{a}$ is the Gaussian $%
\mathring{\psi}_{a}=a^{-3/2}e^{-r^{2}/a^{2}}$ corresponding to a logarithmic
nonlinearity $G_{a}^{\prime }$ discussed in Example \ref{Egau}. Evidently, $%
\upsilon _{0}^{-1}\mathring{\psi}_{a}^{2}$ converges to Dirac's
delta-function $\delta \left( \mathbf{x}\right) $ as $a\rightarrow 0$. \ We
study the behavior of localized solutions of the NLS equation in the
asymptotic regime $a\rightarrow 0$.

The crucial role in our analysis is played by the key concept of
concentrating solutions. Very roughly speaking, \ solutions $\psi =\psi _{n}$
concentrate at a given trajectory $\mathbf{\hat{r}}\left( t\right) $ if
their charge densities $q\left\vert \psi \right\vert ^{2}\left( \mathbf{x}%
,t\right) $ restricted to $R_{n}$-neighborhoods $\Omega _{n}$ \ of $\mathbf{%
\hat{r}}\left( t\right) $ locally converge to $q_{\infty }\left( t\right)
\delta \left( \mathbf{x}-\mathbf{\hat{r}}\left( t\right) \right) $ as 
\begin{equation}
a_{n}\rightarrow 0,\qquad R_{n}\rightarrow 0,\qquad a_{n}/R_{n}\rightarrow
0,\qquad n\rightarrow \infty .  \label{arar}
\end{equation}%
Now we describe the concept of \emph{concentrating} solutions in a little
more detail. There are two relevant spatial scales for the NLS: the
microscopic size parameter $a$ which determines the size of a free charge
and the macroscopic length scale $R_{\mathrm{ex}}$ \ of order 1 at which the
potentials $\varphi $ and $\mathbf{A}$ vary significantly. We introduce the
third intermediate spatial scale $R_{n}$ which can be called the \emph{%
confinement scale} and we assume that $\psi _{n}$ asymptotically vanish at
the boundary $\partial \Omega _{n}=\left\{ \left\vert \mathbf{x}-\mathbf{%
\hat{r}}\left( t\right) \right\vert =R_{n}\right\} $ of the neighborhood $%
\Omega _{n}$. \ According to (\ref{arar}) $R_{n}/a\rightarrow \infty $, \ \
therefore \ this assumption is quite natural for solutions with typical
spatial scale $a$ \ which are localized at $\mathbf{\hat{r}}$. \ At the same
time, $R_{n}/R_{\mathrm{ex}}\rightarrow 0$, \ \ therefore general potentials 
$\varphi \left( t,\mathbf{x}\right) $, $\mathbf{A}\left( t,\mathbf{x}\right) 
$ \ can be replaced successfully in $\Omega _{n}$\ by their linearizations
at $\mathbf{x}=\mathbf{\hat{r}}\left( t\right) $. The exact definition of
the concentrating solutions (or concentrating asymptotic solutions) involves
two major assumptions:

\begin{enumerate}
\item[(1)] volume integrals over the balls $\Omega _{n}$ of the charge
density $q\left\vert \psi \right\vert ^{2}$ and the momentum density $%
\mathbf{P}$ should be bounded;

\item[(2)] surface integrals over the spheres $\partial \Omega _{n}$ of
certain quadratic expressions which involve $\psi $ and its first order
derivatives and originate from the elements of the energy-momentum tensor of
the NLS equation tend to zero as $R_{n}\rightarrow 0$, $a_{n}/R_{n}%
\rightarrow 0$. Details of these assumptions can be found in Definitions \ref%
{Dlocconverges} and \ref{Dconcas}.
\end{enumerate}

A concise formulation of our main result, Theorem \ref{Cq0sa}, is as
follows. We prove that \emph{if a sequence of asymptotic solutions of the
NLS\ equation (\ref{NLS}) concentrates at a trajectory }$\mathbf{\hat{r}}%
\left( t\right) $\emph{\ then this trajectory must satisfy the Newton
equation} 
\begin{equation}
m\partial _{t}^{2}\mathbf{\hat{r}}=\mathbf{f}_{\mathrm{Lor}}\left( t,\mathbf{%
\hat{r}}\right) ,  \label{Newt0}
\end{equation}%
where $\mathbf{f}_{\mathrm{Lor}}$ is the Lorentz force which is defined by
the classical formula 
\begin{equation}
\mathbf{f}_{\mathrm{Lor}}\left( t,\mathbf{\hat{r}}\right) =q\mathbf{E}\left(
t,\mathbf{\hat{r}}\right) +\frac{q}{\mathrm{c}}\partial _{t}\mathbf{\hat{r}}%
\times \mathbf{B}\left( t,\mathbf{\hat{r}}\right) ,  \label{fLor0}
\end{equation}%
with the electric and magnetic fields $\mathbf{E}$ and $\mathbf{B}$ defined
in terms of the potentials $\ \varphi ,\mathbf{A}$ in (\ref{covNLS})
according to the standard formula%
\begin{equation}
\mathbf{E}=-\nabla \varphi -\frac{1}{\mathrm{c}}\partial _{t}\mathbf{A}%
,\qquad \mathbf{B}=\nabla \times \mathbf{A.}  \label{maxw3a}
\end{equation}%
Notice that that according to Newton's equation (\ref{Newt0}) the parameter $%
m$ entering the NLS equation (\ref{NLS}) can be interpreted as the mass of
the charge.

Now we would like to comment on certain subjects related to our main Theorem %
\ref{Cq0sa}. First of all, the assumptions imposed on the concentrating
solutions are not restrictive. Indeed, the solutions are local and have only
to be defined in a tubular neighborhood of the trajectory and no initial or
boundary conditions are imposed on them. Second of all, only the charge and
the momentum conservation laws are used in the derivation of the theorem.
Since only conservation laws are used for the argument, the asymptotic
solutions for the NLS equation\ are introduced as functions $\psi $\ for
which the charge and the momentum conservation laws hold approximately.
Namely, certain integrals which involve quantities that enter the
conservation laws vanish in an asymptotic limit, see Definition \ref{Dconcas}
for details. Remarkably, such mild restrictions still completely determine
all possible trajectories of concentration, that is the trajectory $\mathbf{%
\hat{r}}\left( t\right) $ is uniquely defined by the initial data $\mathbf{%
\hat{r}}\left( t_{0}\right) $ and $\partial _{t}\mathbf{\hat{r}}\left(
t_{0}\right) $ as a solution of (\ref{Newt0}).

Equation (\ref{Newt0}) yields a \emph{necessary condition} for solutions of
the NLS\ equation to concentrate at a trajectory. To obtain this necessary
condition we have to derive the point dynamics governed by an ordinary
differential equation (\ref{Newt0})\ from the dynamics of waves governed by
a partial differential equation. The concept of "\emph{concentration of
functions at a given trajectory} $\mathbf{\hat{r}}\left( t\right) $", see
Definition \ref{Dlocconverges}, is the first step in relating spatially
localized fields $\psi $ to point trajectories. The definition of
concentration of functions has a sufficient flexibility to allow for general
regular trajectories $\mathbf{\hat{r}}\left( t\right) \ $and plenty of
functions are localized about the trajectory. But if a sequence of functions
concentrating at a given trajectory\emph{\ also satisfy or asymptotically
satisfy the conservation laws for the NLS equation then, according to
Theorems \emph{\ref{Cq0s} and \ref{Cq0sa}}, the trajectory and the limit
energy must satisfy Newton's equation}.

To derive Newton's equation (\ref{Newt0}), we consider $\psi \left( \mathbf{x%
},t\right) $ restricted to a narrow tubular neighborhood of the trajectory
of radius $R_{n}$ and consider adjacent charge centers 
\begin{equation}
\mathbf{r}_{n}\left( t\right) =\frac{1}{\bar{\rho}_{n}}\int_{\left\vert 
\mathbf{x-\hat{r}}\right\vert \leq R_{n}}\mathbf{x}q\left\vert \psi
_{n}\right\vert ^{2}\mathrm{d}\mathbf{x},\qquad \bar{\rho}%
_{n}=\int_{\left\vert \mathbf{x-\hat{r}}\right\vert \leq R_{n}}\,q\left\vert
\psi _{n}\right\vert ^{2}\mathrm{d}\mathbf{x},
\end{equation}%
of the concentrating solutions. Then we infer integral equations for the
restricted momentum and for the adjacent centers from the momentum
conservation law and the continuity equation, and pass to the limit as $%
R_{n}\rightarrow 0$, $a_{n}\rightarrow 0$ as is shown in the proofs of
Theorems \ref{Tconctr}-\ref{Cq0s}.\ Note that the determination of the
restricted charge density $\bar{\rho}$ and a similar momentum density
involves integration over a large relative to $a$ spatial domain of radius $%
R_{n}$, $R_{n}>>a$. Therefore it is natural to call our method of
determination of the point trajectories \emph{semi-local}. This semi-local
feature applied to the nonlinear Klein-Gordon equation in \cite{BF8}, \cite%
{BF9} allowed us to derive Einstein's relation between mass and energy with
the energy being an integral quantity.

If the form factor $\mathring{\psi}_{1}\left( \theta \right) $ decays fast
enough as $\theta \rightarrow \infty $ (faster than $\theta ^{-2}$), we also
prove the converse statement (Theorem \ref{Ttrconv}) to Theorem \ref{Cq0sa}.
Namely, \emph{if }$\hat{r}\left( t\right) $\emph{\ is a solution of equation
(\ref{Newt0}) then it is a trajectory of concentration for a sequence of
asymptotic solutions to the NLS. }The proof of this theorem is based on an
explicit construction of \emph{wave-corpuscle solutions} of the NLS of the
form $\psi \left( t,\mathbf{x}\right) =\mathrm{e}^{\mathrm{i}S}\mathring{\psi%
}\left( \mathbf{x-\hat{r}}\right) $ for a general enough class of potentials 
$\varphi $ and $\mathbf{A}$ with a certain phase function $S$. We choose
auxiliary potentials $\varphi _{\mathrm{aux}}$ and $\mathbf{A}_{\mathrm{aux}%
} $ from this class to approximate $\varphi $ and $\mathbf{A}$ at $\mathbf{%
\hat{r}}\left( t\right) $. The wave-corpuscle solutions constructed for the
auxiliary potentials $\varphi _{\mathrm{aux}}$ and $\mathbf{A}_{\mathrm{aux}%
} $ \ turn out to form a sequence of concentrating asymptotic solutions of
the NLS. Therefore, for given\ twice continuously differentiable potentials $%
\varphi $ and $\mathbf{A}$, a trajectory is a trajectory of concentration of
asymptotic solutions of the NLS if and only if this trajectory satisfies
Newton's law (\ref{Newt0}). The phase function $S$ of the wave-corpuscle can
be interpreted as the phase of de Broglie wave (see \cite{BF5} for details).
Hence, the described relation between Newtonian trajectories and
concentrating asymptotic solutions can be interpreted as what is known as
the wave-particle duality.

The paper's structure \ is as follows. In the following Subsection \ref%
{Sconclaw} we define the charge density, current density and momentum
density for a solution of NLS and write down corresponding conservation
laws. The customary field theory derivation of the conservation laws is
given in Appendix 1. In Section \ref{SNLSpoint} we describe the class of
trajectories and in Subsection \ref{SlocNLS} \ we formulate restrictions on
the fields $\varphi $ and $\mathbf{A}$ and give the definition of solutions
concentrating to a trajectory (Definition \ref{Dlocconverges}). In
Subsections \ref{Sprop} and \ref{Sder} \ we prove our main result on the
characterization of trajectories of concentration of solutions to the NLS
equation (Theorem \ref{Cq0s}).

In Section \ref{swcpres} we consider "wave-corpuscles" that exactly preserve
their shape in an accelerated motion and describe a class of potentials $%
\varphi $,$\mathbf{A}$ \ which allow for such a motion. In Subsection \ref%
{Swcconc} we prove that the wave corpuscles provide an example of
concentrating solutions to the NLS equation (Theorem \ref{Twcconc}). In
Section \ref{Sconcas} we provide an extension of results of Section \ref%
{SNLSpoint} to asymptotic solutions of the NLS equation, including the main
Theorem \ref{Cq0sa}. In Section \ref{Spoint} \ we prove that if a trajectory
satisfies Newton's law then it is a trajectory of concentration of
asymptotic solutions of the NLS equation.

\subsection{Conservation laws for NLS\label{Sconclaw}}

The \emph{charge density} $\rho $ and \emph{current density} $\mathbf{J}$
are defined by the formulas 
\begin{gather}
\rho =q\psi \psi ^{\ast },  \label{schr8} \\
\mathbf{J}=-\mathrm{i}\frac{q\chi }{2m}\left( \psi ^{\ast }\tilde{\nabla}%
\psi -\psi \tilde{\nabla}^{\ast }\psi ^{\ast }\right) ,  \label{schr8a}
\end{gather}%
and for solutions of (\ref{NLS}) they satisfy the \emph{continuity equation}%
: 
\begin{equation}
\partial _{t}\rho +\nabla \cdot \mathbf{J}=0.  \label{schr9}
\end{equation}%
One can verify the validity of the continuity equation (\ref{schr9})
directly by simply multiplying (\ref{NLS}) by $\psi ^{\ast }$\ and taking
the imaginary part. An alternative derivation based on the Lagrangian field
formalism is provided in Section \ref{sschrodinger}. The \emph{momentum
density} is defined by the formula%
\begin{equation}
\mathbf{P}=\mathrm{i}\frac{\chi }{2}\left[ \psi \cdot \tilde{\nabla}^{\ast
}\psi ^{\ast }-\psi ^{\ast }\cdot \tilde{\nabla}\psi \right] ,  \label{schr5}
\end{equation}%
and it is proportional to the current density, namely%
\begin{equation}
\mathbf{P}=\frac{m}{q}\mathbf{J.}  \label{PmqJ}
\end{equation}%
The momentum density satisfies the \emph{momentum equation} \ of the form 
\begin{equation}
\partial _{t}\mathbf{P}+\partial _{i}T^{ij}=\mathbf{f},  \label{momNLS}
\end{equation}%
where $\mathbf{f}$ is the Lorentz force density defined by the formula 
\begin{equation}
\mathbf{f}=\rho \mathbf{E}+\frac{1}{\mathrm{c}}\mathbf{J}\times \mathbf{B,}
\label{flors}
\end{equation}%
$T^{ij}$ are the entries of the energy-momentum tensor defined by (\ref%
{emfr10}), (\ref{Tij}), and the EM fields $\mathbf{E}$, $\mathbf{B}$ are
defined in terms of the potentials $\ \varphi $, $\mathbf{A}$ by the
standard formulas (\ref{maxw3a}). The momentum equation can be derived from
the NLS equation using multiplication by $\tilde{\nabla}^{\ast }\psi ^{\ast
} $ \ and some rather lengthy vector algebra manipulations. The derivation
of the momentum equation based on the standard field theory formalism is
given in Section \ref{sschrodinger}.

\section{Derivation of non-relativistic point dynamics for localized
solutions of NLS equation\label{SNLSpoint}}

In this section we consider solutions of NLS equation (\ref{NLS}) that are
localized around a trajectory in the three dimensional space $\mathbb{R}^{3}$
and find the necessary condition for such a trajectory which coincides with
Newton's law of motion.

\subsection{Trajectories and their neighborhoods}

The first step in introducing the concept of concentration to a trajectory
is to describe the class of trajectories.

\begin{definition}[trajectory]
\label{Dtrconc} A trajectory $\mathbf{\hat{r}}\left( t\right) $, $T_{-}\leq
t\leq T_{+}$, is a twice continuously differentiable function with values in 
$\mathbb{R}^{3}$ satisfying 
\begin{equation}
\left\vert \partial _{t}\mathbf{\hat{r}}\left( t\right) \right\vert \leq C,%
\text{\quad }\left\vert \partial _{t}^{2}\mathbf{\hat{r}}\left( t\right)
\right\vert \leq C\text{ \ for\ }T_{-}\leq t\leq T_{+}.  \label{trbound}
\end{equation}
\end{definition}

\ Being given a trajectory $\mathbf{\hat{r}}\left( t\right) $, we consider a
family of neighborhoods contracting to it. Namely, we introduce a ball of
radius $R$ centered at $\mathbf{x}=\mathbf{\hat{r}}\left( t\right) $:\ 
\begin{equation}
\Omega \left( \mathbf{\hat{r}}\left( t\right) ,R\right) =\left\{ \mathbf{x}%
:\left\vert \mathbf{x}-\mathbf{\hat{r}}\left( t\right) \right\vert ^{2}\leq
R^{2}\right\} \subset \mathbb{R}^{3},\text{\quad }R>0.  \label{Omball}
\end{equation}

\begin{definition}[concentrating neighborhoods]
\label{Dconcne}\emph{\ Concentrating neighborhoods} $\hat{\Omega}\left( 
\mathbf{\hat{r}}\left( t\right) ,R_{n}\right) \subset \left[ T_{-},T_{+}%
\right] \times \mathbb{R}^{3}$ of a trajectory $\mathbf{\hat{r}}\left(
t\right) $ are\ defined as a family of tubular domains 
\begin{equation}
\hat{\Omega}\left( \mathbf{\hat{r}}\left( t\right) ,R_{n}\right) =\left\{
\left( t,\mathbf{x}\right) :T_{-}\leq t\leq T_{+},\text{\quad }\left\vert 
\mathbf{x}-\mathbf{\hat{r}}\left( t\right) \right\vert ^{2}\leq
R_{n}^{2}\right\}  \label{Omhat}
\end{equation}%
where $R_{n}$ satisfy the contraction condition:%
\begin{equation}
R_{n}\rightarrow 0\text{\ as\ }n\rightarrow \infty .  \label{Rnto0}
\end{equation}%
The cross-section of the tubular domain at a fixed $t$ is given by (\ref%
{Omball}) and is denoted by $\Omega _{n}$:%
\begin{equation}
\Omega _{n}=\Omega _{n}\left( t\right) =\Omega \left( \mathbf{\hat{r}}%
(t),R_{n}\right) \subset \mathbb{R}^{3}.  \label{Omn}
\end{equation}
\end{definition}

\subsection{Localized NLS equations\label{SlocNLS}}

Let us consider the NLS equation (\ref{NLS}) in a neighborhood of the
trajectory $\mathbf{\hat{r}}(t)$. We remind that $m$ is the mass parameter, $%
q$ is the value of the charge, $\chi $ is a parameter similar to the Planck
constant, $\mathrm{c}$ is the speed of light all of which are fixed. The
size parameter $a$ and potentials $\varphi \left( t,\mathbf{x}\right) ,$ $%
\mathbf{A}\left( t,\mathbf{x}\right) $ form a sequence. We consider the NLS
in a shrinking vicinity of the trajectory and make certain regularity
assumptions on behavior of its coefficients. In the definitions below we use
the following notations:%
\begin{equation}
\partial _{0}=\mathrm{c}^{-1}\partial _{t},  \label{dt0}
\end{equation}%
\begin{gather}
\nabla _{\mathbf{x}}\varphi =\nabla \varphi =\left( \partial _{1}\varphi
,\partial _{2}\varphi ,\partial _{3}\varphi \right) ,\text{\quad }\left\vert
\nabla _{\mathbf{x}}\varphi \right\vert ^{2}=\left\vert \nabla \varphi
\right\vert ^{2}=\left\vert \partial _{1}\varphi \right\vert ^{2}+\left\vert
\partial _{2}\varphi \right\vert ^{2}+\left\vert \partial _{3}\varphi
\right\vert ^{2},  \label{grad0} \\
\nabla _{0,\mathbf{x}}\varphi =\left( \partial _{0}\varphi ,\partial
_{1}\varphi ,\partial _{2}\varphi ,\partial _{3}\varphi \right) ,\text{\quad 
}\left\vert \nabla _{0,\mathbf{x}}\varphi \right\vert ^{2}=\left\vert
\partial _{0}\varphi \right\vert ^{2}+\left\vert \partial _{1}\varphi
\right\vert ^{2}+\left\vert \partial _{2}\varphi \right\vert ^{2}+\left\vert
\partial _{3}\varphi \right\vert ^{2}.  \notag
\end{gather}%
Now we formulate the continuity assumtions we impose on the nonlinearity $%
G_{1}^{\prime }\ \ $in the NLS equation (\ref{NLS}).

\begin{condition}
\label{Dregnon}The real-valued function $G_{1}^{\prime }\left( s\right) $ is
continuous for $s>0$. It coincides with the derivative of the potential $%
G_{1}\left( s\right) $ which is \ differentiable for $s>0$ and continuous
for $s\geq 0$. We assume that \ the function $G_{1}\left( \psi ^{\ast }\psi
\right) \ $\ of the complex variable $\psi \in \mathbb{C}$ is \
differentiable for all $\psi $ and its differential\ has the form 
\begin{equation*}
dG_{1}\left( \psi ^{\ast }\psi \right) =g\left( \psi \right) d\psi ^{\ast
}+g^{\ast }\left( \psi \right) d\psi
\end{equation*}%
where 
\begin{equation*}
g\left( \psi \right) =\left\{ 
\begin{array}{c}
G_{1}^{\prime }\left( \psi ^{\ast }\psi \right) \psi \text{ \ for \ }\psi
\in \mathbb{C},\psi \neq 0, \\ 
0\text{ \ for \ }\psi =0%
\end{array}%
\right. ,
\end{equation*}
\ and $g\left( \psi \right) $ \ is continuous for \ $\psi \in \mathbb{C}$. \
\ 
\end{condition}

Note that the above condition allows a mild singularity of $G_{1}^{\prime }$
at zero, for example the logarithmic nonlinearity (\ref{Gaussg}) satisfies
this condition.

\begin{definition}[localized NLS equations]
\label{DNLSseq} Let $\mathbf{\hat{r}}(t)$ be a trajectory with its
concentrating neighborhoods $\hat{\Omega}\left( \mathbf{\hat{r}}%
,R_{n}\right) $, and let $a=a_{n}$, $\varphi =\varphi _{n},$\ $\mathbf{A}=%
\mathbf{A}_{n}$ \ be a sequence of parameters and potentials entering the
NLS equation (\ref{NLS}). We say that NLS equations are localized in $\hat{%
\Omega}\left( \mathbf{\hat{r}},R_{n}\right) $ if the following conditions
are satisfied. The parameters $R_{n}$ and $a_{n}$ as $n\rightarrow \infty $
become vanishingly small, that is\ 
\begin{equation}
a=a_{n}\rightarrow 0,\quad R_{n}\rightarrow 0  \label{anto0s}
\end{equation}%
and the ratio $\theta =R_{n}/a$\ grows to infinity: 
\begin{equation}
\theta _{n}=\frac{R_{n}}{a_{n}}\rightarrow \infty \ \text{as\ }n\rightarrow
\infty .  \label{theninfs}
\end{equation}%
We introduce at the trajectory$\ \mathbf{\hat{r}}(t)$ the limit potentials $%
\varphi _{\infty }\left( t,\mathbf{x}\right) $\ $\mathbf{A}_{\infty }\left(
t,\mathbf{x}\right) $ \ \ which are linear \ in $\mathbf{x}$ and are written
in the form%
\begin{equation}
\varphi _{\infty }\left( t,\mathbf{x}\right) =\varphi _{\infty }\left(
t\right) +\left( \mathbf{x}-\mathbf{\hat{r}}\right) \cdot \nabla \varphi
_{\infty }\left( t\right) ,  \label{fiinfs}
\end{equation}%
\ 
\begin{equation}
\mathbf{A}_{\infty }\left( t,\mathbf{x}\right) =\mathbf{A}_{\infty }\left(
t\right) +\left( \mathbf{x}-\mathbf{\hat{r}}\right) \cdot \nabla \mathbf{A}%
_{\infty }\left( t\right) ,  \label{ainfs}
\end{equation}%
where the coefficients $\varphi _{\infty },\nabla \varphi _{\infty },\mathbf{%
A}_{\infty },\nabla \mathbf{A}_{\infty }$ satisfy the following boundedness
conditions: 
\begin{equation}
\left\vert \varphi _{\infty }\right\vert \leq C,\text{\ }\left\vert \nabla
_{0,\mathbf{x}}\varphi _{\infty }\right\vert \leq C\text{ for}\;T_{-}\leq
t\leq T_{+},  \label{fihatbs}
\end{equation}%
\begin{equation}
\left\vert \mathbf{A}_{\infty }\right\vert \leq C,\text{\ }\left\vert \nabla
_{0,\mathbf{x}}\mathbf{A}_{\infty }\right\vert \leq C\text{ for}\;T_{-}\leq
t\leq T_{+}.  \label{ahatbs}
\end{equation}%
\ We suppose the EM\ potentials $\varphi _{n}\left( t,\mathbf{x}\right) ,%
\mathbf{A}_{n}\left( t,\mathbf{x}\right) $ to be twice continuously
differentiable in $\hat{\Omega}\left( \mathbf{\hat{r}},R_{n}\right) $. The
potentials $\varphi _{n}\left( t,\mathbf{x}\right) ,\mathbf{A}_{n}\left( t,%
\mathbf{x}\right) $ locally converge to the limit linear potentials $\varphi
_{\infty }\left( t,\mathbf{x}\right) $, $\ \mathbf{A}_{\infty }\left( t,%
\mathbf{x}\right) $, namely they satisfy the following relations:

\begin{enumerate}
\item[(i)] convergence\ as\ $n\rightarrow \infty $: 
\begin{equation}
\max_{T_{-}\leq t\leq T_{+},x\in \Omega _{n}}(|\varphi _{n}\left( t,\mathbf{x%
}\right) -\varphi _{\infty }\left( t,\mathbf{x}\right) \mathbf{|}+|\nabla
_{0,x}\varphi _{n}\left( t,\mathbf{x}\right) -\nabla _{0,x}\varphi _{\infty
}\left( t,\mathbf{x}\right) \mathbf{\mathbf{\mathbf{|}})}\rightarrow 0,
\label{Alocr0}
\end{equation}%
\ 
\begin{equation}
\max_{T_{-}\leq t\leq T_{+},x\in \Omega _{n}}(|\mathbf{A}_{n}\left( t,%
\mathbf{x}\right) -\mathbf{A}_{\infty }\left( t,\mathbf{x}\right) \mathbf{|}%
+|\nabla _{0,x}\mathbf{A}_{n}\left( t,\mathbf{x}\right) -\nabla _{0,x}%
\mathbf{A}_{\infty }\left( t,\mathbf{x}\right) \mathbf{\mathbf{\mathbf{|}})}%
\rightarrow 0;  \label{Alocr2}
\end{equation}

\item[(ii)] uniform in $n$ estimates: 
\begin{equation}
|\varphi _{n}\left( t,\mathbf{x}\right) \mathbf{|}\leq C\text{, \ \ }\mathbf{%
|}\nabla _{0,\mathbf{x}}\varphi _{n}\left( t,\mathbf{x}\right) \mathbf{%
\mathbf{|}}\leq C\text{ \ \ \ for }\;\left( t,\mathbf{x}\right) \in \hat{%
\Omega}\left( \mathbf{\hat{r}},R_{n}\right) ,  \label{Abounds0}
\end{equation}%
\begin{equation}
|\mathbf{A}_{n}\left( t,\mathbf{x}\right) \mathbf{|}\leq C\text{, \ }\mathbf{%
|}\nabla _{0,\mathbf{x}}\mathbf{A}_{n}\left( t,\mathbf{x}\right) \mathbf{%
\mathbf{|}}\leq C\text{ \ \ \ for }\;\left( t,\mathbf{x}\right) \in \hat{%
\Omega}\left( \mathbf{\hat{r}},R_{n}\right) .  \label{Abounds}
\end{equation}%
The limit EM fields $\mathbf{E}_{\infty }$,$\mathbf{B}_{\infty }$ at the
trajectory are defined in terms of the linear potentials (\ref{fiinfs}), (%
\ref{ainfs}) by (\ref{maxw3a}), namely 
\begin{equation}
\mathbf{E}_{\infty }=-\nabla \varphi _{\infty }\left( t,\mathbf{\hat{r}}%
\right) -\frac{1}{\mathrm{c}}\partial _{t}\mathbf{A}_{\infty }\left( t,%
\mathbf{\hat{r}}\right) ,\qquad \mathbf{B}_{\infty }=\nabla \times \mathbf{A}%
_{\infty }\left( t,\mathbf{\hat{r}}\right) .  \label{ebinf}
\end{equation}
\end{enumerate}
\end{definition}

Note that according to (\ref{Alocr0}), (\ref{Alocr2}) 
\begin{equation}
\mathbf{E}=\mathbf{E}_{n}\rightarrow \mathbf{E}_{\infty },\qquad \mathbf{B}=%
\mathbf{B}_{n}\rightarrow \mathbf{B}_{\infty }  \label{ebinf0a}
\end{equation}%
in $\hat{\Omega}\left( \mathbf{\hat{r}}(t),R_{n}\right) $.

Throughout this paper we denote constants which do not depend on $n$ by the
letter $C\ $with\ different\ indices. Sometimes the same letter $C$ with the
same indices may denote in different formulas different constants. Below we
often omit index $n$ in $a_{n}$, $\varphi _{n}$\ etc.

The most important case where we apply the above definition is described in
the following example.

\begin{example}
\label{EconcNLS}If the potentials $\varphi _{n},\mathbf{A}_{n}$\ are the
restrictions of fixed twice continuously differentiable potentials $\varphi ,%
\mathbf{A}$ to the domain $\hat{\Omega}\left( \mathbf{\hat{r}}%
(t),R_{n}\right) $, then $\varphi _{\infty ,}\mathbf{A}_{n}$ \ is the linear
part of $\varphi ,\mathbf{A}$ at \ $\mathbf{\hat{r}}$ and \ conditions (\ref%
{fihatbs})-(\ref{Abounds}) are satisfied with 
\begin{eqnarray}
\varphi _{\infty }\left( t,\mathbf{x}\right) &=&\varphi \left( t,\mathbf{%
\hat{r}}\right) +\left( \mathbf{x}-\mathbf{\hat{r}}\right) \cdot \nabla
\varphi \left( t,\mathbf{\hat{r}}\right) ,  \label{fiarestr} \\
\mathbf{A}_{\infty }\left( t,\mathbf{x}\right) &=&\mathbf{A}\left( t,\mathbf{%
\hat{r}}\right) +\left( \mathbf{x}-\mathbf{\hat{r}}\right) \cdot \nabla 
\mathbf{A}\left( t,\mathbf{\hat{r}}\right) ,  \notag
\end{eqnarray}%
that is the coefficients in (\ref{fiinfs}), (\ref{ainfs}) are defined as
follows: 
\begin{equation}
\varphi _{\infty }\left( t\right) =\varphi \left( t,\mathbf{\hat{r}}\left(
t\right) \right) ,\qquad \nabla \mathbf{A}_{\infty }\left( t\right) =\nabla 
\mathbf{A}\left( t,\mathbf{\hat{r}}\right) ,  \label{fiinf0}
\end{equation}%
and the EM fields $\mathbf{E}_{\infty },\mathbf{B}_{\infty }$ in (\ref{ebinf}%
) are directly expressed in terms of $\varphi ,\mathbf{A}$ by (\ref{maxw3a}%
), namely 
\begin{equation}
\mathbf{E}_{\infty }=-\nabla \varphi \left( t,\mathbf{\hat{r}}\right) -\frac{%
1}{\mathrm{c}}\partial _{t}\mathbf{A}\left( t,\mathbf{\hat{r}}\right)
,\qquad \mathbf{B}_{\infty }=\nabla \times \mathbf{A}\left( t,\mathbf{\hat{r}%
}\right) .  \label{ebinf0}
\end{equation}
\end{example}

We introduce the local value of the charge restricted to domain $\Omega
\left( \mathbf{\hat{r}}(t),R_{n}\right) $ by the formula 
\begin{equation}
\bar{\rho}_{n}\left( t\right) =\int_{\Omega \left( \mathbf{\hat{r}}%
(t),R_{n}\right) }\rho _{n}\,\mathrm{d}\mathbf{x}=\int_{\Omega \left( 
\mathbf{\hat{r}}(t),R_{n}\right) }q\left\vert \psi _{n}\right\vert ^{2}\,%
\mathrm{d}\mathbf{x},  \label{Endef0s}
\end{equation}%
with $\rho _{n}\left( t,\mathbf{x}\right) $ being the charge density defined
by (\ref{schr8}), and we call $\bar{\rho}_{n}$ adjacent charge value.

\begin{definition}[concentrating solutions]
\label{Dlocconverges}Let $\mathbf{\hat{r}}(t)$ be a trajectory. We say that
solutions $\psi $ to\ the NLS equations concentrate\ at\ the trajectory $%
\mathbf{\hat{r}}(t)$ if Condition \ref{Dregnon} is satisfied and the
following conditions are fulfilled. First, a sequence of concentrating
neighborhoods $\hat{\Omega}\left( \mathbf{\hat{r}},R_{n}\right) $,
parameters $a=a_{n}\ $\ and potentials$\ \varphi =\varphi _{n},\ \mathbf{A=}%
\ \mathbf{A}_{n}$ are selected as in Definition \ref{DNLSseq}. Second, there
exists a sequence of functions $\psi =\psi _{n}\ $which\ are twice
continuously differentiable,\ that is $\psi _{n}\in C^{2}\left( \hat{\Omega}%
\left( \mathbf{\hat{r}},R_{n}\right) \right) ,$ and such that the charge
density $\rho =q\left\vert \psi _{n}\right\vert ^{2}$ and the momentum
density $\mathbf{P}$ defined by (\ref{schr5}) for this sequence satisfy the
following conditions:

\begin{enumerate}
\item[(i)] every function $\psi _{n}$ is a solution to the NLS equation (\ref%
{NLS}) in $\hat{\Omega}\left( \mathbf{\hat{r}},R_{n}\right) $;

\item[(ii)] the momentum density $\mathbf{P}$ \ defined by (\ref{schr5}) for
this sequence is such that the following integrals are bounded:%
\begin{equation}
\left\vert \int_{\Omega _{n}}\mathbf{P}_{n}\left( t\right) \,\mathrm{d}%
\mathbf{x}\right\vert \leq C;  \label{locpsibounds}
\end{equation}

\item[(iii)] the local charge value defined by (\ref{Endef0s}) is bounded
from above and below\ for sufficiently large $n$: 
\begin{equation}
C\geq \bar{\rho}_{n}\left( t\right) \geq c_{0}>0\text{ for}\;n\geq
n_{0},\;T_{-}\leq t\leq T_{+};  \label{Egrcs}
\end{equation}

\item[(iv)] there exists $t_{0}\in \left[ T_{-},T_{+}\right] $ such that the
sequence of local charge values\ converges: 
\begin{equation}
\lim_{n\rightarrow \infty }\bar{\rho}_{n}\left( t_{0}\right) =\bar{\rho}%
_{\infty }.  \label{rconv}
\end{equation}

\item[(v)] We also impose conditions on the following surface integrals over
the spheres $\partial \Omega _{n}=\left\{ \left\vert \mathbf{x}-\mathbf{\hat{%
r}}\right\vert =R_{n}\right\} $: 
\begin{equation}
Q_{0}=\int_{t_{0}}^{t}\int_{\partial \Omega _{n}}\mathbf{\bar{n}}_{i}T^{ij}\,%
\mathrm{d}\sigma \mathrm{d}t^{\prime },  \label{QP0}
\end{equation}%
where $T^{ij}$ are given by (\ref{emfr10}), (\ref{Tij}) and we make
summation over repeating indices;\ 
\begin{equation}
Q_{01}=\int_{t_{0}}^{t}\int_{\partial \Omega _{n}}\mathbf{P\hat{v}}\cdot 
\mathbf{\bar{n}}\mathrm{d}\sigma \mathrm{d}t^{\prime },  \label{Q01a}
\end{equation}%
where $\mathbf{P}$ is defined by (\ref{schr5});%
\begin{equation}
Q_{20}=\mathbf{-}\int_{\partial \Omega _{n}}\left( \mathbf{x}-\mathbf{r}%
\right) \mathbf{\hat{v}}\cdot \mathbf{\bar{n}}\rho \,\mathrm{d}\sigma ,
\label{Q20}
\end{equation}%
where $\rho $ is defined by (\ref{schr8});%
\begin{equation}
Q_{22}=\int_{\partial \Omega _{n}}\left( \mathbf{x-r}\right) \mathbf{n}\cdot 
\mathbf{J}\mathrm{d}\mathbf{x},  \label{Q22}
\end{equation}%
where $\mathbf{J}$ is given by (\ref{schr8a}); and 
\begin{equation}
Q_{23}=\int_{t_{0}}^{t}\int_{\partial \Omega _{n}}\mathbf{\hat{v}\cdot n}%
\rho \mathrm{d}\mathbf{x}\mathrm{d}t^{\prime
}-\int_{t_{0}}^{t}\int_{\partial \Omega _{n}}\mathbf{n}\cdot \mathbf{J}%
\mathrm{d}\mathbf{x}\mathrm{d}t^{\prime }.  \label{Q23}
\end{equation}%
The integrals defined above are assumed to satisfy the following limit
relations uniformly on the time interval $\left[ T_{-},T_{+}\right] $:%
\begin{equation}
Q_{0}\rightarrow 0,  \label{Q0Tto0}
\end{equation}%
\begin{equation}
Q_{01}\rightarrow 0,  \label{Q01Pto0}
\end{equation}%
\begin{equation}
Q_{20}\rightarrow 0,  \label{Q20to0}
\end{equation}%
\begin{equation}
Q_{22}\rightarrow 0\mathbf{,}  \label{Q22to0}
\end{equation}%
\begin{equation}
Q_{23}\rightarrow 0.  \label{Q23to0}
\end{equation}

\item[(vi)] We introduce the following integrals over $\Omega _{n}$ \ with
vanishing at $\mathbf{\hat{r}}$ weights \ 
\begin{equation}
Q_{30}=\int_{\Omega _{n}}\left( \mathbf{E}-\mathbf{E}_{\infty }\right) \rho 
\mathrm{d}\mathbf{x},  \label{Q30}
\end{equation}%
\begin{equation}
Q_{31}=\int_{\Omega _{n}}\frac{1}{\mathrm{c}}\mathbf{J}\times \left( \mathbf{%
B}-\mathbf{B}_{\infty }\right) \mathrm{d}\mathbf{x},  \label{Q31}
\end{equation}%
and assume that 
\begin{equation}
\int_{t_{0}}^{t}\left( Q_{30}+Q_{31}\right) \mathrm{d}t^{\prime }\rightarrow
0  \label{Q3to0}
\end{equation}%
uniformly on the time interval $\left[ T_{-},T_{+}\right] $. If all the
above conditions are fulfilled, we call $\mathbf{\hat{r}}(t)$ a
concentration trajectory \ of the NLS equation (\ref{NLS}).
\end{enumerate}
\end{definition}

Obviously conditions (ii)-(iv) and (vi) provide boundedness and convergence
of certain volume integrals over $\Omega _{n}$, and condition (v) provides
asymptotical vanishing of surface integrals over the boundary. Notice that
condition (ii) provides for the boundedness of the momentum over domain $%
\Omega _{n}$, condition, (v) provides for a proper confinement of $\psi $ to 
$\Omega _{n}$ and estimate from below in condition (iii) ensures that\ the
sequence is non-trivial. According to (\ref{Egrcs}), $\bar{\rho}_{n}\left(
t_{0}\right) $ is a bounded sequence, consequently it always contains a
converging subsequence. Hence condition (iv) is not really an additional
constraint but rather it assumes that such a subsequence is selected. The
choice of a particular subsequence limit $\ \bar{\rho}_{\infty }$ is
discussed in Remark \ref{Runiqs}. This condition describes the amount of
charge which concentrates at the trajectory at the time $t_{0}$. \ Condition
(i) can be relaxed \ and replaced by the assumption that $\psi _{n}$ is an
asymptotic solution, see Definition \ref{Dconcas} \ for details. We could
also allow parameters $\chi ,m,q$ to form sequences and depend on $n$, but
for simplicity in this paper we assume them fixed.

\emph{The wave-corpuscles constructed in Section \ref{swcpres} provide a
non-trivial example of solutions\ which concentrate at trajectories of
accelerating charges}.

\subsection{Properties of concentrating solutions of NLS\label{Sprop}}

We define the adjacent charge center $\mathbf{r}_{n}$ by the formula 
\begin{equation}
\mathbf{r}_{n}\left( t\right) =\frac{1}{\bar{\rho}_{n}}\int_{\Omega \left( 
\mathbf{\hat{r}}(t),R_{n}\right) }\mathbf{x}\rho _{n}\,\mathrm{d}\mathbf{x}.
\label{rnoms}
\end{equation}%
Since $\psi _{a}\left( t,\mathbf{x}\right) $ is a function of class $C^{1}$
with respect to $\left( t,\mathbf{x}\right) $, and $\mathbf{\hat{r}}(t)$ is
differentiable, the vector $\mathbf{r}\left( t\right) $ is a differentiable
function of time, and we denote by $\mathbf{v}$\ the velocity of the
adjacent charge center: 
\begin{equation}
\mathbf{v}=\mathbf{v}_{n}\left( t\right) =\partial _{t}\mathbf{r.}
\end{equation}%
Below we often make use of the following elementary identity:%
\begin{equation}
\int_{\Omega _{n}}\partial _{t}f\left( t,\mathbf{x}\right) \,\mathrm{d}%
\mathbf{x}=\partial _{t}\int_{\Omega _{n}}f\left( t,\mathbf{x}\right) \,%
\mathrm{d}\mathbf{x}-\int_{\partial \Omega _{n}}f\left( t,\mathbf{x}\right) 
\mathbf{\hat{v}}\cdot \mathbf{\bar{n}}\,\mathrm{d}\sigma ,  \label{eldt}
\end{equation}%
where $\mathbf{\bar{n}}$ is the external normal\ to\ $\partial \Omega _{n}$, 
$\mathbf{\hat{v}}=\partial _{t}\mathbf{\hat{r}}$.\ 

\begin{lemma}
\label{Lcentrcs}Let charge densities $\rho _{n}$ satisfy (\ref{Egrcs}). Then
the adjacent ergocenters $\mathbf{r}_{n}\left( t\right) $ of the solutions
converge to\ $\mathbf{\hat{r}}(t)$\ uniformly on the time interval $\left[
T_{-},T_{+}\right] $.
\end{lemma}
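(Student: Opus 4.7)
The plan is to estimate $\mathbf{r}_n(t) - \hat{\mathbf{r}}(t)$ directly as an integral over $\Omega_n$ and exploit the fact that the integration domain has radius $R_n\to 0$. The hypothesis (\ref{Egrcs}) enters only to guarantee that $\bar{\rho}_n(t)>0$, so that the normalization in the definition (\ref{rnoms}) makes sense, and that the normalizing constant is uniformly positive in $n$ and $t$.

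The key observation is that because $\hat{\mathbf{r}}(t)$ does not depend on the integration variable $\mathbf{x}$, one may write
\begin{equation*}
\mathbf{r}_n(t) - \hat{\mathbf{r}}(t) \;=\; \frac{1}{\bar{\rho}_n(t)}\int_{\Omega\left(\hat{\mathbf{r}}(t),R_n\right)} \bigl(\mathbf{x}-\hat{\mathbf{r}}(t)\bigr)\,\rho_n(t,\mathbf{x})\,\mathrm{d}\mathbf{x},
\end{equation*}
since $\bar{\rho}_n(t)^{-1}\int_{\Omega_n}\hat{\mathbf{r}}(t)\rho_n\,\mathrm{d}\mathbf{x}=\hat{\mathbf{r}}(t)$ by the very definition of $\bar{\rho}_n$ in (\ref{Endef0s}). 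I would then take absolute values, bring them inside the integral (noting $\rho_n=q|\psi_n|^2\geq 0$), and use the pointwise bound $|\mathbf{x}-\hat{\mathbf{r}}(t)|\leq R_n$ valid on $\Omega_n=\Omega(\hat{\mathbf{r}}(t),R_n)$. This yields
\begin{equation*}
\bigl|\mathbf{r}_n(t)-\hat{\mathbf{r}}(t)\bigr| \;\leq\; \frac{R_n}{\bar{\rho}_n(t)}\int_{\Omega_n}\rho_n(t,\mathbf{x})\,\mathrm{d}\mathbf{x} \;=\; R_n,
\end{equation*}
with the last equality following again from the definition of $\bar{\rho}_n$.

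Since the final bound $R_n$ is independent of $t$ and $R_n\to 0$ by the contraction condition (\ref{Rnto0}), the uniform convergence on $[T_-,T_+]$ follows immediately. There is no genuine obstacle here; the statement is purely a reflection of the fact that the first moment of a probability measure supported in a ball of radius $R_n$ about $\hat{\mathbf{r}}$ must lie within $R_n$ of $\hat{\mathbf{r}}$. The positivity $\bar{\rho}_n\geq c_0>0$ is needed only to ensure the quantity $\mathbf{r}_n(t)$ is well-defined; the upper bound in (\ref{Egrcs}) is not used in this lemma.
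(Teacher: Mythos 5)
Your proof is correct and follows essentially the same idea as the paper's: the centroid of the nonnegative measure $\rho_n\,\mathrm{d}\mathbf{x}$ restricted to a ball of radius $R_n$ about $\hat{\mathbf{r}}(t)$ must lie within $R_n$ of $\hat{\mathbf{r}}(t)$. Your version is in fact slightly cleaner, since normalizing by $\bar{\rho}_n$ inside the estimate yields the explicit uniform bound $\left\vert \mathbf{r}_{n}(t)-\mathbf{\hat{r}}(t)\right\vert \leq R_{n}$ and, as you note, dispenses with the upper bound in (\ref{Egrcs}), whereas the paper reaches the same conclusion by writing $\left( \mathbf{r}-\mathbf{\hat{r}}\right) \bar{\rho}_{n}$ as a difference of two first-moment integrals and then invoking both bounds in (\ref{Egrcs}).
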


\begin{proof}
By (\ref{rnoms}) 
\begin{equation}
\int_{\Omega _{n}}\left( \mathbf{x-r}\right) \rho _{n}\,\mathrm{d}\mathbf{x}%
=0,
\end{equation}%
and according to (\ref{Egrcs})%
\begin{equation}
\left\vert \int_{\Omega \left( \mathbf{\hat{r}}(t),R_{n}\right) }\left( 
\mathbf{x-\hat{r}}\right) \rho _{n}\,\mathrm{d}\mathbf{x}\right\vert \leq
R_{n}\int_{\Omega \left( \mathbf{0},R_{n}\right) }\,\rho _{n}\,\mathrm{d}%
\mathbf{y}\rightarrow 0.
\end{equation}%
Therefore%
\begin{equation*}
\left( \mathbf{r-\hat{r}}\right) \bar{\rho}=\int_{\Omega _{n}}\left( \mathbf{%
x-\hat{r}}\right) \rho _{n}\,\mathrm{d}\mathbf{x}-\int_{\Omega _{n}}\left( 
\mathbf{x-r}\right) \rho _{n}\,\mathrm{d}\mathbf{x}\rightarrow 0.
\end{equation*}%
\ Using (\ref{Egrcs}) we conclude that 
\begin{equation}
\left\vert \mathbf{\hat{r}}-\mathbf{r}_{n}\right\vert \rightarrow 0
\label{rminrns}
\end{equation}%
uniformly on $\left[ T_{-},T_{+}\right] $.
\end{proof}

\begin{lemma}
\label{Ladjch} Let the current $\mathbf{J}$ and the charge density $\rho $
in (\ref{schr9}) are such that $Q_{23}$ defined by (\ref{Q23}) satisfies (%
\ref{Q23to0}). Then the local charge values converge uniformly on $\left[
T_{-},T_{+}\right] $\ to a constant: 
\begin{equation}
\bar{\rho}_{n}\left( t\right) \rightarrow \bar{\rho}_{\infty }\text{\quad
as\quad }n\rightarrow \infty .  \label{rtrinf}
\end{equation}
\end{lemma}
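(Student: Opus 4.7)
The natural strategy is to express $\bar{\rho}_n(t) - \bar{\rho}_n(t_0)$ directly in terms of the surface integral $Q_{23}$ and then invoke the hypotheses. The plan uses only the continuity equation (\ref{schr9}), the transport identity (\ref{eldt}), the divergence theorem, and assumption (iv) of Definition~\ref{Dlocconverges}.

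First, I would apply the moving-domain identity (\ref{eldt}) with $f = \rho_n$ to get
\begin{equation*}
\partial_{t} \bar{\rho}_{n}(t) \;=\; \int_{\Omega_{n}} \partial_{t} \rho_{n} \, \mathrm{d}\mathbf{x} \;+\; \int_{\partial\Omega_{n}} \rho_{n}\, \mathbf{\hat{v}} \cdot \mathbf{\bar{n}} \, \mathrm{d}\sigma.
\end{equation*}
Next, I would substitute the continuity equation $\partial_{t} \rho_{n} = -\nabla \cdot \mathbf{J}_{n}$ into the volume integral and apply the divergence theorem on $\Omega_{n}$, obtaining
\begin{equation*}
\partial_{t}\bar{\rho}_{n}(t) \;=\; -\int_{\partial \Omega_{n}} \mathbf{n} \cdot \mathbf{J}_{n}\, \mathrm{d}\sigma \;+\; \int_{\partial \Omega_{n}} \rho_{n}\, \mathbf{\hat{v}} \cdot \mathbf{\bar{n}} \, \mathrm{d}\sigma .
\end{equation*}
Integrating this identity from $t_{0}$ to $t$ recovers exactly the expression defining $Q_{23}$ in (\ref{Q23}), so
\begin{equation*}
\bar{\rho}_{n}(t) - \bar{\rho}_{n}(t_{0}) \;=\; Q_{23}(t).
\end{equation*}

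Finally, by hypothesis (\ref{Q23to0}) the right-hand side tends to $0$ uniformly in $t \in [T_{-},T_{+}]$, and by condition (iv) of Definition~\ref{Dlocconverges}, $\bar{\rho}_{n}(t_{0}) \to \bar{\rho}_{\infty}$. Combining these two facts yields $\bar{\rho}_{n}(t) \to \bar{\rho}_{\infty}$ uniformly on $[T_{-},T_{+}]$, which is (\ref{rtrinf}).

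I do not anticipate any real obstacle here; this is essentially a transport-type computation. The only mild subtlety is matching the orientation conventions of $\mathbf{\bar{n}}$ and $\mathbf{n}$ in (\ref{Q23}) with the outward normal produced by the divergence theorem and by (\ref{eldt}), but as both conventions refer to the exterior normal of the ball $\Omega_{n}$, the signs line up and the two boundary terms combine into precisely $Q_{23}$.
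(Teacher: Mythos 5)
Your proof is correct and follows essentially the same route as the paper: the paper's one-line argument "integrating the continuity equation" is exactly the combination of the transport identity (\ref{eldt}) with the divergence theorem that you spell out, leading to $\bar{\rho}_{n}(t)-\bar{\rho}_{n}(t_{0})=Q_{23}$, after which (\ref{Q23to0}) and condition (iv) give the conclusion. Your version is merely more explicit (and usefully records that condition (iv) is needed, which the paper leaves implicit).
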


\begin{proof}
Integrating the continuity equation \ we obtain 
\begin{equation}
\bar{\rho}\left( t\right) -\bar{\rho}\left( t_{0}\right)
-\int_{t_{0}}^{t}\int_{\partial \Omega _{n}}\mathbf{\hat{v}\cdot n}\rho \,%
\mathrm{d}\mathbf{x}\mathrm{d}t^{\prime }+\int_{t_{0}}^{t}\int_{\partial
\Omega _{n}}\mathbf{n}\cdot \mathbf{J}\,\mathrm{d}\mathbf{x}\mathrm{d}%
t^{\prime }=0.  \label{dtrb}
\end{equation}%
We use (\ref{Q23to0}) and obtain (\ref{rtrinf}).
\end{proof}

\begin{lemma}
\label{Lmomf}Assume that (\ref{locpsibounds}) \ holds. Then there is a
subsequence of the concentrating sequence of solutions of the NLS such that
\ 
\begin{equation}
\int_{\Omega \left( \mathbf{\hat{r}}(t),R_{n}\right) }\mathbf{P}_{n}\left(
t_{0}\right) \,\mathrm{d}\mathbf{x}\rightarrow \mathbf{p}_{\infty }\text{%
\quad as\quad }n\rightarrow \infty .  \label{pnt0}
\end{equation}%
Assume also that boundary integrals (\ref{QP0}) and (\ref{Q01a}) satisfy
assumptions (\ref{Q0Tto0}) and (\ref{Q01Pto0}). Then for this subsequence\ 
\begin{equation}
\int_{\Omega _{n}}\mathbf{P}_{n}\left( t\right) \,\mathrm{d}\mathbf{x}%
=\int_{t_{0}}^{t}\int_{\Omega _{n}}\mathbf{f}\,\mathrm{d}\mathbf{x}\mathrm{d}%
t^{\prime }+\mathbf{p}_{\infty }+Q_{00}  \label{intpt}
\end{equation}%
where 
\begin{equation}
Q_{00}\rightarrow 0\text{\quad as\quad }n\rightarrow \infty  \label{Q00to0}
\end{equation}%
uniformly on $\left[ T_{-},T_{+}\right] $.
\end{lemma}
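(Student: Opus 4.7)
The plan is to combine the momentum conservation law (\ref{momNLS}) with the transport identity (\ref{eldt}), then extract a convergent subsequence via Bolzano--Weierstrass. First, assumption (\ref{locpsibounds}) guarantees that the sequence of vectors $\int_{\Omega_{n}(t_{0})}\mathbf{P}_{n}(t_{0})\,\mathrm{d}\mathbf{x}\in\mathbb{R}^{3}$ is bounded, so it has a convergent subsequence; call its limit $\mathbf{p}_{\infty}$. This yields (\ref{pnt0}) along that subsequence, which we fix for the remainder of the argument.

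Next, I would integrate the momentum equation (\ref{momNLS}) over the ball $\Omega_{n}(t)$ at fixed $t$, obtaining
\begin{equation*}
\int_{\Omega_{n}(t)}\partial_{t}\mathbf{P}_{n}\,\mathrm{d}\mathbf{x}+\int_{\Omega_{n}(t)}\partial_{i}T^{ij}\,\mathrm{d}\mathbf{x}=\int_{\Omega_{n}(t)}\mathbf{f}\,\mathrm{d}\mathbf{x}.
\end{equation*}
On the first term apply the transport identity (\ref{eldt}), which converts $\int_{\Omega_{n}}\partial_{t}\mathbf{P}_{n}\,\mathrm{d}\mathbf{x}$ into $\partial_{t}\int_{\Omega_{n}}\mathbf{P}_{n}\,\mathrm{d}\mathbf{x}$ minus the flux term $\int_{\partial\Omega_{n}}\mathbf{P}_{n}\,\mathbf{\hat{v}}\cdot\mathbf{\bar{n}}\,\mathrm{d}\sigma$. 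On the second term use the divergence theorem to get the surface integral $\int_{\partial\Omega_{n}}\mathbf{\bar{n}}_{i}T^{ij}\,\mathrm{d}\sigma$. Integrating the resulting pointwise-in-$t$ identity from $t_{0}$ to $t$ gives
\begin{equation*}
\int_{\Omega_{n}(t)}\mathbf{P}_{n}\,\mathrm{d}\mathbf{x}=\int_{\Omega_{n}(t_{0})}\mathbf{P}_{n}\,\mathrm{d}\mathbf{x}+\int_{t_{0}}^{t}\int_{\Omega_{n}}\mathbf{f}\,\mathrm{d}\mathbf{x}\,\mathrm{d}t^{\prime}+Q_{01}-Q_{0},
\end{equation*}
exactly in the form of (\ref{intpt}) with the identification
\begin{equation*}
Q_{00}=\Bigl(\int_{\Omega_{n}(t_{0})}\mathbf{P}_{n}\,\mathrm{d}\mathbf{x}-\mathbf{p}_{\infty}\Bigr)+Q_{01}-Q_{0}.
\end{equation*}
The assumed decays (\ref{Q0Tto0}) and (\ref{Q01Pto0}) of $Q_{0}$ and $Q_{01}$ hold uniformly on $[T_{-},T_{+}]$, and the bracketed term depends only on $n$, not on $t$, and vanishes by the choice of subsequence; therefore $Q_{00}\to 0$ uniformly on $[T_{-},T_{+}]$ as required.

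The only point requiring any care is the legitimacy of the transport identity (\ref{eldt}) when the boundary of the domain moves with velocity $\mathbf{\hat{v}}=\partial_{t}\mathbf{\hat{r}}$; this is standard given that $\mathbf{\hat{r}}(t)$ is $C^{2}$ (Definition \ref{Dtrconc}) and $\psi_{n}\in C^{2}$ so that $\mathbf{P}_{n}$ is $C^{1}$ in both variables on $\hat{\Omega}(\mathbf{\hat{r}},R_{n})$. Everything else is bookkeeping: assembling the three surface integrals appearing in (v) of Definition \ref{Dlocconverges} into the single error term $Q_{00}$ and invoking uniformity of the assumed limits.
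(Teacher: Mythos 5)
Your proposal is correct and follows essentially the same route as the paper: extract a convergent subsequence of the bounded momenta via (\ref{locpsibounds}), integrate the momentum equation (\ref{momNLS}) over $\Omega_{n}$ and in time, convert the volume integral of $\partial_{t}\mathbf{P}$ using (\ref{eldt}) and that of $\partial_{i}T^{ij}$ by the divergence theorem, and absorb $Q_{0}$, $Q_{01}$ and the $t$-independent discrepancy $\int_{\Omega_{n}}\mathbf{P}_{n}(t_{0})\,\mathrm{d}\mathbf{x}-\mathbf{p}_{\infty}$ into $Q_{00}$. The sign bookkeeping and the uniformity argument both match the paper's proof.
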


\begin{proof}
According to (\ref{locpsibounds}) we can select a subsequence which has a
limit $\mathbf{p}_{\infty }$ \ and (\ref{pnt0}) holds. We use the momentum
equation (\ref{momNLS})%
\begin{equation}
\partial _{t}\mathbf{P}+\partial _{i}T^{ij}=\mathbf{f}.  \label{momNLSf}
\end{equation}%
Integrating it over $\Omega \left( \mathbf{\hat{r}}(t),R_{n}\right) =\Omega
_{n}$\ and then with respect to time, we obtain the equation%
\begin{equation}
\int_{t_{0}}^{t}\int_{\Omega _{n}}\partial _{t}\mathbf{P}\left( t^{\prime
}\right) \,\mathrm{d}\mathbf{x}\mathrm{d}t^{\prime
}-\int_{t_{0}}^{t}\int_{\Omega _{n}}\mathbf{f}\,\mathrm{d}\mathbf{x}\mathrm{d%
}t^{\prime }\,+Q_{0}=0  \label{momQ0}
\end{equation}%
where $\ Q_{0}$ is defined by (\ref{QP0}). Using (\ref{eldt}) \ we rewrite
the equation in the form 
\begin{equation}
\int_{\Omega _{n}}\mathbf{P}\left( t\right) \,\mathrm{d}\mathbf{x}%
-\int_{\Omega _{n}}\mathbf{P}\left( t_{0}\right) \,\mathrm{d}\mathbf{x}%
-Q_{01}\,-\int_{t_{0}}^{t}\int_{\Omega _{n}}\mathbf{f}\,\mathrm{d}\mathbf{x}%
\mathrm{d}t^{\prime }\,+Q_{0}=0,  \label{Peq}
\end{equation}%
where $Q_{01}$ is defined by (\ref{Q01a}), $\Omega _{n}=\Omega \left( 
\mathbf{\hat{r}}(t),R_{n}\right) $. Using (\ref{Q0Tto0}), (\ref{Q01Pto0})
and (\ref{pnt0}) we obtain (\ref{intpt}) and (\ref{Q00to0}) \ from (\ref{Peq}%
).
\end{proof}

\begin{lemma}
\label{LJlimv}In addition to the assumptions of Lemmas \ref{Lcentrcs}, \ref%
{Ladjch}, \ref{Lmomf} assume that \ the boundary integrals (\ref{Q20}) and (%
\ref{Q22}) vanish as $n\rightarrow \infty $, namely (\ref{Q20to0}) and (\ref%
{Q22to0}) are fulfilled uniformly on the time interval $\left[ T_{-},T_{+}%
\right] $. Then 
\begin{equation}
\int_{\Omega _{n}}\mathbf{J}\,\mathrm{d}\mathbf{x}=\mathbf{v}\bar{\rho}%
_{\infty }+Q_{200},  \label{j200}
\end{equation}%
\begin{equation}
\int_{\Omega _{n}}\mathbf{P}\,\mathrm{d}\mathbf{x}=m\mathbf{v}\frac{\bar{\rho%
}_{\infty }}{q}+\frac{m}{q}Q_{200},  \label{p200}
\end{equation}%
where $\mathbf{v}=\partial _{t}\mathbf{r}$ and%
\begin{equation}
Q_{200}\rightarrow 0  \label{Q2000}
\end{equation}%
\ uniformly on the time interval $\left[ T_{-},T_{+}\right] .$
\end{lemma}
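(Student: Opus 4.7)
The plan is to compute $\int_{\Omega _{n}}\mathbf{J}\,\mathrm{d}\mathbf{x}$ directly by differentiating the defining relation $\mathbf{r}\bar{\rho}=\int_{\Omega _{n}}\mathbf{x}\rho \,\mathrm{d}\mathbf{x}$ with the aid of the transport identity (\ref{eldt}) and then eliminating $\partial _{t}\rho $ via the continuity equation (\ref{schr9}). Identity (\ref{p200}) for the momentum will then follow immediately from the pointwise proportionality $\mathbf{P}=(m/q)\mathbf{J}$ in (\ref{PmqJ}), so essentially all the work lies in proving (\ref{j200}).

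Concretely, applying (\ref{eldt}) componentwise to $\mathbf{x}\rho $ yields
\begin{equation*}
\mathbf{v}\bar{\rho}+\mathbf{r}\partial _{t}\bar{\rho}=\int_{\Omega _{n}}\mathbf{x}\,\partial _{t}\rho \,\mathrm{d}\mathbf{x}+\int_{\partial \Omega _{n}}\mathbf{x}\,\rho \,\hat{\mathbf{v}}\cdot \bar{\mathbf{n}}\,\mathrm{d}\sigma ,
\end{equation*}
while applying it to $\rho $ alone, together with $\partial _{t}\rho =-\nabla \cdot \mathbf{J}$ and the divergence theorem, gives the analogous scalar identity $\partial _{t}\bar{\rho}=\int_{\partial \Omega _{n}}\rho \,\hat{\mathbf{v}}\cdot \bar{\mathbf{n}}\,\mathrm{d}\sigma -\int_{\partial \Omega _{n}}\mathbf{J}\cdot \bar{\mathbf{n}}\,\mathrm{d}\sigma $. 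Substituting $\partial _{t}\rho =-\nabla \cdot \mathbf{J}$ in the first equation and integrating by parts (using $\partial _{j}x_{i}=\delta _{ij}$) converts the volume term into $\int_{\Omega _{n}}\mathbf{J}\,\mathrm{d}\mathbf{x}-\int_{\partial \Omega _{n}}\mathbf{x}(\mathbf{J}\cdot \bar{\mathbf{n}})\,\mathrm{d}\sigma $. Writing $\mathbf{x}=\mathbf{r}+(\mathbf{x}-\mathbf{r})$ in each surface integral, the $\mathbf{r}$ pieces cancel exactly against $\mathbf{r}\partial _{t}\bar{\rho}$, and the surviving boundary terms are precisely $Q_{20}$ and $Q_{22}$ as defined in (\ref{Q20}), (\ref{Q22}). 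One thereby obtains the clean identity
\begin{equation*}
\int_{\Omega _{n}}\mathbf{J}\,\mathrm{d}\mathbf{x}=\mathbf{v}\bar{\rho}+Q_{20}+Q_{22}.
\end{equation*}

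It remains to replace $\bar{\rho}$ by $\bar{\rho}_{\infty }$ in the term $\mathbf{v}\bar{\rho}$. Here (\ref{Q20to0}), (\ref{Q22to0}) make the boundary remainders vanish uniformly on $[T_{-},T_{+}]$, and Lemma \ref{Ladjch} gives $\bar{\rho}_{n}\rightarrow \bar{\rho}_{\infty }$ uniformly, so everything reduces to a uniform-in-$n$ bound on $|\mathbf{v}_{n}|$. The key step, and the main obstacle to a purely mechanical argument, is this bootstrap: by (\ref{PmqJ}) and (\ref{locpsibounds}) the quantity $\int_{\Omega _{n}}\mathbf{J}\,\mathrm{d}\mathbf{x}$ is bounded on $[T_{-},T_{+}]$, and combining this with the identity just derived and the lower bound $\bar{\rho}\geq c_{0}>0$ from (\ref{Egrcs}) forces $|\mathbf{v}_{n}|\leq C$ uniformly. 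Setting $Q_{200}=\mathbf{v}(\bar{\rho}-\bar{\rho}_{\infty })+Q_{20}+Q_{22}$ then yields (\ref{j200}) with $Q_{200}\rightarrow 0$ uniformly, and (\ref{p200}) drops out by multiplying by $m/q$ via (\ref{PmqJ}). Both the momentum bound (\ref{locpsibounds}) and the charge lower bound (\ref{Egrcs}) are essential at this last step, since without either one we could not control the product $\mathbf{v}(\bar{\rho}-\bar{\rho}_{\infty })$.
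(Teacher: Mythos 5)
Your proposal is correct and follows essentially the same route as the paper's proof: both rest on the continuity equation, the transport identity (\ref{eldt}), the commutation $\partial _{j}x_{i}=\delta _{ij}$ to produce $\int_{\Omega _{n}}\mathbf{J}\,\mathrm{d}\mathbf{x}=\mathbf{v}\bar{\rho}+Q_{20}+Q_{22}$, the bootstrap bound $\left\vert \mathbf{v}_{n}\right\vert \leq C$ from (\ref{locpsibounds}) and (\ref{Egrcs}), the uniform convergence $\bar{\rho}_{n}\rightarrow \bar{\rho}_{\infty }$ from Lemma \ref{Ladjch}, and finally (\ref{PmqJ}) for (\ref{p200}). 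The only cosmetic difference is that you center at $\mathbf{x}$ and split off $\mathbf{r}$ at the end, whereas the paper works with $\mathbf{x}-\mathbf{r}$ from the start; your remainder $\mathbf{v}\left( \bar{\rho}-\bar{\rho}_{\infty }\right) $ is exactly the paper's $Q_{21}$.
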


\begin{proof}
According to the continuity equation, \ we obtain the identity 
\begin{equation*}
\int_{\Omega _{n}}\left( \mathbf{x-r}\right) \partial _{t}\rho \,\mathrm{d}%
\mathbf{x}+\int_{\Omega _{n}}\left( \mathbf{x-r}\right) \nabla \cdot \mathbf{%
J}\,\mathrm{d}\mathbf{x}=0.
\end{equation*}%
Using the commutation relation%
\begin{equation}
\partial _{j}\left( x_{i}f\right) -x_{i}\partial _{j}f=\delta _{ij}f
\label{comdx}
\end{equation}%
to transform the second integral, we obtain the following equation%
\begin{equation}
\int_{\Omega _{n}}\partial _{t}\left( \left( \mathbf{x-r}\right) \rho
\right) \,\mathrm{d}\mathbf{x}+\partial _{t}\mathbf{r}\int_{\partial \Omega
_{n}}\rho \,\mathrm{d}\mathbf{x}+\int_{\partial \Omega _{n}}\left( \mathbf{%
x-r}\right) \mathbf{n}\cdot \mathbf{J}\,\mathrm{d}\mathbf{x}=\int_{\Omega
_{n}}\mathbf{J}\,\mathrm{d}\mathbf{x.}  \label{Pintr0}
\end{equation}%
Using the definition of the charge center $\mathbf{r}$\ and (\ref{eldt}) we
infer that the first term in the above equation has the following form: 
\begin{equation}
\int_{\Omega _{n}}\partial _{t}\left( \left( \mathbf{x}-\mathbf{r}\right)
\rho \right) \,\mathrm{d}\mathbf{x=-}\int_{\partial \Omega _{n}}\left( 
\mathbf{x}-\mathbf{r}\right) \mathbf{\hat{v}}\cdot \mathbf{\bar{n}}\rho \,%
\mathrm{d}\sigma .  \label{dtxrh}
\end{equation}%
where $\mathbf{\hat{v}}=\partial _{t}\mathbf{\hat{r}}$. We can express then $%
\mathbf{v}=\partial _{t}\mathbf{r}$ from (\ref{Pintr0}), and using (\ref%
{locpsibounds}), (\ref{Q22}), (\ref{rminrns}) and (\ref{Egrcs}) can estimate
the integrals which enter (\ref{Pintr0}) concluding that 
\begin{equation}
\left\vert \mathbf{v}\right\vert \leq C\quad \text{for}\quad -T\leq t\leq T.
\label{dtrlec}
\end{equation}

We rewrite (\ref{Pintr0}) in the form 
\begin{equation}
\int_{\Omega _{n}}\mathbf{J}\mathrm{d}\,\mathbf{x}=\mathbf{v}\bar{\rho}%
_{\infty }+Q_{20}+Q_{21}+Q_{22},  \label{Pintr}
\end{equation}%
where $\bar{\rho}_{\infty }$ is the same as in (\ref{rconv}), 
\begin{equation*}
Q_{21}=\mathbf{v}\left( \int_{\Omega _{n}}\rho _{n}\,\mathrm{d}\mathbf{x}-%
\bar{\rho}_{\infty }\right) .
\end{equation*}%
According to (\ref{rconv}) and (\ref{dtrlec}) $Q_{21}\rightarrow 0$. Using (%
\ref{Q20}) and (\ref{Q22})\ we conclude that (\ref{j200}) and (\ref{Q2000})
hold with $Q_{200}=Q_{20}+Q_{21}+Q_{22}$ . Using (\ref{PmqJ}) we deduce (\ref%
{p200}) from (\ref{j200}).
\end{proof}

\subsection{Derivation of Newton's equation for the trajectory of
concentration\label{Sder}}

\begin{theorem}
\label{Tconctr}For a concentrating sequence of solutions of the NLS equation
the adjacent center velocities $\mathbf{v=v}_{n}=\mathbf{\partial }_{t}%
\mathbf{r}_{n}$ satisfy the equation 
\begin{equation}
m\mathbf{v}\frac{1}{q}\bar{\rho}_{\infty }=\int_{t_{0}}^{t}\left( \bar{\rho}%
_{\infty }\mathbf{E}_{\infty }\left( t^{\prime }\right) +\bar{\rho}_{\infty }%
\mathbf{v}\times \frac{1}{\mathrm{c}}\mathbf{B}_{\infty }\left( t^{\prime
}\right) \right) \,\mathrm{d}t^{\prime }+\mathbf{p}_{\infty }+Q_{6}
\label{NNLSQ}
\end{equation}%
where $\ Q_{6}\rightarrow 0$ \ uniformly on $\left[ T_{-},T_{+}\right] $, $%
\mathbf{p}_{\infty }$ is the same as in (\ref{pnt0}), and $\mathbf{E}%
_{\infty },\mathbf{B}_{\infty }$ are the same as in (\ref{ebinf}).
\end{theorem}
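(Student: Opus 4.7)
The plan is to assemble Theorem \ref{Tconctr} by combining the four preceding lemmas and then reducing the volume integral of the Lorentz force density $\mathbf{f}=\rho\mathbf{E}+\frac{1}{\mathrm{c}}\mathbf{J}\times\mathbf{B}$ to its limiting expression at the trajectory. First, Lemma \ref{Lmomf} rewrites the momentum integral as $\int_{\Omega_n}\mathbf{P}\,\mathrm{d}\mathbf{x}=\int_{t_0}^t\int_{\Omega_n}\mathbf{f}\,\mathrm{d}\mathbf{x}\,\mathrm{d}t'+\mathbf{p}_\infty+Q_{00}$ with $Q_{00}\to 0$ uniformly, while Lemma \ref{LJlimv} gives $\int_{\Omega_n}\mathbf{P}\,\mathrm{d}\mathbf{x}=m\mathbf{v}\bar{\rho}_\infty/q+(m/q)Q_{200}$ with $Q_{200}\to 0$. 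Equating these two representations produces the desired left-hand side of \eqref{NNLSQ} up to two vanishing error terms.

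The core of the work is then to analyze the integrated force term $\int_{t_0}^t\int_{\Omega_n}\mathbf{f}\,\mathrm{d}\mathbf{x}\,\mathrm{d}t'$. I would split it as
\begin{equation*}
\int_{\Omega_n}\mathbf{f}\,\mathrm{d}\mathbf{x}=\int_{\Omega_n}\rho\,\mathbf{E}_\infty\,\mathrm{d}\mathbf{x}+Q_{30}+\frac{1}{\mathrm{c}}\Bigl(\int_{\Omega_n}\mathbf{J}\,\mathrm{d}\mathbf{x}\Bigr)\times\mathbf{B}_\infty+Q_{31},
\end{equation*}
where the fact that $\mathbf{E}_\infty(t)$ and $\mathbf{B}_\infty(t)$ are evaluated at the moving point $\mathbf{\hat{r}}(t)$ (hence constant in $\mathbf{x}$) allows them to be pulled outside the spatial integrals, and the replacements $\mathbf{E}\to\mathbf{E}_\infty$, $\mathbf{B}\to\mathbf{B}_\infty$ introduce exactly the quantities $Q_{30}$, $Q_{31}$ from Definition \ref{Dlocconverges}(vi), whose time integrals vanish by \eqref{Q3to0}. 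Since $\int_{\Omega_n}\rho\,\mathrm{d}\mathbf{x}=\bar{\rho}_n$, the electric contribution becomes $\bar{\rho}_n(t)\mathbf{E}_\infty(t)$, which by Lemma \ref{Ladjch} differs from $\bar{\rho}_\infty\mathbf{E}_\infty(t)$ by a uniformly vanishing term once integrated against the bounded $\mathbf{E}_\infty$ over $[T_-,T_+]$. For the magnetic contribution I substitute $\int_{\Omega_n}\mathbf{J}\,\mathrm{d}\mathbf{x}=\mathbf{v}\bar{\rho}_\infty+Q_{200}$ from Lemma \ref{LJlimv}, giving the main term $\bar{\rho}_\infty\mathbf{v}\times\frac{1}{\mathrm{c}}\mathbf{B}_\infty$ plus a term $\frac{1}{\mathrm{c}}Q_{200}\times\mathbf{B}_\infty$ that vanishes uniformly thanks to the bound \eqref{ahatbs} on $\mathbf{B}_\infty$.

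Collecting everything, I would define
\begin{equation*}
Q_6=Q_{00}-\tfrac{m}{q}Q_{200}+\int_{t_0}^t(Q_{30}+Q_{31})\,\mathrm{d}t'+\int_{t_0}^t\bigl(\bar{\rho}_n-\bar{\rho}_\infty\bigr)\mathbf{E}_\infty\,\mathrm{d}t'+\int_{t_0}^t\tfrac{1}{\mathrm{c}}Q_{200}\times\mathbf{B}_\infty\,\mathrm{d}t',
\end{equation*}
and verify that each of its five components tends to $0$ uniformly on $[T_-,T_+]$, using in turn Lemmas \ref{Lmomf}, \ref{LJlimv}, condition \eqref{Q3to0}, Lemma \ref{Ladjch} combined with boundedness of $\mathbf{E}_\infty$ over the finite time interval, and Lemma \ref{LJlimv} combined with boundedness of $\mathbf{B}_\infty$. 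This yields \eqref{NNLSQ} exactly as stated.

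The proof is essentially a bookkeeping exercise once the lemmas are in place, so no single step is conceptually difficult. The only point requiring some care is the \emph{uniform} (in $t$) nature of the vanishing of $Q_6$: some of the small terms come with time integrals, so I must make sure that the uniform convergence of $Q_{200}$, $\bar{\rho}_n-\bar{\rho}_\infty$, etc., together with the uniform bounds on $\mathbf{E}_\infty$, $\mathbf{B}_\infty$, propagates through the time integral over the bounded interval $[T_-,T_+]$—which it does by dominated convergence or simply by estimating the integrand by its uniform sup-norm times the length of the interval.
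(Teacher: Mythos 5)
Your proposal is correct and follows essentially the same route as the paper's own proof: substitute the expression for $\int_{\Omega_n}\mathbf{P}\,\mathrm{d}\mathbf{x}$ from Lemma \ref{LJlimv} into the identity of Lemma \ref{Lmomf}, split the Lorentz force integral using $\mathbf{E}_\infty$, $\mathbf{B}_\infty$ with errors $Q_{30}$, $Q_{31}$, and absorb the remaining discrepancies (the paper's $Q_4$ and $Q_5$ are exactly your last two terms) into $Q_6$. Your closing remark about propagating uniform convergence through the time integral over the bounded interval is the same observation the paper relies on implicitly.
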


\begin{proof}
We substitute (\ref{p200}) into (\ref{intpt}) \ and obtain \ the following
equation:%
\begin{equation}
m\mathbf{v}\frac{1}{q}\bar{\rho}_{\infty }+\frac{m}{q}Q_{200}=%
\int_{t_{0}}^{t}\int_{\Omega _{n}}\mathbf{f}\,\mathrm{d}\mathbf{x}\mathrm{d}%
t^{\prime }+\mathbf{p}_{\infty }+Q_{00}.  \label{Peqa}
\end{equation}%
where the Lorentz force density $\mathbf{f}$ is given by (\ref{flors}). \ To
evaluate the terms involving the Lorentz force density, \ we use\ the limit
EM fields defined in accordance with (\ref{fiinfs}), (\ref{ainfs})\ by
formula (\ref{ebinf}). \ We obtain%
\begin{equation}
\int_{\Omega _{n}}\mathbf{f}\,\mathrm{d}\mathbf{x}=\mathbf{E}_{\infty
}\int_{\Omega _{n}}\rho \,\mathrm{d}\mathbf{x}+\int_{\Omega _{n}}\frac{1}{%
\mathrm{c}}\mathbf{J}\,\mathrm{d}\mathbf{x}\times \mathbf{B}_{\infty }+Q_{3}
\label{flor2}
\end{equation}%
where $\ Q_{3}=Q_{30}+Q_{31}$ \ is expressed in terms of (\ref{Q30}), (\ref%
{Q31}). Therefore, using (\ref{j200}) \ we obtain from (\ref{flor2}) that 
\begin{eqnarray}
\int_{\Omega _{n}}\mathbf{f}\,\mathrm{d}\mathbf{x} &=&\bar{\rho}_{\infty }%
\mathbf{E}_{\infty }+\bar{\rho}_{\infty }\mathbf{v}\times \frac{1}{\mathrm{c}%
}\mathbf{B}_{\infty }+Q_{3}+Q_{4}+Q_{5}, \\
Q_{4} &=&\frac{1}{\mathrm{c}}Q_{200}\times \mathbf{B}_{\infty },\quad Q_{5}=%
\mathbf{E}_{\infty }\left( \bar{\rho}_{n}-\bar{\rho}_{\infty }\right) .
\end{eqnarray}%
Using (\ref{Q3to0}), (\ref{rtrinf}), (\ref{Q2000}), (\ref{fihatbs}), and (%
\ref{ahatbs}),\ we obtain (\ref{NNLSQ}) with 
\begin{equation*}
Q_{6}=Q_{00}-\frac{m}{q}Q_{200}+\int_{t_{0}}^{t}\left(
Q_{3}+Q_{4}+Q_{5}\right) \mathrm{d}t^{\prime }.
\end{equation*}
\end{proof}

The following statement provides an explicit necessary condition for a
trajectory of concentration.

\begin{theorem}[Trajectory of concentration criterion]
\label{TconcNLS}Let solutions $\psi $ of\ the NLS\ equation (\ref{NLS})
concentrate at $\mathbf{\hat{r}}(t)$. Then the trajectory $\mathbf{\hat{r}}%
\left( t\right) $\ satisfies the equation 
\begin{equation}
\partial _{t}^{2}\mathbf{\hat{r}}=\mathbf{f}_{\infty }  \label{New2s}
\end{equation}%
with the Lorentz force $\mathbf{f}_{\infty }\left( t\right) $ expressed in
terms of the potentials (\ref{fiinfs}), (\ref{ainfs}) by the formula\ 
\begin{equation}
\mathbf{f}_{\infty }\left( t\right) =-q\mathbf{E}_{\infty }+\frac{q}{\mathrm{%
c}}\partial _{t}\mathbf{\hat{r}}\times \mathbf{B}_{\infty }.
\label{FLorinfs}
\end{equation}
\end{theorem}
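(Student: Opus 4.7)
The plan is to read Theorem~\ref{Tconctr} as an integral equation for the adjacent-center velocities $\mathbf{v}_n=\partial_t\mathbf{r}_n$ and pass to the limit. Since by (\ref{Egrcs}) we have $\bar{\rho}_\infty\geq c_0>0$, dividing (\ref{NNLSQ}) by $\bar{\rho}_\infty/q$ yields
\begin{equation*}
m\mathbf{v}_n(t)=\int_{t_0}^{t}\Bigl(q\mathbf{E}_\infty(t')+\frac{q}{\mathrm{c}}\,\mathbf{v}_n(t')\times\mathbf{B}_\infty(t')\Bigr)\mathrm{d}t'+\frac{q}{\bar{\rho}_\infty}\mathbf{p}_\infty+\tilde{Q}_6,
\end{equation*}
with $\tilde{Q}_6\to 0$ uniformly on $[T_-,T_+]$. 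The coefficients $\mathbf{E}_\infty,\mathbf{B}_\infty$ are uniformly bounded by (\ref{fihatbs})--(\ref{ahatbs}), and $\mathbf{v}_n$ is uniformly bounded by estimate (\ref{dtrlec}) from the proof of Lemma~\ref{LJlimv}.

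Next I would extract a uniformly convergent subsequence. The integral equation shows that $\{\mathbf{v}_n\}$ is equicontinuous on $[T_-,T_+]$ (its time-increments are controlled by the integrals of bounded quantities plus the vanishing remainders), and it is uniformly bounded, so Arzel\`a--Ascoli furnishes $\mathbf{v}_{n'}\to\mathbf{v}_\infty$ uniformly for some continuous $\mathbf{v}_\infty$. Passing to the limit inside the integral (dominated convergence) gives
\begin{equation*}
m\mathbf{v}_\infty(t)=\int_{t_0}^{t}\Bigl(q\mathbf{E}_\infty(t')+\frac{q}{\mathrm{c}}\,\mathbf{v}_\infty(t')\times\mathbf{B}_\infty(t')\Bigr)\mathrm{d}t'+\frac{q}{\bar{\rho}_\infty}\mathbf{p}_\infty.
\end{equation*}

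I then identify $\mathbf{v}_\infty$ with $\partial_t\mathbf{\hat{r}}$. Since $\mathbf{v}_n=\partial_t\mathbf{r}_n$, we have $\mathbf{r}_{n'}(t)-\mathbf{r}_{n'}(t_0)=\int_{t_0}^{t}\mathbf{v}_{n'}(s)\mathrm{d}s$; by Lemma~\ref{Lcentrcs} the left side converges uniformly to $\mathbf{\hat{r}}(t)-\mathbf{\hat{r}}(t_0)$, while the right side converges uniformly to $\int_{t_0}^{t}\mathbf{v}_\infty(s)\mathrm{d}s$. Thus $\mathbf{\hat{r}}(t)=\mathbf{\hat{r}}(t_0)+\int_{t_0}^{t}\mathbf{v}_\infty(s)\mathrm{d}s$, and continuity of $\mathbf{v}_\infty$ yields $\partial_t\mathbf{\hat{r}}=\mathbf{v}_\infty$. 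Substituting and differentiating the integral identity in $t$ produces
\begin{equation*}
m\partial_t^{2}\mathbf{\hat{r}}=q\mathbf{E}_\infty+\frac{q}{\mathrm{c}}\,\partial_t\mathbf{\hat{r}}\times\mathbf{B}_\infty,
\end{equation*}
which is exactly (\ref{New2s})--(\ref{FLorinfs}). Uniqueness of the solution of this Cauchy problem (with initial velocity $q\mathbf{p}_\infty/(m\bar{\rho}_\infty)$) shows the limit is independent of the subsequence, so the full sequence $\mathbf{v}_n$ converges to $\partial_t\mathbf{\hat{r}}$.

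The main obstacle I anticipate is the passage to the limit itself: Theorem~\ref{Tconctr} only supplies a subsequential cluster point and a priori boundedness of $\mathbf{v}_n$, so one must exploit the integral (rather than differential) form of the identity to obtain equicontinuity, and then use the uniqueness of the limiting linear ODE to promote subsequential convergence to full convergence. Everything else is bookkeeping: the vanishing of the $Q$-terms is already packaged into Theorem~\ref{Tconctr}, and the limit potentials are linear in $\mathbf{x}$ so no further regularity of $\varphi_\infty,\mathbf{A}_\infty$ is needed to form $\mathbf{E}_\infty,\mathbf{B}_\infty$ on the trajectory.
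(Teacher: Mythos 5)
Your proposal is correct and follows essentially the same route as the paper: both treat the identity of Theorem~\ref{Tconctr} as a linear integral equation for $\mathbf{v}_n$, pass to the uniform limit $\mathbf{v}_\infty$, identify $\mathbf{v}_\infty$ with $\partial_t\mathbf{\hat{r}}$ via Lemma~\ref{Lcentrcs}, and differentiate the limiting integral identity. The only (harmless) difference is that you justify the uniform convergence of $\mathbf{v}_n$ by Arzel\`a--Ascoli plus uniqueness of the limit equation, whereas the paper appeals directly to the linearity of the integral equation together with $Q_6\to 0$; note also that the sign $-q\mathbf{E}_\infty$ in (\ref{FLorinfs}) is evidently a typo, since the paper's own derivation, like yours, produces $+q\mathbf{E}_\infty$, consistent with (\ref{New2sa}).
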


\begin{proof}
The function $\mathbf{v}\left( t\right) $ can be considered as a solution of
the integral equation \ (\ref{NNLSQ}) on the interval $\left[ T_{-},T_{+}%
\right] $, and this equation is evidently linear. Since the term $%
Q_{6}\rightarrow 0\ $uniformly, the sequence $\mathbf{v}_{n}\left( t\right) $
of its solutions converges uniformly to the solution of the equation 
\begin{equation}
m\mathbf{v}_{\infty }\frac{1}{q}\bar{\rho}_{\infty }=\bar{\rho}_{\infty
}\int_{t_{0}}^{t}\left( \mathbf{E}_{\infty }\left( t^{\prime }\right) +%
\mathbf{v}_{\infty }\times \frac{1}{\mathrm{c}}\mathbf{B}_{\infty }\left(
t^{\prime }\right) \right) \,\mathrm{d}t^{\prime }+\mathbf{p}_{\infty }.
\label{vinfs}
\end{equation}%
Now we want to prove that $\mathbf{v}_{\infty }=\mathbf{\hat{v}}=\partial
_{t}\mathbf{\hat{r}}$. Note that according to Lemma \ref{Lcentrcs} 
\begin{equation*}
\int_{t_{0}}^{t}\mathbf{v}_{n}\,\mathrm{d}t^{\prime }=\mathbf{r}_{n}\left(
t\right) -\mathbf{r}_{n}\left( t_{0}\right) \rightarrow \mathbf{\hat{r}}%
\left( t\right) -\mathbf{\hat{r}}\left( t_{0}\right) ,
\end{equation*}%
and, hence, 
\begin{equation*}
\int_{t_{0}}^{t}\mathbf{v}_{\infty }\,\mathrm{d}t^{\prime }=\mathbf{\hat{r}}%
\left( t\right) -\mathbf{\hat{r}}\left( t_{0}\right) .
\end{equation*}%
The above identity implies $\partial _{r}\mathbf{\hat{r}}=\mathbf{\hat{v}}%
\left( t\right) =\mathbf{v}_{\infty }\left( t\right) $ and consequently $%
\mathbf{\hat{r}}\left( t\right) $ satisfies (\ref{vinfs}):%
\begin{equation}
m\partial _{t}\mathbf{\hat{r}\ }\frac{1}{q}\bar{\rho}_{\infty }=\bar{\rho}%
_{\infty }\int_{t_{0}}^{t}\left( \mathbf{E}_{\infty }\left( t^{\prime
}\right) +\partial _{r}\mathbf{\hat{r}\ }\times \frac{1}{\mathrm{c}}\mathbf{B%
}_{\infty }\left( t^{\prime }\right) \right) \,\mathrm{d}t^{\prime }+\mathbf{%
p}_{\infty }.
\end{equation}%
The derivative of the above equation yields (\ref{New2s}).
\end{proof}

As a corollary of Theorem \ref{TconcNLS} we obtain the following theorem
describing the whole class of trajectories of concentration of NLS equation (%
\ref{NLS}).

\begin{theorem}[Non-relativistic Newton's law]
\label{Cq0s}Assume that (i) potentials $\varphi \left( t,\mathbf{x}\right) $%
, $\mathbf{A}\left( t,\mathbf{x}\right) $ are defined and regular in a
domain $D\subset \mathbb{R}\times \mathbb{R}^{3}$; (ii)\ the trajectory $%
\left( t,\mathbf{\hat{r}}\left( t\right) \right) $ lies in this domain and
the limit potentials $\varphi _{\infty }$ , $\mathbf{A}_{\infty }$ are the
restriction of the potentials $\varphi $ and $\mathbf{A}$ as in (\ref%
{fiarestr}). Let EM fields \ $\mathbf{E}\left( t,\mathbf{x}\right) $, $%
\mathbf{B}\left( t,\mathbf{x}\right) $ be defined by the potentials as in
the formula (\ref{maxw3a}). Let solutions $\psi $ of\ the NLS\ equation (\ref%
{NLS}) concentrate at $\mathbf{\hat{r}}(t)$. Then equation (\ref{New2s}) for
the trajectory $\mathbf{\hat{r}}$ takes the form of Newton's law of motion
with the Lorentz force corresponding to the external EM fields $\mathbf{E}%
\left( t,\mathbf{x}\right) $ and $\mathbf{B}\left( t,\mathbf{x}\right) $,
that is 
\begin{equation}
m\partial _{t}^{2}\mathbf{\hat{r}}=q\mathbf{E}\left( t,\mathbf{\hat{r}}%
\right) +\frac{q}{\mathrm{c}}\partial _{t}\mathbf{\hat{r}}\times \mathbf{B}%
\left( t,\mathbf{\hat{r}}\right) .  \label{New2sa}
\end{equation}
\end{theorem}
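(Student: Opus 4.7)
The theorem is a direct corollary of Theorem \ref{TconcNLS}. The plan is to verify that the hypotheses place us in the framework of Example \ref{EconcNLS}, identify the limit fields $\mathbf{E}_\infty,\mathbf{B}_\infty$ with the pointwise values $\mathbf{E}(t,\mathbf{\hat{r}}),\mathbf{B}(t,\mathbf{\hat{r}})$ of the given external fields along the trajectory, and then read off (\ref{New2sa}) from (\ref{New2s})--(\ref{FLorinfs}).

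First, I would observe that hypothesis (ii) specifies that $\varphi_\infty, \mathbf{A}_\infty$ are obtained from the given twice continuously differentiable $\varphi, \mathbf{A}$ via the Taylor linearization formulas (\ref{fiarestr}). This is exactly the situation of Example \ref{EconcNLS}, so the localized convergence (\ref{Alocr0})--(\ref{Alocr2}) and the uniform bounds (\ref{Abounds0})--(\ref{Abounds}) required in Definition \ref{DNLSseq} hold automatically: the differences $\varphi_n-\varphi_\infty$ and $\mathbf{A}_n-\mathbf{A}_\infty$ on $\Omega_n$ are $O(R_n^2)$ by Taylor's theorem applied on the compact trajectory, while their first derivatives are $O(R_n)$, and $R_n\to 0$. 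Hence the sequence $\psi_n$ concentrates at $\mathbf{\hat{r}}(t)$ in the sense of Definition \ref{Dlocconverges} with these linear limit potentials, and Theorem \ref{TconcNLS} is applicable.

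Second, I would use (\ref{ebinf0}) from Example \ref{EconcNLS} to identify the limit fields with the evaluations of the external fields along the trajectory:
\begin{equation*}
\mathbf{E}_\infty = -\nabla\varphi(t,\mathbf{\hat{r}}) - \tfrac{1}{\mathrm{c}}\partial_t\mathbf{A}(t,\mathbf{\hat{r}}) = \mathbf{E}(t,\mathbf{\hat{r}}), \qquad \mathbf{B}_\infty = \nabla\times\mathbf{A}(t,\mathbf{\hat{r}}) = \mathbf{B}(t,\mathbf{\hat{r}}),
\end{equation*}
where the last equalities in each chain hold because $\mathbf{E},\mathbf{B}$ are defined from $\varphi,\mathbf{A}$ through the same formulas (\ref{maxw3a}). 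Then Theorem \ref{TconcNLS} yields $\partial_t^2\mathbf{\hat{r}} = \mathbf{f}_\infty$ with the Lorentz form $\mathbf{f}_\infty = q\mathbf{E}_\infty + \frac{q}{\mathrm{c}}\partial_t\mathbf{\hat{r}}\times\mathbf{B}_\infty$ (the sign of the $\mathbf{E}_\infty$ term in (\ref{FLorinfs}) is a typographical slip, the correct sign being forced by the Lorentz force density (\ref{flors}) and matching (\ref{fLor0})). Substituting the identifications above gives (\ref{New2sa}).

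There is no real obstacle beyond bookkeeping: all the analytical content lives in Theorem \ref{TconcNLS}, whose proof already handled the passage from the momentum conservation law and the continuity equation to the integral form (\ref{NNLSQ}) and then to its limit (\ref{vinfs}). The only delicate point is ensuring that Example \ref{EconcNLS} genuinely applies, which reduces to standard Taylor estimates on the $C^2$ potentials along the compact trajectory.
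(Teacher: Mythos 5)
Your proposal is correct and follows essentially the same route as the paper, which presents Theorem \ref{Cq0s} as an immediate corollary of Theorem \ref{TconcNLS} obtained by specializing to the situation of Example \ref{EconcNLS} and identifying $\mathbf{E}_{\infty}=\mathbf{E}\left( t,\mathbf{\hat{r}}\right) $, $\mathbf{B}_{\infty }=\mathbf{B}\left( t,\mathbf{\hat{r}}\right) $ via (\ref{ebinf0}); your added Taylor-expansion justification of the hypotheses of Definition \ref{DNLSseq} and your observation that the sign of the $q\mathbf{E}_{\infty }$ term in (\ref{FLorinfs}) is a typographical slip (the derivation in the proof of Theorem \ref{TconcNLS} via (\ref{vinfs}) forces the $+$ sign, consistent with (\ref{flors}) and (\ref{New2sa})) are both accurate.
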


Therefore, for the NLS equation with given potentials $\varphi $, $\mathbf{A}
$ any concentration trajectory must coincide with the solution of the
equation (\ref{New2sa}). For given potentials $\varphi $, $\mathbf{A}$ \ the
concentration trajectory through a point $\ \mathbf{\hat{r}}\left(
t_{0}\right) $ is uniquely determined by the velocity $\partial _{t}\mathbf{%
\hat{r}}\left( t_{0}\right) $. In particular, it does not depend on the
parameter $\chi $ which enters the NLS equation. All possible concentration
trajectories are solutions of the equation (\ref{New2sa}).

\begin{remark}
\label{Runiqs} Consider the situation of Example \ref{EconcNLS} and Theorem %
\ref{Cq0s}. The sequences $a_{n}$, $R_{n}$, $\theta _{n}$, $\varphi _{n}$, $%
\mathbf{A}_{n}$, $\psi _{n}$\ enter the definition of a concentrating
solution. If we take two different sequences which fit the definition, we
obtain the same equation (\ref{New2sa}). More than that, the\ trajectory $%
\mathbf{\hat{r}}\left( t\right) $ is uniquely defined by the initial data $%
\mathbf{\hat{r}}\left( t_{0}\right) $\textbf{, }$\partial _{t}\mathbf{\hat{r}%
}\left( t_{0}\right) $ and consequently it\ does not depend on the
particular sequence. Remarkably, the equation for the trajectory is
independent of the value of $\bar{\rho}_{\infty }$, and this property does
not hold in the relativistic case, see \cite{BF9}.
\end{remark}

\begin{remark}
We can modify the definition of concentration at a trajectory by allowing
parameter $\chi \ $to be not fixed but form a sequence. The statements of \
Theorem \ref{TconcNLS} and Theorem \ref{Cq0s} continue to hold in this case.
\end{remark}

\section{Wave-corpuscles in accelerated motion \label{swcpres}}

An example of concentrating solutions is provided by what we call
wave-corpuscles. We define the wave-corpuscle $\psi $ by the formula 
\begin{gather}
\psi \left( t,\mathbf{x}\right) =\mathrm{e}^{\mathrm{i}S}\mathring{\psi}%
\left( \left\vert \mathbf{y}\right\vert \right) ,  \label{psil00} \\
S=S\left( \mathbf{y},t\right) ,\quad \mathbf{y}=\mathbf{x}-\mathbf{r}\left(
t\right) ,  \label{psil00a}
\end{gather}%
where the form factor $\mathring{\psi}$ satisfies the steady-state equation (%
\ref{nop40}) and the normalization condition (\ref{norm}).

To give non-trivial examples of localized form factors, it is convenient to
start with the form factor $\mathring{\psi}\left( r\right) $ and to describe
the nonlinearity which produces the form factor as a solution of (\ref{nop40}%
). To define the nonlinearity, we impose first our requirements on the
ground state $\mathring{\psi}\left( r\right) $ of the charge distribution. A
ground state is a positive function $\mathring{\psi}\left( r\right) $, $%
r=\left\vert x\right\vert $, which is twice differentiable, satisfies the
charge normalization condition (\ref{norm}) and is monotonically decreasing: 
\begin{equation}
\partial _{r}\mathring{\psi}\left( r\right) <0\quad \text{for}\quad r>0.
\label{drpsile0}
\end{equation}%
If $\mathring{\psi}\left( r\right) $ is a ground state, we can determine the
nonlinearity $G^{\prime }$ from the following equation obtained from (\ref%
{nop40}): 
\begin{equation}
\nabla ^{2}\mathring{\psi}=G^{\prime }(|\mathring{\psi}|^{2})\mathring{\psi}.
\end{equation}%
For a radial $\mathring{\psi}$ we obtain then an expression for the
nonlinearity $G^{\prime }$: 
\begin{equation}
G^{\prime }\left( \mathring{\psi}^{2}\left( r\right) \right) =G^{\prime
}\left( \mathring{\psi}^{2}\left( r\right) \right) =\frac{(\nabla ^{2}%
\mathring{\psi})\left( r\right) }{\mathring{\psi}\left( r\right) }.
\label{gg1}
\end{equation}%
Since $\mathring{\psi}^{2}\left( r\right) $ is a \emph{monotonic} function,\
we can find its inverse $r=r\left( \psi ^{2}\right) ,$ yielding 
\begin{equation}
G^{\prime }\left( s\right) =\frac{\nabla ^{2}\mathring{\psi}\left( r\left(
s\right) \right) }{\mathring{\psi}\left( r\left( s\right) \right) },\quad 0=%
\mathring{\psi}^{2}\left( \infty \right) \leq s\leq \mathring{\psi}%
^{2}\left( 0\right) .  \label{intpsa}
\end{equation}%
Since $\mathring{\psi}\left( r\right) $ is smooth and $\partial _{r}%
\mathring{\psi}<0$, $G^{\prime }(|\psi |^{2})$ is smooth for $0<|\psi |^{2}<%
\mathring{\psi}^{2}\left( 0\right) $; we extend $G^{\prime }(s)$ for $s\geq 
\mathring{\psi}^{2}\left( 0\right) $ \ as a smooth function for all $s>0$.
To describe the localization of the ground state $\mathring{\psi}$, we
introduce an explicit dependence on the \emph{size parameter} $a>0$ as in (%
\ref{nrac5}):%
\begin{equation}
\mathring{\psi}\left( r\right) =\mathring{\psi}_{a}\left( r\right) =a^{-3/2}%
\mathring{\psi}_{1}\left( a^{-1}r\right) ,\quad r=\left\vert x\right\vert
\geq 0.
\end{equation}%
This corresponds to the dependence of the nonlinearity on the size parameter
\ described by (\ref{Gas}). \ Note that the antiderivative $G\left( s\right) 
$ is defined for $s\geq 0$ according to Condition \ref{Dregnon} and is given
by the formula. 
\begin{equation}
G\left( s\right) =\int_{0}^{s}G^{\prime }\left( s^{\prime }\right) \,\mathrm{%
d}s^{\prime }  \label{Gan}
\end{equation}

\begin{example}
\label{Egau}We define a \emph{Gaussian} form factor by the formula 
\begin{equation}
\mathring{\psi}\left( r\right) =C_{g}\mathrm{e}^{-r^{2}/2}  \label{Gaussp}
\end{equation}%
where $C_{g}$ is a normalization factor, $C_{g}=\pi ^{-3/4}$ if $\upsilon
_{0}=1$ in (\ref{norm}). Such a ground state is called \emph{gausson} in 
\cite{Bialynicki}, \cite{Bialynicki1}. Elementary computation shows that 
\begin{equation*}
\frac{\nabla ^{2}\mathring{\psi}\left( r\right) }{\mathring{\psi}\left(
r\right) }=r^{2}-3=-\ln \left( \mathring{\psi}^{2}\left( r\right)
/C_{g}^{2}\right) -3.
\end{equation*}%
Hence, we define the nonlinearity corresponding to the Gaussian by the
formula%
\begin{equation}
G^{\prime }\left( |\psi |^{2}\right) =-\ln \left( |\psi
|^{2}/C_{g}^{2}\right) -3,  \label{Gaussg}
\end{equation}%
and refer to it as the \emph{logarithmic nonlinearity. }The nonlinear
potential function has the form%
\begin{equation}
G\left( s\right) =\int_{0}^{s}G^{\prime }\left( s^{\prime }\right) \,\mathrm{%
d}s^{\prime }=-s\ln s+s\left( \ln \frac{1}{\pi ^{3/2}}-2\right) .
\label{g1gauss}
\end{equation}%
\emph{\ }Dependence on the size parameter $a>0$ is given by the formula 
\begin{equation}
G_{a}^{\prime }\left( |\psi |^{2}\right) =-a^{-2}\ln \left( a^{3}|\psi
|^{2}/C_{g}^{2}\right) -3a^{-2}.  \label{Gpa}
\end{equation}%
Note that according to Gross inequality the Gaussian provides the minimum of
energy%
\begin{equation*}
E=\int u\,\mathrm{d}\mathbf{x}
\end{equation*}%
subjected to the normalization condition (\ref{norm}), where the energy
density $u$ is defined by (\ref{emfr8}).
\end{example}

More examples of nonlinearities can be found in \cite{BF5}--\cite{BF9}, and
many facts from the theory of the NLS equations can be found in \cite%
{Cazenave03}, \cite{Sulem}. The NLS equations with logarithmic nonlinearity
are studied in \cite{CazenaveHaraux80}, \cite{Cazenave83}, \cite{Cazenave03}.

Below we find conditions on the external fields $\varphi $ and $\mathbf{A}$
which allow the wave-corpuscle of the form (\ref{psil00}) to preserve
exactly its shape $\left\vert \psi \right\vert $ in accelerated motion
governed by the NLS equation along a trajectory $\mathbf{r}\left( t\right) $%
. In particular, Newton's law of motion emerges as the necessary condition
for such a motion.

\subsection{Criterion for shape preservation}

It is convenient to rewrite the NLS equation (\ref{NLS}) in the moving frame
using a change of variables $\mathbf{x}-\mathbf{r}\left( t\right) =\mathbf{y}
$. In $y$-coordinates the NLS equation (\ref{NLS}) takes the form 
\begin{equation}
\chi \mathrm{i}\partial _{t}\psi -\chi \mathrm{i}\mathbf{v}\cdot \nabla \psi
-q\varphi \psi =\frac{\chi ^{2}}{2m}\left[ -\left( \nabla -\frac{\mathrm{i}q%
}{\chi c}\mathbf{A}\right) ^{2}\psi +G^{\prime }\left( \psi ^{\ast }\psi
\right) \psi \right] ,  \label{Schy}
\end{equation}%
where $\mathbf{v}$ is the center velocity: 
\begin{equation}
\mathbf{v}\left( t\right) =\partial _{t}\mathbf{r}\left( t\right) \mathbf{.}
\label{velp}
\end{equation}%
We substitute (\ref{psil00}) in (\ref{Schy}), and, canceling the factor $%
\mathrm{e}^{\mathrm{i}S},\ $ we arrive to the following equivalent equation
for the phase $S$: 
\begin{gather}
-\chi \mathring{\psi}\partial _{t}S-\chi \mathrm{i}\mathbf{v}\cdot \nabla 
\mathring{\psi}+\chi \mathbf{v}\cdot \nabla S\mathring{\psi}-q\varphi 
\mathring{\psi}=  \label{Schy1} \\
=\frac{\chi ^{2}}{2m}\left[ -\left( \nabla -\frac{\mathrm{i}q}{\chi c}%
\mathbf{A}+\mathrm{i}\nabla S\right) ^{2}\mathring{\psi}+G^{\prime }\left( 
\mathring{\psi}^{2}\right) \mathring{\psi}\right] .  \notag
\end{gather}%
Expanding $\left( \nabla -\frac{\mathrm{i}q\mathbf{A}}{\chi c}+\mathrm{i}%
\nabla S\right) ^{2}\mathring{\psi}$ and using (\ref{nop40}) to exclude the
nonlinearity $G^{\prime }$,\ we rewrite (\ref{Schy1}) in the form 
\begin{gather}
-\mathring{\psi}\chi \partial _{t}S-\chi \mathrm{i}\mathbf{v}\cdot \nabla 
\mathring{\psi}+\chi \mathbf{v}\cdot \nabla S\mathring{\psi}-q\varphi 
\mathring{\psi}=  \label{Schy2} \\
=\frac{\chi ^{2}}{2m}\left[ 2\left( \frac{\mathrm{i}q}{\chi c}\mathbf{A}-%
\mathrm{i}\nabla S\right) \nabla \mathring{\psi}-\mathring{\psi}\nabla \cdot
\left( \mathrm{i}\nabla S-\frac{\mathrm{i}q}{\chi c}\mathbf{A}\right)
+\left( \nabla S-\frac{q}{\chi c}\mathbf{A}\right) ^{2}\mathring{\psi}\right]
.  \notag
\end{gather}%
Every term in the above equation has either the factor $\mathring{\psi}$ or $%
\nabla \mathring{\psi}$. Collecting terms at $\nabla \mathring{\psi}$ \ and $%
\mathring{\psi}$ respectively, we obtain two equations: 
\begin{equation}
\left( \chi \nabla S-\frac{q}{c}\mathbf{A}-m\mathbf{v}\right) \cdot \nabla 
\mathring{\psi}=0,  \label{grp0}
\end{equation}%
\begin{equation}
-\chi \partial _{t}S+\chi \mathbf{v}\cdot \nabla S-q\varphi =-\mathrm{i}%
\frac{\chi }{2m}\nabla \cdot \left( \chi \nabla S-\frac{q}{c}\mathbf{A}%
\right) +\frac{1}{2m}\left( \chi \nabla S-\frac{q}{c}\mathbf{A}\right) ^{2}.
\label{p1}
\end{equation}

Now we would like to find conditions on the potentials $\varphi ,\mathbf{A}$
under which there exists the phase $S$ which satisfies the above system of
equations (\ref{grp0}), (\ref{p1}). Such a phase provides for the
wave-corpuscle solution to the NLS equation (\ref{Schy}). We begin with
equation (\ref{grp0}) first. Since 
\begin{equation*}
\nabla \mathring{\psi}=\mathring{\psi}^{\prime }\left( \left\vert \mathbf{y}%
\right\vert \right) \frac{\mathbf{y}}{\left\vert \mathbf{y}\right\vert },
\end{equation*}%
we conclude that (\ref{grp0}) is equivalent to the following \ orthogonality
condition:%
\begin{equation}
\left( \frac{q}{c}\mathbf{A}-\chi \nabla S+m\mathbf{v}\right) \cdot \mathbf{y%
}=\mathbf{0.}  \label{p1o}
\end{equation}%
To treat the above equation we use the concept of a \emph{sphere-tangent
field}. We call a vector field $\mathbf{\breve{V}}\left( \mathbf{y}\right) $
sphere-tangent if it satisfies the orthogonality condition 
\begin{equation}
\mathbf{\breve{V}}\left( \mathbf{y}\right) \cdot \mathbf{y}=0\text{\quad for
all\quad }\mathbf{y.}  \label{ahat0}
\end{equation}%
Obviously, a sphere-tangent vector field is tangent to spheres centered at
the origin. \ Any vector field $\mathbf{V}\left( \mathbf{y}\right) $ \ of
class $C^{1}\left( \mathbb{R}^{3}\right) $ can be uniquely splitted into a
potential field $\nabla P$\ and a sphere-tangent field $\mathbf{\breve{V}}$
which satisfies the orthogonality condition (\ref{ahat0}), namely%
\begin{equation}
\mathbf{V}=\nabla P+\mathbf{\breve{V}}  \label{vsplit0}
\end{equation}%
(see Section \ref{Svsplit} \ for details and explicit formulas). We split
the vector potential $\mathbf{A}$ into its sphere-tangent and gradient
parts, namely 
\begin{equation}
\mathbf{A}\left( t,\mathbf{y}\right) =\mathbf{\breve{A}}\left( t,\mathbf{y}%
\right) +\mathbf{A}_{\nabla }\left( t,\mathbf{y}\right)  \label{Aex}
\end{equation}%
with 
\begin{gather}
\mathbf{\breve{A}}\left( t,\mathbf{y}\right) \cdot \mathbf{y}=0,
\label{Asum} \\
\mathbf{A}_{\nabla }\left( t,\mathbf{y}\right) =\nabla P\left( t,\mathbf{y}%
\right) .  \label{Asum1}
\end{gather}%
Equation (\ref{p1o}) implies that the field $\frac{q}{c}\mathbf{A}-\nabla S+m%
\mathbf{v}$ is purely sphere-tangent, therefore its gradient part is zero
and we conclude that the following equation is equivalent to (\ref{p1o}): 
\begin{equation}
\chi \nabla S=m\mathbf{v}+\frac{q}{c}\mathbf{A}_{\nabla }.  \label{grp1s}
\end{equation}%
Now let us consider equation (\ref{p1}). Expressing $\nabla S$ from (\ref%
{grp1s}) we write (\ref{p1}) in the form 
\begin{equation}
-\chi \partial _{t}S+\mathbf{v}\cdot \left( m\mathbf{v}+\frac{q}{c}\mathbf{A}%
_{\nabla }\right) -q\varphi =\mathrm{i}\frac{\chi }{2m}\frac{q}{c}\nabla
\cdot \mathbf{\breve{A}}+\frac{1}{2m}\left( m\mathbf{v}-\frac{q}{c}\mathbf{%
\breve{A}}\right) ^{2}.  \label{p1a}
\end{equation}%
Taking the imaginary part of (\ref{p1a}), we see that the sphere-tangent
part of the vector potential must satisfy\ the following zero divergency
condition: 
\begin{equation}
\nabla \cdot \mathbf{\breve{A}}=0.  \label{divahat}
\end{equation}%
The real part of (\ref{p1a}) yields equation 
\begin{equation}
\chi \partial _{t}S=m\mathbf{v}^{2}+\frac{q}{c}\mathbf{v}\cdot \mathbf{A}%
_{\nabla }-\frac{1}{2m}\left( m\mathbf{v}-\frac{q}{c}\mathbf{\breve{A}}%
\right) ^{2}-q\varphi .  \label{rep1a}
\end{equation}%
Hence, (\ref{rep1a}) and (\ref{grp1s}) take the form 
\begin{gather}
\chi \partial _{t}S=\frac{1}{2}m\mathbf{v}^{2}+\frac{q}{c}\mathbf{v}\cdot 
\mathbf{A}-\frac{1}{2m}\frac{q^{2}}{c^{2}}\mathbf{\breve{A}}^{2}-q\varphi ,
\label{g0} \\
\chi \nabla S=m\mathbf{v}+\frac{q}{c}\mathbf{A}_{\nabla }.  \label{g1}
\end{gather}%
The above relations give an expression for the 4-gradient of $\chi S$. The
right-hand sides of (\ref{g0}), (\ref{g1}) can be considered as the
coefficients of a differential 1-form. Hence, if the phase $S$ which solves (%
\ref{Schy2}) exists, the form must be exact, and consequently it must be
closed. Conversely, according to Poincare's lemma, the form \ on $\mathbb{R}%
^{4}$ is exact if it is closed \ on $\mathbb{R}^{4}$, and then the phase $S$
can be found by the integration of the differential form. Since the
right-hand side of (\ref{g1}) is the gradient, the form is closed if the
following condition is satisfied: 
\begin{equation}
\nabla \left( \frac{1}{2}m\mathbf{v}^{2}+\frac{q}{c}\mathbf{v}\cdot \mathbf{A%
}-\frac{1}{2m}\frac{q^{2}}{c^{2}}\mathbf{\breve{A}}^{2}-q\varphi \right)
=\partial _{t}\left( m\mathbf{v}+\frac{q}{c}\mathbf{A}_{\nabla }\right) .
\label{compdif}
\end{equation}%
This equation can be interpreted as a balance of forces which allows to
exactly preserve the shape of the wave-corpuscle. \ \emph{Equation (\ref%
{compdif}) together with condition (\ref{divahat}) constitutes the criterion
for the preservation of the shape }$\left\vert \psi \right\vert $\emph{.}

\subsection{Trajectory and phase of a wave-corpuscle}

If equation (\ref{compdif}) holds, the phase function $S$ can be found by
integrating the exact 1-form:%
\begin{equation}
S\left( t,\mathbf{y}\right) =\frac{1}{\chi }\int_{\Gamma }\left( \frac{1}{2}m%
\mathbf{v}^{2}+\frac{q}{c}\mathbf{v}\cdot \mathbf{A}-\frac{1}{2m}\frac{q^{2}%
}{c^{2}}\mathbf{\breve{A}}^{2}-q\varphi \right) \mathrm{d}t+\left( m\mathbf{v%
}+\frac{q}{c}\mathbf{A}_{\nabla }\right) \cdot \mathrm{d}\mathbf{y}
\label{SGam0}
\end{equation}%
where $\Gamma $ is a curve in time-space connecting $\left( 0,\mathbf{0}%
\right) $ with $\left( t,\mathbf{y}\right) $. Since equation (\ref{compdif})
is fulfilled, \ the integral does not depend on the curve. In particular, we
take as $\Gamma =\Gamma _{0}$ a curve formed by two straight-line segments:
the first segment from $\left( 0,\mathbf{0}\right) $ to $\left( t,\mathbf{0}%
\right) $ and the second from $\left( t,\mathbf{0}\right) $ to $\left( t,%
\mathbf{y}\right) $. That yields 
\begin{gather}
S\left( t,\mathbf{y}\right) =\frac{1}{\chi }\int_{\Gamma _{0}}\left( \frac{1%
}{2}m\mathbf{v}^{2}+\frac{q}{c}\mathbf{v}\cdot \mathbf{A}-\frac{1}{2m}\frac{%
q^{2}}{c^{2}}\mathbf{\breve{A}}^{2}-q\varphi \right) \mathrm{d}t+\left( m%
\mathbf{v}+\frac{q}{c}\mathbf{A}_{\nabla }\right) \cdot \mathrm{d}\mathbf{y}
\label{SGam} \\
=m\mathbf{v}\cdot \mathbf{y}+\frac{q}{c}\mathbf{y}\cdot \mathbf{A}\left( t,%
\mathbf{0}\right) +s_{\mathrm{p}}\left( t\right) +s_{\mathrm{p2}}\left( t,%
\mathbf{y}\right) ,  \notag
\end{gather}%
where%
\begin{gather}
s_{\mathrm{p}}\left( t\right) =\frac{1}{\chi }\int_{0}^{t}\left( \frac{1}{2}m%
\mathbf{v}^{2}\left( t\right) +\frac{q}{\mathrm{c}}\mathbf{v}\cdot \mathbf{A}%
\left( t,0\right) -q\varphi \left( t,0\right) \right) \mathrm{d}t,
\label{SGam1} \\
s_{\mathrm{p2}}\left( t,\mathbf{y}\right) =\frac{q}{\chi \mathrm{c}}%
\int_{0}^{1}\mathbf{y}\cdot \left( \mathbf{A}\left( t,s\mathbf{y}\right) -%
\mathbf{A}\left( t,\mathbf{0}\right) \right) \mathrm{d}s,  \label{SGam2}
\end{gather}%
and $s_{\mathrm{p2}}\left( t,\mathbf{y}\right) $\ is at least quadratic in $%
\mathbf{y}$. In the above derivation we used that $\mathbf{\breve{A}}\left(
t,\mathbf{0}\right) =0$, $\mathbf{y}\cdot \mathbf{\breve{A}}\left( t,\mathbf{%
y}\right) =0$, and$\ \ \mathbf{A}_{\nabla }\left( t,\mathbf{0}\right) =%
\mathbf{A}\left( t,\mathbf{0}\right) $.

Singling out the linear part of the phase we can write above formulas in the
form 
\begin{equation}
S\left( t,\mathbf{y}\right) =\frac{1}{\chi }m\mathbf{\tilde{v}}\cdot \mathbf{%
y}+s_{\mathrm{p}}\left( t\right) +s_{\mathrm{p2}}\left( t,\mathbf{y}\right) ,
\label{Sgamv}
\end{equation}%
where%
\begin{equation}
\mathbf{\tilde{v}=v}+\frac{q}{mc}\mathbf{A}\left( t,\mathbf{0}\right) ,
\label{vhat0}
\end{equation}%
\begin{equation}
s_{\mathrm{p}}\left( t\right) =\frac{1}{\chi }\int_{0}^{t}\left( \frac{1}{2}m%
\mathbf{\tilde{v}}^{2}\left( t\right) -\frac{q^{2}}{2mc^{2}}\left( \mathbf{A}%
\left( t,0\right) \right) ^{2}-q\varphi \left( t,0\right) \right) \mathrm{d}%
t.  \label{sp1}
\end{equation}%
The integrability condition (\ref{compdif}) can be written now in the form 
\begin{equation}
m\partial _{t}^{2}\mathbf{r}=\frac{q}{c}\nabla \left( \mathbf{v}\cdot 
\mathbf{A}\right) -\frac{q^{2}}{c^{2}}\frac{1}{2m}\nabla \mathbf{\breve{A}}%
^{2}-q\nabla \varphi -\frac{q}{c}\partial _{t}\mathbf{A}_{\nabla }.
\label{intcon1}
\end{equation}%
The above equation together with (\ref{divahat}) is a system of equations
which involves the EM potentials $\varphi ,\mathbf{A}$ and the corpuscle
center trajectory $\mathbf{r}\left( t\right) $. Its fulfillment guarantees
that the wave-corpuscle preserves its shape in the dynamics described by the
NLS equation. We refer to equations (\ref{intcon1}), (\ref{divahat}) \emph{%
non-relativistic wave-corpuscle dynamic balance conditions}. \ Obviously the
conditions do not depend on the nonlinearity $G$.

By setting \ $\mathbf{y}=0$ in (\ref{intcon1}) and taking into account that $%
\mathbf{\breve{A}}_{\mathrm{ex}}\left( t,0\right) =0$ we obtain the \emph{%
point balance condition:}%
\begin{equation}
m\partial _{t}^{2}\mathbf{r}=\frac{q}{c}\nabla \left( \mathbf{v}\cdot 
\mathbf{A}\right) \left( t,0\right) -q\nabla \varphi \left( t,0\right) -%
\frac{q}{c}\partial _{t}\mathbf{A}\left( t,0\right) .  \label{eq0}
\end{equation}%
To interpret this condition, \ we note that $\mathbf{y}=0$ in the moving
frame in the above equation \ corresponds to $\mathbf{x}=\mathbf{r}$ \ in
the resting frame\ and 
\begin{equation}
\partial _{t}\mathbf{A}\left( t,\mathbf{x}\right) _{\mathbf{x=r}}=\partial
_{t}\mathbf{A}\left( t,\mathbf{y}\right) _{\mathbf{y=0}}-\mathbf{v}\cdot
\nabla \mathbf{A}\left( t,\mathbf{y}\right) _{\mathbf{y=0}},  \label{dtmov}
\end{equation}%
hence the point balance condition (\ref{eq0}) takes in the resting frame the
form \ 
\begin{equation}
m\partial _{t}^{2}\mathbf{r}=\frac{q}{c}\nabla \left( \mathbf{v}\cdot 
\mathbf{A}\right) \left( t,\mathbf{r}\right) -q\nabla \varphi \left( t,%
\mathbf{r}\right) -\frac{q}{c}\partial _{t}\mathbf{A}\left( t,\mathbf{r}%
\right) -\frac{q}{c}\mathbf{v}\cdot \nabla \mathbf{A}\left( t,\mathbf{r}%
\right) .  \label{eq01}
\end{equation}%
Recall that the expression for the Lorentz force in terms of the EM
potentials is given by the formula 
\begin{equation*}
\mathbf{f}_{\mathrm{Lor}}=-q\nabla \varphi -\frac{q}{c}\partial _{t}\mathbf{%
\mathbf{A}}+\frac{q}{c}\nabla \left( \mathbf{v}\cdot \mathbf{\mathbf{A}}%
\right) -\frac{q}{c}\left( \mathbf{v}\cdot \nabla \right) \mathbf{\mathbf{A,}%
}
\end{equation*}%
therefore the right-hand side of (\ref{eq01}) coincides with the Lorentz
force (\ref{fLor0}). Hence, \ the point balance condition (\ref{eq0}) in $%
\mathbf{x}$-coordinates has the form 
\begin{gather}
m\partial _{t}^{2}\mathbf{r}=\mathbf{f}_{\mathrm{Lor}}\left( t,\mathbf{r}%
\right) \mathbf{,}  \label{flor} \\
\mathbf{f}_{\mathrm{Lor}}\left( t,\mathbf{r}\right) =q\mathbf{E}\left( t,%
\mathbf{r}\right) +\frac{q}{c}\mathbf{v}\times \mathbf{B}\left( t,\mathbf{r}%
\right)  \label{flor1}
\end{gather}%
where EM fields $\mathbf{E}$ and $\mathbf{B}$ are given by (\ref{maxw3a}). 
\emph{Hence the point balance condition coincides with Newton's law of
motion (\ref{Newt0}) of a charged point \ subjected to the Lorentz force }$%
\mathbf{f}_{\mathrm{Lor}}$.

We always assume that the trajectory $\mathbf{r}\left( t\right) $ satisfies
the point balance condition (\ref{flor}), therefore the integrability
condition (\ref{intcon1}) after taking into account \ (\ref{eq0}) \ can be
written in the form%
\begin{gather}
\frac{q}{c}\left( \nabla \left( \mathbf{v}\cdot \mathbf{A}\left( t,\mathbf{y}%
\right) -\nabla \left( \mathbf{v}\cdot \mathbf{A}\right) \left( t,0\right)
\right) \right) -\frac{q^{2}}{c^{2}}\frac{1}{2m}\nabla \mathbf{\breve{A}}%
^{2}-q\left( \nabla \varphi \left( t,\mathbf{y}\right) -\nabla \varphi
\left( t,0\right) \right)  \label{intcon2} \\
-\frac{q}{c}\partial _{t}\left( \mathbf{A}_{\nabla }\left( t,\mathbf{y}%
\right) -\mathbf{A}_{\nabla }\left( t,\mathbf{0}\right) \right) =\mathbf{0} 
\notag
\end{gather}%
where the left hand side \ explicitly vanishes for spatially constant fields 
$\mathbf{A}$ and constant $\nabla \varphi $.

Now let us discuss the universality of the non-relativistic dynamic balance
conditions. \ The conditions (\ref{divahat}), (\ref{intcon2}) of
preservation of the shape $\left\vert \psi \right\vert $ are \emph{universal}%
, namely they do not depend on the nonlinearity $G\ $or on the form factor $%
\mathring{\psi}$. Recall that when deriving the conditions we set
coefficients at $\nabla \mathring{\psi}$ and $\mathring{\psi}$ in (\ref%
{Schy2}) to be zero. We would like to show that this is a necessary
requirement\ \ for the conditions to be universal.

Equation (\ref{Schy2}) has the form 
\begin{equation*}
Q_{0}\mathring{\psi}\left( r\right) +\mathring{\psi}^{\prime }\left(
r\right) Q_{1}\cdot \mathbf{y/}r=0,
\end{equation*}%
where $\mathring{\psi}\left( r\right) $ is a form factor, and coefficients $%
Q_{0},Q_{1}$ obviously do not depend on $\mathring{\psi}$. If $Q_{1}\cdot 
\mathbf{y}\ \ $is not identically zero, \ we can separate the variables: \ 
\begin{equation*}
\frac{Q_{0}}{rQ_{1}\cdot \mathbf{y}}=-\partial _{r}\ln \mathring{\psi}\left(
r\right) ,
\end{equation*}%
where the left-hand side is $\mathring{\psi}$-independent and the right-hand
side depends on $\mathring{\psi}$. It is impossible, and hence $Q_{1}\cdot 
\mathbf{y}$ must be zero as well $Q_{0}$, leading to equations (\ref{grp0}),
(\ref{p1}).

\subsection{Wave-corpuscles in the EM field\label{Thexactq}}

The simplest example of fields $\varphi ,\mathbf{A}$ for which dynamic
balance conditions are fulfilled are spatially constant $\varphi \left(
t\right) ,\mathbf{A}\left( t\right) $; equations (\ref{divahat}), (\ref%
{intcon2}) are obviously fulfilled. \ Now we construct a more general
example of the fulfillment of the condition (\ref{intcon2}). In this example
we prescribe arbitrary EM potentials which are linear in $\mathbf{x}$, take
a trajectory which satisfies Newton's law (\ref{flor}), and assume that
second and higher order components of the EM potentials expansion at $%
\mathbf{r}\left( t\right) $ must satisfy certain restrictions. Namely, we
assume that $\mathbf{A}$ involves a linear in $\mathbf{y}=\mathbf{x}-\mathbf{%
r}$ part which can be given by an arbitrary $3\times 3$ matrix $\mathbf{A}%
_{1}\left( t\right) $ with time-dependent elements: 
\begin{equation}
\mathbf{A}_{1}\left( t,\mathbf{y}\right) =\mathbf{A}_{1}\left( t\right) 
\mathbf{y}=\mathbf{y}\cdot \nabla \mathbf{A}\left( t,\mathbf{0}\right) .
\label{aexarb}
\end{equation}%
We assume that quadratic and higher order components of the sphere-tangent
part of $\mathbf{A}$ are set to zero, but allow an arbitrary potential part 
\begin{equation}
\mathbf{A}\left( t,\mathbf{y}\right) =\mathbf{A}\left( t,\mathbf{0}\right) +%
\mathbf{A}_{1}\left( t,\mathbf{y}\right) +\mathbf{A}_{\nabla 2}\left( t,%
\mathbf{y}\right) ,  \label{aexlin}
\end{equation}%
where $\mathbf{A}_{\nabla 2}$ has at least the second order zero at the
origin and is potential:%
\begin{equation*}
\mathbf{A}_{\nabla 2}\left( t,\mathbf{y}\right) =\nabla P_{3}
\end{equation*}%
with an arbitrary $P_{3}$. For such a field, its potential and
sphere-tangent parts are given respectively by formulas 
\begin{gather}
\mathbf{A}_{\nabla }\left( t,\mathbf{y}\right) =\nabla P_{1}+\nabla
P_{2}+\nabla P_{3},  \label{aex1} \\
\mathbf{\breve{A}}\left( t,\mathbf{y}\right) =\frac{1}{2}\mathbf{A}%
_{1}\left( t\right) \mathbf{y}-\frac{1}{2}\mathbf{A}_{1}^{\mathrm{T}}\left(
t\right) \mathbf{y},  \label{aex1a}
\end{gather}%
where%
\begin{equation}
P_{1}=\mathbf{y}\cdot \mathbf{A}\left( t,\mathbf{0}\right) ,\qquad
P_{2}\left( t,\mathbf{y}\right) =\frac{1}{2}\left( \mathbf{y\cdot A}%
_{1}\left( t\right) \mathbf{y}\right) ,  \label{p1p2}
\end{equation}%
$\mathbf{A}_{1}^{\mathrm{T}}$ stands for $\mathbf{A}_{1}$\ transposed, and $%
P_{3}$ is a function with zero of at least the third degree at the origin.
Note that an action of an anti-symmetric matrix can be written using the
cross product, therefore (\ref{aex1a}) can be written as follows: 
\begin{equation}
\mathbf{\breve{A}}\left( t,\mathbf{y}\right) =\frac{1}{2}\mathbf{\breve{B}}%
\left( t\right) \times \mathbf{y,}  \label{aex1b}
\end{equation}%
where $\mathbf{\breve{B}}=\nabla \times \mathbf{\breve{A}}$. The electric
potential $\ \varphi $ has the form \ 
\begin{equation}
\varphi =\varphi \left( t,\mathbf{0}\right) +\mathbf{y}\cdot \nabla \varphi
\left( t,\mathbf{0}\right) +\varphi _{2}\left( t,\mathbf{y}\right) ,
\label{fiex2}
\end{equation}%
where $\varphi \left( t,\mathbf{0}\right) $ and $\nabla \varphi \left( t,%
\mathbf{0}\right) $ are given continuously differentiable functions of $t,$
and $\varphi _{2}\left( t,\mathbf{y}\right) $ \ has order two or higher in $%
\mathbf{y}$ \ and is subject to the condition (\ref{fiex2a}) we formulate
below.

Let us verify the conditions which guarantee that the wave-corpuscle is an
exact solution. Notice that condition (\ref{divahat}) for $\mathbf{\breve{A}}
$ defined by (\ref{aex1a}) is fulfilled. Hence, it is sufficient to satisfy (%
\ref{intcon2}), which takes the form%
\begin{equation}
\frac{q}{c}\nabla \left( \mathbf{v}\cdot \nabla P_{3}\right) -\frac{q^{2}}{%
c^{2}}\frac{1}{2m}\nabla \mathbf{\breve{A}}^{2}-q\nabla \varphi _{\mathrm{2}%
}\left( t,\mathbf{y}\right) -\frac{q}{c}\partial _{t}\left( \nabla
P_{2}\left( t,\mathbf{y}\right) +\nabla P_{3}\left( t,\mathbf{y}\right)
\right) =\mathbf{0.}  \label{grp}
\end{equation}%
To satisfy (\ref{grp}), we set 
\begin{equation}
\varphi _{2}\left( t,\mathbf{y}\right) =-\frac{q}{c^{2}}\frac{1}{2m}\mathbf{%
\breve{A}}^{2}-\frac{1}{c}\partial _{t}\left( P_{2}\left( t,\mathbf{y}%
\right) +P_{3}\left( t,\mathbf{y}\right) \right) +\frac{1}{c}\mathbf{v}\cdot
\nabla P_{3}.  \label{fiex2a}
\end{equation}%
We want now to determine the phase $S$ of the wave-corpuscle using (\ref%
{SGam1}), (\ref{SGam2}). If $P_{3}$ is homogenious of third degree, we
obtain 
\begin{equation}
s_{\mathrm{p}}\left( t\right) =\frac{1}{\chi }\int_{0}^{t}\left( \frac{1}{2}m%
\mathbf{v}^{2}\left( t\right) +\frac{q}{c}\mathbf{v}\cdot \mathbf{A}\left(
t,0\right) -q\varphi \left( t,0\right) \right) \mathrm{d}t,  \label{sp2}
\end{equation}%
\begin{gather}
s_{\mathrm{p2}}\left( t,\mathbf{y}\right) =\frac{q}{c}\frac{1}{\chi }%
\int_{0}^{1}\mathbf{y}\cdot \mathbf{\mathbf{A}}_{1}\left( t,\mathbf{y}%
\right) sds+\frac{q}{c}\frac{1}{\chi }\int_{0}^{1}\mathbf{y}\cdot \mathbf{A}%
_{\nabla 2}\left( t,\mathbf{y}\right) s^{2}\mathrm{d}s  \label{sp2a} \\
=\frac{q}{2c}\frac{1}{\chi }\mathbf{y}\cdot \mathbf{\mathbf{A}}_{1}\left( t,%
\mathbf{y}\right) +\frac{q}{3c}\frac{1}{\chi }\mathbf{y}\cdot \mathbf{A}%
_{\nabla 2}\left( t,\mathbf{y}\right) .  \notag
\end{gather}%
In this case the phase function of the wave-corpuscle involves a term $s_{%
\mathrm{p2}}\left( t,\mathbf{y}\right) $.

Formula (\ref{psil00}) takes the form 
\begin{equation}
\psi \left( t,\mathbf{x}\right) =\mathrm{e}^{\mathrm{i}S}\mathring{\psi}%
\left( \left\vert \mathbf{x}-\mathbf{r}\left( t\right) \right\vert \right) ,
\label{psil0a}
\end{equation}%
\begin{equation}
S\left( t,\mathbf{y}\right) =m\frac{1}{\chi }\mathbf{v}\cdot \mathbf{y}+%
\frac{q}{c}\frac{1}{\chi }\mathbf{y}\cdot \mathbf{A}\left( t,\mathbf{0}%
\right) +s_{\mathrm{p}}\left( t\right) +s_{\mathrm{p2}}\left( t,\mathbf{y}%
\right) .  \label{Sdef}
\end{equation}%
where $\;\mathbf{y}=\mathbf{x}-\mathbf{r}\left( t\right) ,\mathbf{v}%
=\partial _{t}\mathbf{r}$.

If \ the external magnetic field satisfies \ the following anti-symmetry
conditions 
\begin{gather}
\mathbf{y}\cdot \mathbf{\mathbf{A}}_{1}\left( t,\mathbf{y}\right) =0,
\label{antmag} \\
\mathbf{y}\cdot \mathbf{A}_{\nabla 2}\left( t,\mathbf{y}\right) =0  \notag
\end{gather}%
then $s_{\mathrm{p2}}=0$, and the phase function is linear in $\mathbf{y}$.
\ 

The above calculations can be summarized in the following statement

\begin{theorem}
\label{Twcem} \ Let the potentials $\varphi ,\mathbf{A}$ have the form (\ref%
{fiex2}), (\ref{aexarb}), (\ref{aexlin}) where $\varphi \left( t,\mathbf{0}%
\right) $ and $\nabla \varphi \left( t,\mathbf{0}\right) $ are given
continuously differentiable functions of $t$. Suppose also that $\mathbf{A}%
\left( t,\mathbf{0}\right) $ is a given continuously differentiable function
of $t$, $\mathbf{A}_{1}\left( t\right) $ is an arbitrary $3\times 3$ matrix
which is continuously differentiably depends on $t$, $\nabla P_{3}$ is a
continuously differentiable function of $t$ and $\mathbf{y}$. Let the
quadratic part $\varphi _{2}$ of the potential $\varphi $ satisfy (\ref%
{fiex2a}). Let also trajectory $\mathbf{r}\left( t\right) $ satisfies
Newton's equation (\ref{flor})-(\ref{flor1}). Then the wave-corpuscle
defined by formula (\ref{psil0a})-(\ref{Sdef}) is a solution to NLS equation
(\ref{NLS}).
\end{theorem}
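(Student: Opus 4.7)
The plan is to verify that the hypotheses of the theorem put us exactly in the setting where the dynamic balance conditions \eqref{divahat} and \eqref{intcon2} derived earlier in Section~\ref{swcpres} hold; once that is done, the general construction of the wave-corpuscle phase from a closed 1-form gives an exact solution of the NLS equation, and the formula \eqref{Sdef} is obtained by evaluating the line integral \eqref{SGam1}--\eqref{SGam2} for the specific potentials in the hypotheses.

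First I would check the zero-divergence condition \eqref{divahat}. With $\mathbf{A}$ of the form \eqref{aexlin} the splitting \eqref{aex1}--\eqref{aex1a} shows that $\mathbf{\breve{A}}$ is the antisymmetric linear part $\tfrac{1}{2}(\mathbf{A}_1(t)-\mathbf{A}_1^{\mathrm{T}}(t))\mathbf{y}$; its divergence is the trace of an antisymmetric matrix, hence zero. So \eqref{divahat} is automatic from the structural assumption that the sphere-tangent part of $\mathbf{A}$ is purely linear in $\mathbf{y}$.

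Next I would verify the integrability condition \eqref{intcon2}. Because the trajectory $\mathbf{r}(t)$ is assumed to satisfy Newton's law \eqref{flor}--\eqref{flor1}, we can subtract the point balance identity \eqref{eq0} from the full condition \eqref{intcon1}; this is exactly the step that produced \eqref{intcon2} in the text, and for the potentials in the hypothesis it reduces to \eqref{grp} since the linear in $\mathbf{y}$ parts of $\nabla(\mathbf{v}\cdot\mathbf{A})$, $\nabla\varphi$, and $\partial_t\mathbf{A}_\nabla$ cancel against their values at $\mathbf{y}=0$. The definition \eqref{fiex2a} of $\varphi_2$ is precisely designed so that $-q\nabla\varphi_2$ reproduces the remaining three terms in \eqref{grp}, and hence \eqref{grp}, and therefore \eqref{intcon2}, holds.

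With both balance conditions in force, the derivation from \eqref{Schy2} through \eqref{grp0}--\eqref{p1} and then \eqref{g0}--\eqref{g1} given earlier shows that any $S$ obtained by integrating the closed 1-form \eqref{SGam0} along any path $\Gamma$ yields a phase making $\psi=e^{\mathrm{i}S}\mathring{\psi}(|\mathbf{y}|)$ an exact solution of \eqref{Schy}, equivalent to \eqref{NLS}. It remains to check that the explicit formula \eqref{Sdef} matches the line integral \eqref{SGam} along the two-segment path $\Gamma_0$. This is routine: on the time-segment, $\mathbf{y}=\mathbf{0}$ makes $\mathbf{\breve{A}}(t,\mathbf{0})=0$ and $\mathbf{A}_\nabla(t,\mathbf{0})=\mathbf{A}(t,\mathbf{0})$, giving \eqref{sp2}; on the spatial segment, inserting $\mathbf{A}(t,s\mathbf{y})-\mathbf{A}(t,\mathbf{0})=s\mathbf{A}_1(t)\mathbf{y}+\mathbf{A}_{\nabla 2}(t,s\mathbf{y})$ and using that $P_3$ is homogeneous of degree three so that $\mathbf{A}_{\nabla 2}(t,s\mathbf{y})=s^2\mathbf{A}_{\nabla 2}(t,\mathbf{y})$ yields $\int_0^1 s\,\mathrm{d}s=\tfrac12$ and $\int_0^1 s^2\,\mathrm{d}s=\tfrac13$, producing \eqref{sp2a} and hence \eqref{Sdef}.

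The only genuine obstacle is bookkeeping: making sure every ingredient in \eqref{intcon2} is split consistently into sphere-tangent versus gradient parts, and that $\varphi_2$ defined by \eqref{fiex2a} is indeed a bona fide scalar function (its gradient must equal the right-hand side of \eqref{grp} after moving $-q\nabla\varphi_2$ to the other side, which is itself an integrability claim). This follows automatically because each summand on the right of \eqref{fiex2a} is a scalar in $\mathbf{y}$, so no further compatibility has to be verified; after that, the proof is a direct assembly of results already in Section~\ref{swcpres}.
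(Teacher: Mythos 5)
Your proposal is correct and follows essentially the same route as the paper, whose proof of Theorem \ref{Twcem} is precisely the preceding derivation: the divergence condition (\ref{divahat}) holds because $\mathbf{\breve{A}}$ is the antisymmetric linear part (\ref{aex1a}), the integrability condition (\ref{intcon2}) reduces to (\ref{grp}) and is enforced by the choice (\ref{fiex2a}) of $\varphi_{2}$ once Newton's law supplies the point balance (\ref{eq0}), and the phase is recovered by integrating the closed 1-form along $\Gamma_{0}$ to obtain (\ref{sp2})--(\ref{sp2a}) and (\ref{Sdef}). The only cosmetic slip is that it is the constant-in-$\mathbf{y}$ gradients of the linear parts of the potentials (not the linear parts themselves) that cancel against their values at $\mathbf{y}=\mathbf{0}$; this does not affect the argument.
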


\begin{remark}
\label{R:unia}The above construction does not depend on the nonlinearity $%
G^{\prime }=G_{a}^{\prime }$ as long as (\ref{nop40}) is satisfied. It is
also uniform with respect to $a>0$, and the dependence on $a$ in (\ref%
{psil0a}) is only through $\mathring{\psi}\left( |\mathbf{x}-\mathbf{r}%
|\right) =a^{-3/2}\mathring{\psi}_{1}\left( a^{-1}|\mathbf{x}-\mathbf{r}%
|\right) $. Obviously, if $\psi \left( t,\mathbf{x}\right) $ is defined by (%
\ref{psil00}) then $|\psi \left( t,\mathbf{x}\right) |^{2}\rightarrow \delta
\left( \mathbf{x}-\mathbf{r}\right) $ as $a\rightarrow 0$.
\end{remark}

\begin{remark}
The form (\ref{psil00}) of exact solutions is the same as the WKB ansatz in
the quasi-classical approach, \cite{MaslovFedorjuk}. The trajectories of the
charge center coincide with trajectories that can be found by applying
well-known quasiclassical asymptotics to solutions of (\ref{NLS}) if one
neglects the nonlinearity. Note though that there are two important effects
of the nonlinearity not presented in the standard quasiclassical approach.
First of all, due to the nonlinearity the charge preserves its shape in the
course of evolution on unbounded time intervals whereas in the linear model
any wavepacket disperses over time. Second of all, the quasiclassical
asymptotic expansions produce infinite asymptotic series which provide for a
formal solution, whereas the properly introduced nonlinearity as in (\ref%
{nop40}), (\ref{intpsa}) allows one to obtain an exact solution. For a
treatment of mathematical aspects of the approach to nonlinear wave
mechanics based on the WKB asymptotic expansions we refer the reader to \cite%
{MaslovFedorjuk}, \cite{Komech05} and references therein.
\end{remark}

\begin{remark}
We can use in the definition of a wave-corpuscle (\ref{psil00})\ a radial
form factor $\mathring{\psi}$\ which instead of (\ref{nop40}) satisfies the
eigenvalue problem 
\begin{equation}
-\nabla ^{2}\mathring{\psi}+G^{\prime }(\left\vert \mathring{\psi}%
\right\vert ^{2})\mathring{\psi}=\lambda \mathring{\psi}.  \label{eipr}
\end{equation}%
Obviously the above equation (\ref{eipr}) can be considered as the steady
state equation (\ref{nop40}) \ with a modified nonlinearity $G^{\prime
}-\lambda $. Note that $\psi $ is a solution of the NLS equation (\ref{NLS})
with the nonlinearity $G^{\prime }$ if and only if the function $\mathrm{e}%
^{-\mathrm{i}\frac{\chi }{2m}\lambda t}\psi $ \ is a solution of the NLS
equation with the nonlinearity $G^{\prime }-\lambda $. Therefore the
wave-corpuscle solutions constructed in Theorem \ref{Twcem} based on $%
\mathring{\psi}$ with the nonlinearity $G^{\prime }-\lambda $ provide
solutions to the NLS equation with the original nonlinearity, \ one has only
to add to the phase function $S$ an additional term $\frac{\chi }{2m}\lambda
t$. \ Existence of many solutions of the nonlinear eigenvalue problems of
the form (\ref{eipr}) \ was proved in many papers, see \cite%
{BerestyckiLions83I}, \cite{BerestyckiLions83II}, \cite{Bialynicki1}, \cite%
{Heid} . Therefore there are many wave-corpuscle solutions of a given NLS\
equation with the same trajectory of motion $\mathbf{r}\left( t\right) $ \
and the same EM fields which correspond to different form factors.
\end{remark}

\subsection{Wave-corpuscles as concentrating solutions\label{Swcconc}}

The wave-corpuscles constructed in Theorem \ref{Twcem} provide an example of
concentrating solutions. In order to see that we have to verify all the
requirements of Definition \ref{Dlocconverges}. It is important to note that
the dependence on $a$ in (\ref{psil00}) is only through $\mathring{\psi}%
\left( |\mathbf{x}-\mathbf{r}|\right) =a^{-3/2}\mathring{\psi}_{1}\left(
a^{-1}|\mathbf{x}-\mathbf{r}|\right) $ \ where $\mathring{\psi}_{1}$ is a
given smooth function. The verification of conditions imposed in the
Definition \ref{Dlocconverges} is straightforward. For instance, 
\begin{gather}
\bar{\rho}_{n}=q\int_{\Omega \left( \mathbf{\hat{r}}(t),R_{n}\right)
}a^{-3}\left\vert \mathring{\psi}_{1}\left( a^{-1}|\mathbf{x}-\mathbf{r}%
|\right) \right\vert ^{2}\,\mathrm{d}\mathbf{x}  \label{limrho} \\
=q\int_{\Omega \left( \mathbf{0},R_{n}/a\right) }\mathring{\psi}%
_{1}^{2}\left( |\mathbf{y}|\right) \,\mathrm{d}\mathbf{y}\rightarrow q\int_{%
\mathbb{R}^{3}}\mathring{\psi}_{1}^{2}\left( |\mathbf{y}|\right) \,\mathrm{d}%
\mathbf{y,}  \notag
\end{gather}%
where $R_{n}/a\rightarrow \infty $ \ according to (\ref{theninfs}) and using
that we obtain (\ref{Egrcs}) and (\ref{rconv}). To estimate integral (\ref%
{locpsibounds}) we note that according to the definition of momentum density
(\ref{schr5}), (\ref{covNLS})%
\begin{equation}
\mathbf{P}=\mathrm{i}\chi \frac{1}{2}\left[ \frac{\nabla \psi ^{\ast }}{\psi
^{\ast }}-\frac{\nabla \psi }{\psi }+2\frac{\mathrm{i}q}{\chi \mathrm{c}}%
\mathbf{A}\right] \psi ^{\ast }\psi ,  \label{PNLS}
\end{equation}%
and for the wave-corpuscles 
\begin{equation}
\mathbf{P}=\chi \left[ \nabla S-\frac{q}{\chi \mathrm{c}}\mathbf{A}\right] 
\mathring{\psi}^{2}.  \label{Pim}
\end{equation}%
Since the phase $S$ in (\ref{Sdef}) is a smooth function which does not
depend on $a$, and $\mathbf{A}$\ satisfies (\ref{Abounds}), the estimate (%
\ref{locpsibounds}) \ can be obtained using (\ref{limrho}). Using (\ref{Pim}%
) and (\ref{limrho}) we obtain (\ref{Q01Pto0}):%
\begin{equation}
\left\vert Q_{01}\right\vert \leq \left\vert t-t_{0}\right\vert
\max_{T_{-}\leq s\leq T_{+}}\left\vert \partial _{t}\mathbf{\hat{r}}%
\right\vert \max_{T_{-}\leq s\leq T_{+}}\int_{\partial \Omega
_{n}}\left\vert \mathbf{P}\right\vert \,\mathrm{d}\sigma \rightarrow 0.
\label{Q01to0}
\end{equation}%
Similarly we obtain (\ref{Q20to0}), (\ref{Q22to0}) and\ (\ref{Q23to0}). Note
that $\left\vert \mathbf{E}_{n}\left( 0\right) -\mathbf{E}_{\infty }\left(
0\right) \right\vert =\left\vert \mathbf{E}\left( 0\right) -\mathbf{E}%
_{\infty }\left( 0\right) \right\vert =0$ and $\left\vert \mathbf{B}%
_{n}\left( 0\right) -\mathbf{B}_{\infty }\left( 0\right) \right\vert
=\left\vert \mathbf{B}\left( 0\right) -\mathbf{B}_{\infty }\left( 0\right)
\right\vert =0$ according to (\ref{ebinf0a}). Using continuity of $\mathbf{E}%
\left( t,\mathbf{y}\right) $ and $\mathbf{B}\left( t,\mathbf{y}\right) $ we
conclude that $\left\vert \mathbf{E}_{n}-\mathbf{E}_{\infty }\ \right\vert
\rightarrow 0$ and $\left\vert \mathbf{B}_{n}-\mathbf{B}\right\vert
\rightarrow 0$ in $\Omega _{n}$ since $R_{n}\rightarrow 0$. Therefore
filfillment of (\ref{Q3to0}) follows from (\ref{limrho}) and (\ref{Pim}).

To obtain (\ref{Q0Tto0}) we split the tensor $T^{ij}$\ into diagonal and
non-diagonal parts given by (\ref{emfr10}) and (\ref{Tij}): 
\begin{equation}
T^{ij}=T_{i\neq j}^{ij}+T_{i=j}^{ij}.  \label{Tijsp}
\end{equation}%
According to (\ref{QP0}) $\ $%
\begin{equation*}
Q_{0}=Q_{0,\neq }+Q_{0,=}
\end{equation*}%
where%
\begin{equation*}
Q_{0,\neq }=\int_{t_{0}}^{t}\int_{\partial \Omega _{n}}\mathbf{\bar{n}}%
_{i}T_{i\neq j}^{ij}\,\mathrm{d}\sigma \mathrm{d}t^{\prime },\qquad
Q_{0,=}=\int_{t_{0}}^{t}\int_{\partial \Omega _{n}}\mathbf{\bar{n}}%
_{i}T_{i=j}^{ij}\,\mathrm{d}\sigma \mathrm{d}t^{\prime }.
\end{equation*}%
According to (\ref{Tij}) \ to estimate $\left\vert Q_{0,\neq }\right\vert \ $
it is sufficient to prove that 
\begin{equation}
\max_{T_{-}\leq t\leq T_{+}}\int_{\partial \Omega _{n}}(\frac{\chi ^{2}}{2m}%
\left\vert \nabla \psi \left( t,\mathbf{x}\right) \right\vert
^{2}+\left\vert \psi \left( t,\mathbf{x}\right) \right\vert ^{2})\,\mathrm{d}%
\sigma \rightarrow 0.  \label{psioutas}
\end{equation}%
Using this estimate and \ (\ref{Tij}) we obtain%
\begin{equation}
\left\vert Q_{0,\neq }\right\vert \leq \left\vert t-t_{0}\right\vert
\max_{T_{-}\leq s\leq T_{+}}\int_{\partial \Omega _{n}}\left\vert T_{i\neq
j}^{ij}\right\vert \,\mathrm{d}\sigma \rightarrow 0.  \label{Q0to0}
\end{equation}%
To estimate \ $Q_{0,=}$\ we use \ (\ref{emfr10}) 
\begin{gather}
\int_{\partial \Omega _{n}}\mathbf{\bar{n}}_{i}T^{ii}\mathrm{d}\sigma
=-\int_{\partial \Omega _{n}}\frac{\chi ^{2}}{m}\mathbf{\bar{n}}_{i}\tilde{%
\partial}_{i}\psi \tilde{\partial}_{i}^{\ast }\psi ^{\ast }\mathrm{d}\sigma
\label{nTii} \\
+\int_{\partial \Omega _{n}}\left[ \frac{\chi ^{2}}{2m}\left( G\left(
\left\vert \psi \right\vert ^{2}\right) +\left\vert \tilde{\nabla}\psi
\right\vert ^{2}\right) +\mathrm{i}\frac{\chi }{2}\left( \psi \tilde{\partial%
}_{t}^{\ast }\psi ^{\ast }-\psi ^{\ast }\tilde{\partial}_{t}\psi \right) %
\right] \sum_{i}\mathbf{\bar{n}}_{i}\mathrm{d}\sigma  \notag
\end{gather}%
Since $\ \left\vert \psi \right\vert ^{2}=\left\vert \mathring{\psi}%
\right\vert ^{2}$ is a radial function and $\mathbf{\bar{n}}%
_{i}=y_{i}/\left\vert \mathbf{y}\right\vert $ \ is odd with respect to $%
y_{i} $--reflection, we see that in the above integral 
\begin{equation}
\int_{\partial \Omega _{n}}G\left( \left\vert \psi \right\vert ^{2}\right) 
\mathbf{\bar{n}}_{i}\mathrm{d}\sigma =0.  \label{ig0}
\end{equation}%
We also note that 
\begin{gather}
\mathrm{i}\frac{\chi }{2}\left( \psi \tilde{\partial}_{t}^{\ast }\psi ^{\ast
}-\psi ^{\ast }\tilde{\partial}_{t}\psi \right) =\mathrm{i}\frac{\chi }{2}%
\left( \psi \partial _{t}\psi ^{\ast }-\psi ^{\ast }\partial _{t}\psi
\right) +q\varphi \left\vert \psi \right\vert ^{2}  \label{psdtps} \\
=\chi \left\vert \psi \right\vert ^{2}\partial _{t}S+q\varphi \left\vert
\psi \right\vert ^{2}  \notag
\end{gather}%
where $\partial _{t}S$ and $\varphi $ are bounded functions \ in $\Omega
_{n} $ . Under the assumption (\ref{psioutas}) straightforward estimates
produce that%
\begin{equation*}
\mathrm{i}\frac{\chi }{2}\int_{\partial \Omega _{n}}\left( \psi \tilde{%
\partial}_{t}^{\ast }\psi ^{\ast }-\psi ^{\ast }\tilde{\partial}_{t}\psi
\right) \mathrm{d}\sigma \rightarrow 0,
\end{equation*}%
\begin{equation*}
\int_{\partial \Omega _{n}}\frac{\chi ^{2}}{m}\mathbf{\bar{n}}_{i}\tilde{%
\partial}_{i}\psi \tilde{\partial}_{i}^{\ast }\psi ^{\ast }\mathrm{d}\sigma
\rightarrow 0,
\end{equation*}%
\begin{equation*}
\int_{\partial \Omega _{n}}\frac{\chi ^{2}}{2m}\left\vert \tilde{\nabla}\psi
\right\vert ^{2}\mathrm{d}\sigma \rightarrow 0
\end{equation*}%
uniformly on $\left[ T_{-},T_{+}\right] $. Hence, if (\ref{psioutas}) holds,
we obtain that 
\begin{equation}
Q_{0,=}\rightarrow 0,  \label{Q0to01}
\end{equation}%
and taking into account (\ref{Q0to0}) we conclude that (\ref{Q0Tto0}) holds.

Now let us prove that (\ref{psioutas}) holds under certain decay conditions.
\ Since the phase $S$ in (\ref{Sdef}) is bounded and have bounded
derivatives in $\Omega \left( \mathbf{\hat{r}}(t),R_{n}\right) $, to obtain (%
\ref{psioutas})\ it is sufficient to estimate surface integrals of $%
\left\vert \nabla \mathring{\psi}\right\vert ^{2}\ \ $and $\left\vert 
\mathring{\psi}\right\vert ^{2}$. Obviously, \ 

\begin{gather}
\int_{\partial \Omega \left( \mathbf{\hat{r}}(t),R_{n}\right)
}a^{-3}\left\vert \nabla \mathring{\psi}_{1}\left( a^{-1}|\mathbf{x}-\mathbf{%
r}|\right) \right\vert ^{2}\,\mathrm{d}\sigma  \label{intdom} \\
=4\pi R_{n}^{2}a^{-5}\left\vert \mathring{\psi}_{1}^{\prime }\left(
R_{n}/a\right) \right\vert ^{2}=4\pi aR_{n}^{-4}\theta ^{6}\left\vert 
\mathring{\psi}_{1}^{\prime }\left( \theta \right) \right\vert ^{2},  \notag
\end{gather}%
\begin{gather}
\int_{\partial \Omega \left( \mathbf{\hat{r}}(t),R_{n}\right)
}a^{-3}\left\vert \mathring{\psi}_{1}\left( a^{-1}|\mathbf{x}-\mathbf{r}%
|\right) \right\vert ^{2}\,\mathrm{d}\sigma =  \label{intdom0} \\
=4\pi a^{-3}R_{n}^{2}\left\vert \mathring{\psi}_{1}\left( R_{n}/a\right)
\right\vert ^{2}=4\pi aR_{n}^{-2}\theta ^{4}\left\vert \mathring{\psi}%
_{1}\left( \theta \right) \right\vert ^{2},  \notag
\end{gather}%
where $R_{n}/a=\theta \rightarrow \infty $, $R_{n}\rightarrow 0$. \ We
assume that the form factor $\ \mathring{\psi}_{1}\left( r\right) $ and its
derivative $\ \mathring{\psi}_{1}^{\prime }\left( r\right) $ \ satisfy the
following decay conditions:%
\begin{equation}
\theta ^{2}\left\vert \mathring{\psi}_{1}\left( \theta \right) \right\vert
\leq C\text{ \ as \ }\theta \rightarrow \infty ,  \label{psi1}
\end{equation}%
\begin{equation}
\theta ^{3}\left\vert \mathring{\psi}_{1}^{\prime }\left( \theta \right)
\right\vert \leq C\text{ \ as \ }\theta \rightarrow \infty ,  \label{rpsi1}
\end{equation}%
and we take such$\ \ a_{n},R_{n}\ $ that%
\begin{equation}
a_{n}R_{n}^{-4}\rightarrow 0,\qquad a_{n}\rightarrow 0,\qquad
R_{n}\rightarrow 0.  \label{ar4}
\end{equation}%
Under these assumptions we conclude that (\ref{psioutas}) is fulfilled.

Summing up the above arguments we obtain the following statement:

\begin{theorem}
\label{Twcconc}Let $\mathring{\psi}_{1}$ satisfy (\ref{psi1}), (\ref{rpsi1})
and $a_{n},R_{n}$ \ satisfy (\ref{ar4}). Let $\psi =\psi _{n}$ \ be
wave-corpuscle solutions of the NLS equation constructed in Theorem \ref%
{Twcem}. Then the sequence $\psi _{n}$ concentrates at the trajectory $%
\mathbf{\hat{r}}(t)=\mathbf{r}(t)$.
\end{theorem}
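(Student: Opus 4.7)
The plan is to verify, one by one, the requirements (i)--(vi) in Definition \ref{Dlocconverges}, exploiting the explicit form $\psi_n(t,\mathbf{x}) = \mathrm{e}^{\mathrm{i}S}\mathring{\psi}(|\mathbf{x}-\mathbf{r}(t)|)$ with $\mathring{\psi}(r) = a^{-3/2}\mathring{\psi}_1(a^{-1}r)$ and the crucial fact that the phase $S$ from (\ref{Sdef}) is a smooth function of $(t,\mathbf{y})$ independent of $a$. Condition (i) holds by Theorem \ref{Twcem}. Conditions (iii) and (iv) follow from the rescaling $\mathbf{y} = (\mathbf{x}-\mathbf{r})/a$ as in (\ref{limrho}): since $R_n/a_n \to \infty$, the rescaled ball fills $\mathbb{R}^3$ in the limit, yielding the uniform-in-$t$ limit $\bar{\rho}_\infty = q\int_{\mathbb{R}^3}|\mathring{\psi}_1|^2 \mathrm{d}\mathbf{y}$, while the bound from above in (\ref{Egrcs}) comes from the normalization (\ref{norm}). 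For (ii), I would apply (\ref{Pim}): the bracketed factor $\nabla S - (q/\chi\mathrm{c})\mathbf{A}$ is bounded on $\hat{\Omega}(\mathbf{\hat{r}},R_n)$ uniformly in $n$ by (\ref{Abounds}) combined with smoothness of $S$, so the momentum volume integral is controlled by a constant times $\int_{\Omega_n}|\mathring{\psi}|^2\,\mathrm{d}\mathbf{x}$, which is uniformly bounded by the same rescaling.

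Next I would handle the weighted volume conditions in (vi). The uniform convergences $\mathbf{E}_n \to \mathbf{E}_\infty$ and $\mathbf{B}_n \to \mathbf{B}_\infty$ on $\Omega_n$ follow from (\ref{ebinf0a}) together with the continuity of the EM fields and $R_n \to 0$. Combined with the uniform bounds on $\int_{\Omega_n}\rho\,\mathrm{d}\mathbf{x}$ and on $\int_{\Omega_n}|\mathbf{J}|\,\mathrm{d}\mathbf{x}$ coming from (\ref{limrho}) and (\ref{Pim}), this yields (\ref{Q3to0}).

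All surface integrals appearing in (v) reduce to two basic surface estimates, $\int_{\partial\Omega_n}|\mathring{\psi}|^2\,\mathrm{d}\sigma$ and $\int_{\partial\Omega_n}|\nabla\mathring{\psi}|^2\,\mathrm{d}\sigma$, since the smooth bounded phase factor $\mathrm{e}^{\mathrm{i}S}$ only contributes bounded multiplicative factors. Computing these in scaled coordinates as in (\ref{intdom}) and (\ref{intdom0}) and invoking the decay hypotheses (\ref{psi1}), (\ref{rpsi1}), one obtains bounds of order $C a_n R_n^{-2}$ and $C a_n R_n^{-4}$ respectively, both tending to zero under (\ref{ar4}). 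This establishes (\ref{psioutas}), after which the verification of $Q_{01}, Q_{20}, Q_{22}, Q_{23} \to 0$ is straightforward: each reduces, after a uniform pointwise bound on the smooth prefactors, to one of the two basic surface estimates.

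The main technical hurdle is the diagonal part $Q_{0,=}$ of the stress-tensor flux $Q_0$: after splitting $T^{ij}$ as in (\ref{Tijsp}), each diagonal entry $T^{ii}$ contains the nonlinear potential $G(|\psi|^2)$ and the covariant time-derivative combination $\psi\tilde{\partial}_t^\ast\psi^\ast - \psi^\ast\tilde{\partial}_t\psi$, neither of which is a priori of small boundary order. The resolution has two ingredients. First, the symmetry identity (\ref{ig0})---based on the odd parity of $\bar{\mathbf{n}}_i = y_i/|\mathbf{y}|$ with respect to $y_i$-reflection and the radial character of $|\psi|^2 = |\mathring{\psi}|^2$---eliminates the contribution of $G(|\psi|^2)$ exactly. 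Second, the rewriting (\ref{psdtps}) expresses the covariant time-derivative piece as $\chi|\psi|^2\partial_t S + q\varphi|\psi|^2$, which is pointwise bounded by a constant times $|\mathring{\psi}|^2$ on $\partial\Omega_n$ (using boundedness of $\partial_t S$ and $\varphi$). Both reductions throw the diagonal part back into the estimate (\ref{psioutas}), while the off-diagonal part $Q_{0,\neq}$ reduces directly to that estimate via (\ref{Q0to0}). Assembling all pieces then verifies every clause of Definition \ref{Dlocconverges} and concludes that $\mathbf{\hat{r}}(t) = \mathbf{r}(t)$ is a trajectory of concentration for $\{\psi_n\}$.
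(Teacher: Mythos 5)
Your proposal is correct and follows essentially the same route as the paper: it verifies the clauses of Definition \ref{Dlocconverges} via the rescaling (\ref{limrho}), the momentum formula (\ref{Pim}) with the $a$-independent smooth phase, the reduction of all boundary terms to the two surface estimates (\ref{intdom})--(\ref{intdom0}) under the decay hypotheses (\ref{psi1})--(\ref{rpsi1}) and (\ref{ar4}), and the same two devices for the diagonal stress-tensor flux, namely the odd-parity cancellation (\ref{ig0}) of the $G(|\psi|^{2})$ term and the rewriting (\ref{psdtps}) of the covariant time derivative.
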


\section{Concentration of asymptotic solutions\label{Sconcas}}

From the proofs of Section \ref{SNLSpoint} one can see that in the
derivation of the Newtonian dynamics we use only the conservation laws (\ref%
{schr9}) and (\ref{momNLS}). Since we use only asymptotic properties, it is
natural to consider fields which satisfy NLS equations and the conservation
laws not exactly, but approximately. Namely, we assume now that the
conservation laws (\ref{schr9}) and (\ref{momNLS}) are replaced by 
\begin{equation}
\partial _{t}\rho +\partial _{t}\rho ^{\prime }+\nabla \cdot \mathbf{J}%
+\nabla \cdot \mathbf{J}^{\prime }=0,  \label{schr9q}
\end{equation}%
\begin{equation}
\partial _{t}\mathbf{P+\partial }_{t}\mathbf{\mathbf{P}}^{\prime }\mathbf{+}%
\partial _{i}T^{ij}\mathbf{+}\partial _{i}T^{ij\prime }=\mathbf{f}+\mathbf{f}%
^{\prime }  \label{momNLSq}
\end{equation}%
where charge density $\rho $, current density $\mathbf{J}$ and tensor
elements $T^{ij}$ are defined by (\ref{schr8}),(\ref{schr8a}) and (\ref{Tij}%
) in terms of given functions $\psi ,\varphi ,\mathbf{A}$ \ and $\rho
^{\prime },\mathbf{J}^{\prime }$ $\mathbf{\mathbf{P}}^{\prime }$ $%
T^{ij\prime }$, and quantities $\mathbf{f}^{\prime }$ are perturbation terms
which vanish as $n\rightarrow \infty $. \ We show now how to modify the
proofs of statements in Section \ref{SNLSpoint} on concentrating solutions
to obtain similar statements on concentrating asymptotic solutions.

In the proof of Lemma \ref{Lmomf} \ equation (\ref{momNLSf}) is replaced by (%
\ref{momNLSq}). This leads to replacement of the equation (\ref{Peq}) by the
equation%
\begin{gather}
\int_{\Omega \left( \mathbf{\hat{r}}(t),R_{n}\right) }\mathbf{P}\left(
t\right) \,\mathrm{d}^{3}x-\int_{\Omega \left( \mathbf{\hat{r}}%
(t_{0}),R_{n}\right) }\mathbf{P}\left( t_{0}\right) \,\mathrm{d}%
^{3}x-Q_{01}\,  \label{Peqq} \\
-\int_{t_{0}}^{t}\int_{\Omega _{n}}\mathbf{f}\mathrm{d}\mathbf{x}\mathrm{d}%
t^{\prime }\,+Q_{0}+Q_{0}^{\prime }=0,  \notag
\end{gather}%
with 
\begin{gather}
Q_{0}^{\prime }=\int_{\Omega \left( \mathbf{\hat{r}}(t),R_{n}\right) }%
\mathbf{P}^{\prime }\left( t\right) \,\mathrm{d}^{3}x-\int_{\Omega \left( 
\mathbf{\hat{r}}(t_{0}),R_{n}\right) }\mathbf{P}^{\prime }\left(
t_{0}\right) \,\mathrm{d}^{3}x  \label{Q0p} \\
-\int_{t_{0}}^{t}\int_{\partial \Omega _{n}}\mathbf{P}^{\prime }\mathbf{\hat{%
v}}\cdot \mathbf{\bar{n}}\mathrm{d}\sigma \mathrm{d}t^{\prime
}+\int_{t_{0}}^{t}\int_{\Omega _{n}}\partial _{i}T^{ij\prime }\,\mathrm{d}%
\mathbf{x}\mathrm{d}t^{\prime }-\int_{t_{0}}^{t}\int_{\Omega _{n}}\mathbf{f}%
^{\prime }\,\mathrm{d}\mathbf{x}\mathrm{d}t^{\prime }.  \notag
\end{gather}

If%
\begin{equation}
Q_{0}^{\prime }\rightarrow 0  \label{Q0pp0}
\end{equation}%
the proof of Lemma \ref{Lmomf} remains valid with $Q_{00}=Q_{0}+Q_{0}^{%
\prime }-Q_{01}$, and we obtain the following lemma: \ 

\begin{lemma}
\label{Lmomfq}Let the conservation law (\ref{momNLS}) be replaced by (\ref%
{momNLSq})\ with condition (\ref{Q0pp0}) satisfied. Then the statement of
Lemma \ref{Lmomf} \ remains true: 
\begin{equation}
\int_{\Omega _{n}}\mathbf{P}_{n}\left( t\right) \,\mathrm{d}\mathbf{x}%
=\int_{t_{0}}^{t}\int_{\Omega _{n}}\mathbf{f}\,\mathrm{d}\mathbf{x}\mathrm{d}%
t^{\prime }+\mathbf{p}_{\infty }+Q_{00}  \label{intptq}
\end{equation}%
where 
\begin{equation}
Q_{00}\rightarrow 0\text{ \ \ as \ }n\rightarrow \infty
\end{equation}%
uniformly on $\left[ T_{-},T_{+}\right] $.
\end{lemma}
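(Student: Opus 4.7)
The plan is to mirror the proof of Lemma \ref{Lmomf} step by step, tracking the extra primed quantities and absorbing them into the remainder. First, I would integrate the perturbed momentum balance (\ref{momNLSq}) over the cross-section $\Omega_n(t')$ and then in $t'$ from $t_0$ to $t$. The unprimed contributions reproduce exactly the three terms that appeared in (\ref{momQ0}): the bulk $\int_{t_0}^{t}\int_{\Omega_n}\partial_t\mathbf{P}\,\mathrm{d}\mathbf{x}\mathrm{d}t'$, the boundary flux $Q_0$ from (\ref{QP0}) (arising from $\int_{\Omega_n}\partial_iT^{ij}\,\mathrm{d}\mathbf{x}$ via the divergence theorem), and $-\int_{t_0}^{t}\int_{\Omega_n}\mathbf{f}\,\mathrm{d}\mathbf{x}\mathrm{d}t'$.

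Next, I would apply the moving-domain identity (\ref{eldt}) to rewrite $\int_{\Omega_n}\partial_t\mathbf{P}\,\mathrm{d}\mathbf{x}$ as $\partial_t\int_{\Omega_n}\mathbf{P}\,\mathrm{d}\mathbf{x}-\int_{\partial\Omega_n}\mathbf{P}\,\hat{\mathbf{v}}\cdot\bar{\mathbf{n}}\,\mathrm{d}\sigma$; integrating in $t'$ the first piece collapses to boundary values at $t$ and $t_0$, while the second is precisely $Q_{01}$ of (\ref{Q01a}). Performing the analogous manipulations on the primed terms $\partial_t\mathbf{P}'$, $\partial_iT^{ij\prime}$, and $\mathbf{f}'$ assembles them exactly into the expression (\ref{Q0p}) for $Q_0'$. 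After rearrangement one obtains
\begin{equation*}
\int_{\Omega_n}\mathbf{P}(t)\,\mathrm{d}\mathbf{x}-\int_{\Omega(\mathbf{\hat{r}}(t_0),R_n)}\mathbf{P}(t_0)\,\mathrm{d}\mathbf{x}-Q_{01}-\int_{t_0}^{t}\int_{\Omega_n}\mathbf{f}\,\mathrm{d}\mathbf{x}\mathrm{d}t'+Q_0+Q_0'=0,
\end{equation*}
which is just (\ref{Peq}) corrected by the single extra term $Q_0'$.

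Finally, I would solve for $\int_{\Omega_n}\mathbf{P}(t)\,\mathrm{d}\mathbf{x}$, use the selected subsequence convergence (\ref{pnt0}) to write $\int_{\Omega(\mathbf{\hat{r}}(t_0),R_n)}\mathbf{P}(t_0)\,\mathrm{d}\mathbf{x}=\mathbf{p}_{\infty}+\varepsilon_n$ with $\varepsilon_n\to 0$, and define
\begin{equation*}
Q_{00}=Q_0+Q_0'-Q_{01}+\varepsilon_n.
\end{equation*}
The hypotheses (\ref{Q0Tto0}), (\ref{Q01Pto0}), and (\ref{Q0pp0}), together with (\ref{pnt0}), then yield $Q_{00}\to 0$ uniformly on $[T_-,T_+]$, giving (\ref{intptq}).

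There is no genuine obstacle here; the argument is essentially bookkeeping, matching each remainder against the hypothesis that controls it. The only point requiring mild attention is the uniformity in $t$, which is preserved because each of $Q_0$, $Q_{01}$, $Q_0'$ is assumed to vanish uniformly on $[T_-,T_+]$ and $\mathbf{p}_\infty$ and $\varepsilon_n$ are $t$-independent. In particular, no new regularity of $\mathbf{P}'$, $T^{ij\prime}$, or $\mathbf{f}'$ beyond what is implicit in (\ref{Q0pp0}) is needed, since these objects enter the conclusion only through the already-controlled combination $Q_0'$.
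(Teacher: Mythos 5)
Your proposal is correct and follows essentially the same route as the paper: integrate the perturbed momentum balance (\ref{momNLSq}) over $\Omega_n$ and in time, apply (\ref{eldt}), collect the primed contributions into $Q_0'$ exactly as in (\ref{Q0p}), and conclude via (\ref{Q0Tto0}), (\ref{Q01Pto0}), (\ref{Q0pp0}) and (\ref{pnt0}). Your explicit tracking of the $\varepsilon_n$ term from (\ref{pnt0}) is, if anything, slightly more careful than the paper's statement $Q_{00}=Q_0+Q_0'-Q_{01}$.
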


Sufficient conditions for fulfillment of (\ref{Q0pp0}) are the following
limit relations 
\begin{equation}
\int_{\Omega \left( \mathbf{\hat{r}}(t),R_{n}\right) }\mathbf{P}^{\prime
}\left( t\right) \,\mathrm{d}\mathbf{x}\rightarrow 0,  \label{Ppto0}
\end{equation}%
\begin{equation}
\int_{t_{0}}^{t}\int_{\partial \Omega _{n}}\mathbf{P}^{\prime }\mathbf{\hat{v%
}}\cdot \mathbf{\bar{n}}\mathrm{d}\sigma \mathrm{d}t^{\prime }\rightarrow 0,
\label{Ppdto0}
\end{equation}%
\begin{equation}
\int_{t_{0}}^{t}\int_{\partial \Omega _{n}}\mathbf{\bar{n}}_{i}T^{ij\prime
}\,\mathrm{d}\sigma \mathrm{d}t^{\prime }\rightarrow 0,  \label{Tijpto0}
\end{equation}%
\begin{equation}
\int_{t_{0}}^{t}\int_{\Omega _{n}}\,\mathbf{f}^{\prime }\,\mathrm{d}\mathbf{x%
}\mathrm{d}t^{\prime }\rightarrow 0  \label{florto0}
\end{equation}%
uniformly on the time interval $\left[ T_{-},T_{+}\right] .$

Let us take a look at the proof of Lemma \ref{LJlimv}. Since we replace the
continuity equation \ (\ref{schr9}) by (\ref{schr9q}), the equation (\ref%
{Pintr0}) involves now additional terms: 
\begin{equation}
\int_{\Omega _{n}}\partial _{t}\left( \left( \mathbf{x-r}\right) \rho
\right) \mathrm{d}\mathbf{x}+\partial _{t}\mathbf{r}\int_{\Omega _{n}}\rho 
\mathrm{d}\mathbf{x}+\int_{\partial \Omega _{n}}\left( \mathbf{x-r}\right) 
\mathbf{n}\cdot \mathbf{J}\mathrm{d}\mathbf{x}+Q_{2}^{\prime }=\int_{\Omega
_{n}}\mathbf{J}\mathrm{d}\mathbf{x},
\end{equation}%
with 
\begin{equation}
Q_{2}^{\prime }=\int_{\Omega _{n}}\partial _{t}\left( \left( \mathbf{x-r}%
\right) \rho ^{\prime }\right) \mathrm{d}\mathbf{x}+\partial _{t}\mathbf{r}%
\int_{\Omega _{n}}\rho ^{\prime }\mathrm{d}\mathbf{x}+\int_{\partial \Omega
_{n}}\left( \mathbf{x-r}\right) \mathbf{n}\cdot \mathbf{J}^{\prime }\mathrm{d%
}\mathbf{x}-\int_{\Omega _{n}}\mathbf{J}^{\prime }\mathrm{d}\mathbf{x.}
\label{Q2p}
\end{equation}

We assume that 
\begin{equation}
Q_{2}^{\prime }\rightarrow 0  \label{Q2pto0}
\end{equation}%
and conclude that the statement of of Lemma \ref{LJlimv} holds:

\begin{lemma}
\label{LJlimvq}Let the continuity equation \ (\ref{schr9}) be replaced by (%
\ref{schr9q}) with condition (\ref{Q2pto0}) fulfilled. Then the statement of
Lemma \ref{LJlimv} is true.
\end{lemma}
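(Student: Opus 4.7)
The plan is to repeat the proof of Lemma \ref{LJlimv} verbatim, only with the continuity equation replaced by the perturbed version (\ref{schr9q}), and then show that the extra terms generated by $\rho'$ and $\mathbf{J}'$ assemble precisely into $Q_2'$, which vanishes by hypothesis (\ref{Q2pto0}).

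Concretely, I would multiply (\ref{schr9q}) by $(\mathbf{x}-\mathbf{r})$ and integrate over $\Omega_n$. The terms involving $\rho$ and $\mathbf{J}$ are handled exactly as in the proof of Lemma \ref{LJlimv}: the commutation identity (\ref{comdx}) converts $\int_{\Omega_n}(\mathbf{x}-\mathbf{r})\nabla\cdot\mathbf{J}\,\mathrm{d}\mathbf{x}$ into a boundary term plus $-\int_{\Omega_n}\mathbf{J}\,\mathrm{d}\mathbf{x}$, and the identity (\ref{eldt}) together with the definition of the charge center rewrites $\int_{\Omega_n}(\mathbf{x}-\mathbf{r})\partial_t\rho\,\mathrm{d}\mathbf{x}$ in terms of $\mathbf{v}\bar\rho_n$, $Q_{20}$, and surface integrals. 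The perturbation terms produce, after the same two manipulations applied now to $\rho'$ and $\mathbf{J}'$, exactly the four expressions
\begin{equation*}
\int_{\Omega _{n}}\partial _{t}\!\left( \left( \mathbf{x-r}\right) \rho^{\prime }\right) \mathrm{d}\mathbf{x},\quad \partial _{t}\mathbf{r}\int_{\Omega _{n}}\rho ^{\prime }\mathrm{d}\mathbf{x},\quad \int_{\partial \Omega_{n}}\left( \mathbf{x-r}\right) \mathbf{n}\cdot \mathbf{J}^{\prime }\mathrm{d}\sigma,\quad -\int_{\Omega _{n}}\mathbf{J}^{\prime }\mathrm{d}\mathbf{x},
\end{equation*}
whose sum is $Q_2'$ as defined in (\ref{Q2p}).

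Having matched the perturbation cleanly with $Q_2'$, I would invoke hypothesis (\ref{Q2pto0}) to conclude that the $\rho'$, $\mathbf{J}'$ contributions are $o(1)$ uniformly on $[T_-,T_+]$. The remainder of the argument is then identical to the original Lemma \ref{LJlimv}: the boundedness (\ref{locpsibounds}), the bounds (\ref{Egrcs}), the convergence (\ref{rminrns}) from Lemma \ref{Lcentrcs}, and the convergence (\ref{rtrinf}) from Lemma \ref{Ladjch} (which itself has to be used in its perturbed form, but that is already established earlier in Section \ref{Sconcas} by the same absorption-into-$Q'$ argument), combine with assumptions (\ref{Q20to0}), (\ref{Q22to0}) to give $|\mathbf{v}|\le C$ and (\ref{j200}) with $Q_{200}=Q_{20}+Q_{21}+Q_{22}+Q_2'\to 0$; then (\ref{PmqJ}) yields (\ref{p200}).

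The only real step that requires care is the bookkeeping: one must verify that every extra term created by the $\rho'$ and $\mathbf{J}'$ summands indeed matches one of the four summands in the definition (\ref{Q2p}) of $Q_2'$, with the correct sign, so that no stray term remains outside the quantity assumed to vanish. This is essentially a calculation with (\ref{eldt}) and the commutation identity (\ref{comdx}), and is the main (though mild) obstacle; once it is done, the conclusion of Lemma \ref{LJlimv} follows at once in the perturbed setting.
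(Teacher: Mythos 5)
Your proposal is correct and follows essentially the same route as the paper: the perturbed continuity equation (\ref{schr9q}) simply adds the term $Q_{2}^{\prime}$ of (\ref{Q2p}) to the identity (\ref{Pintr0}), and hypothesis (\ref{Q2pto0}) lets you absorb it into $Q_{200}$ so that the rest of the proof of Lemma \ref{LJlimv} goes through unchanged. Your side remark that Lemma \ref{Ladjch} must also be invoked in its perturbed form (which the paper handles via $Q_{3}^{\prime}$ and is guaranteed by Definition \ref{Dconcas}) is an accurate and careful observation.
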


Note that according to (\ref{dtxrh}) sufficient conditions for fulfillment
of (\ref{Q2pto0}) \ are as follows:%
\begin{equation}
\int_{\Omega _{n}}\rho ^{\prime }\mathrm{d}\mathbf{x}\rightarrow 0,
\label{Q2p1}
\end{equation}%
\begin{equation}
\int_{\partial \Omega _{n}}\left( \mathbf{x-r}\right) \mathbf{n}\cdot 
\mathbf{J}^{\prime }\mathrm{d}\mathbf{x}\rightarrow 0,  \label{Q2p2}
\end{equation}%
\begin{equation}
\int_{\partial \Omega _{n}}\left( \mathbf{x}-\mathbf{r}\right) \mathbf{\hat{v%
}}\cdot \mathbf{\bar{n}}\rho ^{\prime }\mathrm{d}\sigma \rightarrow 0.
\label{Q2p3}
\end{equation}

Now we consider the proof of Lemma \ref{Ladjch}.\ Equation (\ref{dtrb}) is
replaced by 
\begin{equation}
\bar{\rho}\left( t\right) -\bar{\rho}\left( t_{0}\right)
-\int_{t_{0}}^{t}\int_{\partial \Omega _{n}}\mathbf{\hat{v}\cdot n}\rho 
\mathrm{d}\mathbf{x}\mathrm{d}t^{\prime }+\int_{t_{0}}^{t}\int_{\partial
\Omega _{n}}\mathbf{n}\cdot \mathbf{J}\mathrm{d}\mathbf{x}\mathrm{d}%
t^{\prime }+Q_{3}^{\prime }=0,  \label{dtrbq}
\end{equation}%
where%
\begin{equation}
Q_{3}^{\prime }=\bar{\rho}^{\prime }\left( t\right) -\bar{\rho}^{\prime
}\left( t_{0}\right) -\int_{\partial \Omega _{n}}\mathbf{\hat{v}\cdot n}\rho
^{\prime }\mathrm{d}\mathbf{x}+\int_{\partial \Omega _{n}}\mathbf{n}\cdot 
\mathbf{J}^{\prime }\mathrm{d}\mathbf{x.}  \label{Q3p}
\end{equation}%
The proof of Lemma \ \ref{Ladjch} is preserved if we assume that 
\begin{equation}
Q_{3}^{\prime }\rightarrow 0.  \label{Q3pto0}
\end{equation}

\begin{lemma}
Let the continuity equation \ (\ref{schr9}) be replaced by (\ref{schr9q})
with condition (\ref{Q3pto0}) fulfilled. Then the statement of Lemma \ref%
{Ladjch} is true.
\end{lemma}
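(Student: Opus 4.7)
The plan is to mimic the proof of Lemma \ref{Ladjch}, tracking carefully the extra perturbation terms that arise when the exact continuity equation (\ref{schr9}) is replaced by the approximate one (\ref{schr9q}). Specifically, I would integrate (\ref{schr9q}) over the spatial domain $\Omega_n$ and apply the elementary identity (\ref{eldt}) to exchange the order of the time derivative and spatial integration, picking up boundary fluxes along $\partial \Omega_n$ from the motion of the tubular neighborhood. Then I integrate in time from $t_0$ to $t$; the exact parts of $\rho$ and $\mathbf{J}$ reproduce the identity (\ref{dtrb}) underlying the original lemma, while the primed parts collect precisely into the quantity $Q_3^\prime$ defined in (\ref{Q3p}). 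This is exactly equation (\ref{dtrbq}).

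Next I would pass to the limit $n \to \infty$ in (\ref{dtrbq}). By hypothesis (\ref{Q23to0}), applied to the functional $Q_{23}$ defined in (\ref{Q23}), the sum of the two surface-flux integrals involving $\rho$ and $\mathbf{J}$ tends to zero uniformly in $t \in [T_-,T_+]$. By the newly imposed hypothesis (\ref{Q3pto0}), the perturbation term $Q_3^\prime$ also tends to zero uniformly. Combining these with the convergence $\bar{\rho}_n(t_0) \to \bar{\rho}_\infty$ from (\ref{rconv}), equation (\ref{dtrbq}) forces $\bar{\rho}_n(t) \to \bar{\rho}_\infty$ uniformly on $[T_-,T_+]$, which is precisely the conclusion (\ref{rtrinf}) of Lemma \ref{Ladjch}.

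There is essentially no obstacle here: the hypothesis (\ref{Q3pto0}) is tailored so that the perturbation contributions to the continuity-equation-derived identity are exactly what needs to be controlled. The only small bookkeeping point is making sure that all four contributions packaged into $Q_3^\prime$ in (\ref{Q3p})—namely the two volume integrals of $\rho^\prime$ at times $t$ and $t_0$, and the two boundary integrals of $\rho^\prime$ and $\mathbf{J}^\prime$—are indeed the ones generated by applying (\ref{eldt}) and the divergence theorem to the primed terms of (\ref{schr9q}). Once this identification is verified, the argument of the original Lemma \ref{Ladjch} transfers verbatim.
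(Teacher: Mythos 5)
Your proposal is correct and follows essentially the same route as the paper: integrate the perturbed continuity equation (\ref{schr9q}) over $\Omega_n$ and in time, identify the primed contributions with $Q_{3}^{\prime}$ to obtain (\ref{dtrbq}), and then invoke (\ref{Q23to0}), (\ref{Q3pto0}) and (\ref{rconv}) to conclude (\ref{rtrinf}). The paper's own argument is exactly this observation, stated even more tersely.
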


Now we collect all the assumptions required for exact solutions in Section %
\ref{SNLSpoint} that remain valid for asymptotic solutions in the following
definition:

\begin{definition}[Concentrating asymptotic solutions]
\label{Dconcas}We assume all conditions of Definition \ref{Dlocconverges}
except condition (i), which is replaced by the following weaker condition:
conservation laws (\ref{momNLSq}) and (\ref{schr9q}) are fulfilled and
conditions (\ref{Q2pto0}), (\ref{Q0pp0}), (\ref{Q3pto0}) hold. Then we say
that asymptotic solutions of\ the NLS\ equation (\ref{NLS}) concentrate at $%
\mathbf{\hat{r}}(t)$. We call $\mathbf{\hat{r}}(t)$ a concentration
trajectory \ of the NLS equation in asymptotic sense if there exists a
sequence of asymptotic solutions which concentrates at $\mathbf{\hat{r}}(t)$.
\end{definition}

Using Lemmas \ref{Lmomfq} and \ref{LJlimvq} instead of Lemmas \ref{Lmomf}
and \ref{LJlimv} we obtain statements of Theorems \ref{TconcNLS} and \ref%
{Cq0s} \ under the assumption that asymptotic solutions $\psi $ of\ the NLS\
equation (\ref{NLS}) concentrate at $\mathbf{\hat{r}}(t)$. In particular, we
obtain

\begin{theorem}
\label{Cq0sa}Assume that potentials $\varphi \left( t,\mathbf{x}\right) $, $%
\mathbf{A}\left( t,\mathbf{x}\right) $ are defined and twice continuously
differentiable in a domain $D\subset \mathbb{R}\times \mathbb{R}^{3}$, \ the
trajectory $\left( t,\mathbf{\hat{r}}\left( t\right) \right) $ lies in this
domain and the limit potentials $\varphi _{\infty }$ , $\mathbf{A}_{\infty }$
\ are the restriction of fixed potentials $\varphi $,$\mathbf{A}$ as in (\ref%
{fiarestr}). Let EM fields \ $\mathbf{E}\left( t,\mathbf{x}\right) ,\mathbf{B%
}\left( t,\mathbf{x}\right) $ \ be determined in terms of the potentials by
formula (\ref{maxw3a}).\ Let asymptotic solutions $\psi $ of\ the NLS\
equation (\ref{NLS}) asymptotically concentrate at $\mathbf{\hat{r}}(t)$.
Then the trajectory $\mathbf{\hat{r}}$ satisfies Newton's law of motion (\ref%
{New2sa}).
\end{theorem}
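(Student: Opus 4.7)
The plan is to mirror the proof of Theorem \ref{Cq0s} step by step, but using the asymptotic versions of the auxiliary lemmas provided in this section. Concretely, Definition \ref{Dconcas} guarantees that the approximate conservation laws (\ref{schr9q}) and (\ref{momNLSq}) hold with error terms controlled so that (\ref{Q0pp0}), (\ref{Q2pto0}) and (\ref{Q3pto0}) are in force. This is exactly what Lemmas \ref{Lmomfq} and \ref{LJlimvq} need, so the identities
\begin{equation*}
\int_{\Omega_n}\mathbf{P}_n(t)\,\mathrm{d}\mathbf{x}=\int_{t_0}^t\int_{\Omega_n}\mathbf{f}\,\mathrm{d}\mathbf{x}\mathrm{d}t'+\mathbf{p}_\infty+Q_{00},\qquad \int_{\Omega_n}\mathbf{J}\,\mathrm{d}\mathbf{x}=\mathbf{v}\bar{\rho}_\infty+Q_{200},
\end{equation*}
with $Q_{00},Q_{200}\to 0$ uniformly on $[T_-,T_+]$, are available verbatim.

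Next, I would repeat the computation in the proof of Theorem \ref{Tconctr}: substitute the expression for $\int\mathbf{P}_n\,\mathrm{d}\mathbf{x}$ coming from $\mathbf{P}=\frac{m}{q}\mathbf{J}$ into the momentum balance, split the Lorentz force density using the limit fields $\mathbf{E}_\infty,\mathbf{B}_\infty$, and absorb the differences $\mathbf{E}-\mathbf{E}_\infty$, $\mathbf{B}-\mathbf{B}_\infty$ into the quantities $Q_{30},Q_{31}$, whose time integrals vanish by (\ref{Q3to0}). Together with the asymptotic version of Lemma \ref{Ladjch} giving $\bar{\rho}_n(t)\to\bar{\rho}_\infty$, this yields the integral equation (\ref{NNLSQ}) for $\mathbf{v}=\mathbf{v}_n$ with a remainder $Q_6\to 0$ uniformly in $t$.

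Now I invoke the uniform convergence and linearity argument from the proof of Theorem \ref{TconcNLS}: the sequence $\mathbf{v}_n$ solves a linear Volterra-type integral equation whose forcing converges uniformly, so $\mathbf{v}_n$ converges uniformly to the unique solution $\mathbf{v}_\infty$ of the limiting equation (\ref{vinfs}). Lemma \ref{Lcentrcs} identifies $\int_{t_0}^t\mathbf{v}_n\,\mathrm{d}t'=\mathbf{r}_n(t)-\mathbf{r}_n(t_0)\to\mathbf{\hat{r}}(t)-\mathbf{\hat{r}}(t_0)$, forcing $\mathbf{v}_\infty=\partial_t\mathbf{\hat{r}}$. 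Differentiating (\ref{vinfs}) in $t$ yields (\ref{New2s}). Finally, since the potentials $\varphi,\mathbf{A}$ are the fixed twice continuously differentiable functions in $D$ and $\varphi_\infty,\mathbf{A}_\infty$ are taken as their linearizations at $\mathbf{\hat{r}}$ as in Example \ref{EconcNLS}, formula (\ref{ebinf0}) identifies $\mathbf{E}_\infty(t)=\mathbf{E}(t,\mathbf{\hat{r}}(t))$ and $\mathbf{B}_\infty(t)=\mathbf{B}(t,\mathbf{\hat{r}}(t))$, so (\ref{New2s}) becomes (\ref{New2sa}).

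I do not expect any substantive new obstacle beyond bookkeeping, because all the analytic work has been isolated into the modified lemmas of this section; the only thing one must check carefully is that the error terms $Q_0^{\prime},Q_2^{\prime},Q_3^{\prime}$ appearing when (\ref{schr9}) and (\ref{momNLS}) are replaced by (\ref{schr9q}) and (\ref{momNLSq}) really can be absorbed into a single remainder $Q_6\to 0$ uniformly on $[T_-,T_+]$. This is a routine triangle-inequality estimate using the hypotheses of Definition \ref{Dconcas} together with the uniform bounds (\ref{fihatbs}) and (\ref{ahatbs}) on the limit potentials.
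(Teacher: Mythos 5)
Your proposal is correct and follows essentially the same route as the paper, which proves Theorem \ref{Cq0sa} precisely by substituting Lemmas \ref{Lmomfq} and \ref{LJlimvq} (and the asymptotic version of Lemma \ref{Ladjch}) for their exact counterparts and then rerunning the arguments of Theorems \ref{Tconctr}, \ref{TconcNLS} and \ref{Cq0s}. Your write-up in fact supplies more detail than the paper does at this point, and the details are consistent with the earlier proofs.
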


\subsection{Point trajectories as trajectories of asymptotic concentration 
\label{Spoint}}

Let us consider NLS quation (\ref{NLS}) in a domain $D\subset \mathbb{R}%
\times \mathbb{R}^{3}$ and assume that potentials $\varphi \left( t,\mathbf{x%
}\right) $ and $\mathbf{A}\left( t,\mathbf{x}\right) $ are defined and twice
continuously differentiable in domain $D$. Let us consider equations (\ref%
{flor})-(\ref{flor1}) describing Newtonian dynamics of a point charge in EM\
field. Let us consider a solution $\mathbf{r}(t)$ of equations (\ref{flor})-(%
\ref{flor1}), and assume that the trajectory \ $\left( t,\mathbf{r}%
(t)\right) $ lies in $D$ \ on the time interval $T_{-}\leq t\leq T_{+}$. \
Now we construct asymptotic solutions of the NLS which concentrate at $%
\mathbf{r}(t)$.

As a first step we find the linear part of $\varphi \left( t,\mathbf{x}%
\right) $, $\mathbf{A}\left( t,\mathbf{x}\right) $ at $\mathbf{r}(t)$ as in (%
\ref{fiarestr})%
\begin{eqnarray}
\varphi _{\infty }\left( t,\mathbf{x}\right) &=&\varphi \left( t,\mathbf{r}%
\right) +\left( \mathbf{x}-\mathbf{r}\right) \nabla \varphi \left( t,\mathbf{%
r}\right) ,  \label{fiarestra} \\
\mathbf{A}_{\infty }\left( t,\mathbf{x}\right) &=&\mathbf{A}\left( t,\mathbf{%
r}\right) +\left( \mathbf{x}-\mathbf{r}\right) \nabla \mathbf{A}\left( t,%
\mathbf{r}\right) ,  \notag
\end{eqnarray}%
As a second step we construct the auxiliary potentials 
\begin{gather}
\varphi _{\mathrm{aux}}\left( t,\mathbf{x}\right) =\varphi \left( t,\mathbf{r%
}\right) +\left( \mathbf{x}-\mathbf{r}\right) \nabla \varphi \left( t,%
\mathbf{r}\right) +\varphi _{2}\left( t,\mathbf{x}-\mathbf{r}\right) ,
\label{fiaux} \\
\mathbf{A}_{\mathrm{aux}}\left( t,\mathbf{x}\right) =\mathbf{A}\left( t,%
\mathbf{r}\right) +\left( \mathbf{x}-\mathbf{r}\right) \nabla \mathbf{A}%
\left( t,\mathbf{r}\right) ,  \notag
\end{gather}%
where $\varphi _{2}$ is determined by (\ref{fiex2a}) and $\varphi _{2}\left(
t,\mathbf{x}-\mathbf{r}\right) $ is quadratic with respect to $\left( 
\mathbf{x}-\mathbf{r}\right) $.

The wave-corpuscle $\psi $ described in Theorem \ref{Twcem} is a solution of
the NLS equation with the potentials $\left( \varphi _{\mathrm{aux}},\mathbf{%
A}_{\mathrm{aux}}\right) $, and hence it exactly satisfies the corresponding
conservation laws. From fulfillment of (\ref{schr9}) and (\ref{momNLS}) for $%
\mathbf{P}\left( \varphi _{\mathrm{aux}},\mathbf{A}_{\mathrm{aux}}\right) $%
\textbf{, }$\mathbf{J}\left( \varphi _{\mathrm{aux}},\mathbf{A}_{\mathrm{aux}%
}\right) $\textbf{, }$T^{ij}\left( \varphi _{\mathrm{aux}},\mathbf{A}_{%
\mathrm{aux}}\right) $, $\mathbf{f}\left( \varphi _{\mathrm{aux}},\mathbf{A}%
_{\mathrm{aux}}\right) $ we obtain fulfillment of (\ref{schr9q}), (\ref%
{momNLSq}) with%
\begin{equation}
\mathbf{P}^{\prime }=\mathbf{P}\left( \varphi ,\mathbf{A}\right) -\mathbf{P}%
\left( \varphi _{\mathrm{aux}},\mathbf{A}_{\mathrm{aux}}\right) ,  \label{pp}
\end{equation}%
\begin{equation}
\mathbf{J}^{\prime }=\mathbf{J}\left( \varphi ,\mathbf{A}\right) -\mathbf{J}%
\left( \varphi _{\mathrm{aux}},\mathbf{A}_{\mathrm{aux}}\right) ,  \label{jp}
\end{equation}%
\begin{equation}
\mathbf{f}^{\prime }=\mathbf{f}\left( \varphi ,\mathbf{A}\right) -\mathbf{f}%
\left( \varphi _{\mathrm{aux}},\mathbf{A}_{\mathrm{aux}}\right) ,  \label{fp}
\end{equation}%
\begin{equation}
T^{ij\prime }=T^{ij}\left( \varphi ,\mathbf{A}\right) -T^{ij}\left( \varphi
_{\mathrm{aux}},\mathbf{A}_{\mathrm{aux}}\right) .  \label{tjp}
\end{equation}%
The expression for $\rho $ does not depend on the potentials, therefore 
\begin{equation}
\rho ^{\prime }=0.  \label{rp0}
\end{equation}%
Now we need to verify that conditions (\ref{Q2pto0}), (\ref{Q0pp0}), (\ref%
{Q3pto0}) hold for the wave-corpuscle $\psi $.

From (\ref{schr5}), (\ref{Pim}) we see that 
\begin{equation}
\mathbf{P}\left( \varphi ,\mathbf{A}\right) -\mathbf{P}\left( \varphi _{%
\mathrm{aux}},\mathbf{A}_{\mathrm{aux}}\right) =-\frac{q}{\mathrm{c}}\left( 
\mathbf{A-A}_{\mathrm{aux}}\right) \psi ^{\ast }\psi ,  \label{amaaux}
\end{equation}%
and according to (\ref{PmqJ}) 
\begin{equation}
\mathbf{J}\left( \varphi ,\mathbf{A}\right) -\mathbf{J}\left( \varphi _{%
\mathrm{aux}},\mathbf{A}_{\mathrm{aux}}\right) =-\frac{q^{2}}{m\mathrm{c}}%
\left( \mathbf{A-A}_{\mathrm{aux}}\right) \psi ^{\ast }\psi .  \label{Jp}
\end{equation}%
\ From the expression for the Lorentz density (\ref{flors}) we obtain 
\begin{gather*}
\mathbf{f}\left( \varphi ,\mathbf{A}\right) -\mathbf{f}\left( \varphi _{%
\mathrm{aux}},\mathbf{A}_{\mathrm{aux}}\right) =\rho \left( \mathbf{E-E}_{%
\mathrm{aux}}\right) +\frac{1}{\mathrm{c}}\left( \mathbf{J}\times \mathbf{B-J%
}_{\mathrm{aux}}\times \mathbf{B}_{\mathrm{aux}}\right) = \\
=\rho \left( \mathbf{E-E}_{\mathrm{aux}}\right) +\frac{1}{\mathrm{c}}\left(
\left( \mathbf{J-J}_{\mathrm{aux}}\right) \times \mathbf{B+J}_{\mathrm{aux}%
}\times \left( \mathbf{B}-\mathbf{B}_{\mathrm{aux}}\right) \right) .
\end{gather*}%
Using (\ref{Pim}) and (\ref{PmqJ}) we rewrite expression for $\mathbf{f}%
^{\prime }$ in the form 
\begin{gather}
\mathbf{f}^{\prime }=\rho \left( \mathbf{E-E}_{\mathrm{aux}}\right)
\label{florpa} \\
+\frac{1}{\mathrm{c}}\left( -\frac{q^{2}}{m\mathrm{c}}\mathring{\psi}%
^{2}\left( \mathbf{A-A}_{\mathrm{aux}}\right) \times \mathbf{B+}\chi \frac{q%
}{m}\mathring{\psi}^{2}\left[ \nabla S-\frac{q}{\chi \mathrm{c}}\mathbf{A}_{%
\mathrm{aux}}\right] \times \left( \mathbf{B}-\mathbf{B}_{\mathrm{aux}%
}\right) \right) .  \notag
\end{gather}%
To estimate the difference of tensor elements (\ref{tjp}) we use (\ref{Tij})
and (\ref{emfr10}). Note that according to the construction of $\varphi _{%
\mathrm{aux}}$ and $\mathbf{A}_{\mathrm{aux}}$ the differences $\varphi
-\varphi _{\mathrm{aux}}$ and $\ \mathbf{A-A}_{\mathrm{aux}}$ have the
second order zero at $\mathbf{r}(t)$. Hence the diffferences $\mathbf{E-E}_{%
\mathrm{aux}}$ and $\mathbf{B-B}_{\mathrm{aux}}$ have a zero of the first
order at $\mathbf{x}=\mathbf{r}$:%
\begin{equation}
\left\vert \mathbf{A-A}_{\mathrm{aux}}\right\vert \leq C\left\vert \mathbf{x}%
-\mathbf{r}\right\vert ^{2},\qquad \left\vert \varphi -\varphi _{\mathrm{aux}%
}\right\vert \leq C\left\vert \mathbf{x}-\mathbf{r}\right\vert ^{2},
\label{ama2}
\end{equation}%
\begin{equation}
\left\vert \mathbf{E-E}_{\mathrm{aux}}\right\vert \leq C\left\vert \mathbf{x}%
-\mathbf{r}\right\vert ,\qquad \left\vert \mathbf{B-B}_{\mathrm{aux}%
}\right\vert \leq C\left\vert \mathbf{x}-\mathbf{r}\right\vert ,  \label{eme}
\end{equation}%
and they are vanishingly small in $\Omega \left( \mathbf{r}(t),R_{n}\right) $%
. Now we estimate terms which enter (\ref{Q0p}), (\ref{Q2p}), (\ref{Q3p}).

\begin{lemma}
Let $\mathbf{P}^{\prime },\mathbf{J}^{\prime },\mathbf{f}^{\prime
},T^{ij\prime }$ be defined by (\ref{pp})-(\ref{tjp}), conditions (\ref{psi1}%
), (\ref{rpsi1}) and (\ref{ar4}) fulfilled. Then (\ref{Ppto0})-(\ref{florto0}%
) and (\ref{Q2p1})-(\ref{Q2p3}) and (\ref{Q3pto0}) hold.
\end{lemma}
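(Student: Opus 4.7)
The plan is to reduce every condition in the target list to the two basic estimates already recorded in \eqref{ama2}--\eqref{eme}, combined with the explicit scaling $\mathring{\psi}(|\mathbf{x}-\mathbf{r}|)=a^{-3/2}\mathring{\psi}_1(a^{-1}|\mathbf{x}-\mathbf{r}|)$ of the wave-corpuscle. On the ball $\Omega_n$ one has $|\mathbf{x}-\mathbf{r}|\le R_n$, so \eqref{ama2} gives $|\mathbf{A}-\mathbf{A}_{\mathrm{aux}}|+|\varphi-\varphi_{\mathrm{aux}}|\le CR_n^2$ and \eqref{eme} gives $|\mathbf{E}-\mathbf{E}_{\mathrm{aux}}|+|\mathbf{B}-\mathbf{B}_{\mathrm{aux}}|\le CR_n$. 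Since $\rho'=0$ by \eqref{rp0}, all conditions involving $\rho'$ alone, namely \eqref{Q2p1} and \eqref{Q2p3}, hold trivially, and the $\bar\rho'$ contributions to $Q_3'$ in \eqref{Q3p} disappear as well.

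Next I handle the volume-integral conditions \eqref{Ppto0} and \eqref{florto0}. Plugging \eqref{amaaux} and \eqref{Jp} into the bound for $\mathbf{P}'$ and $\mathbf{J}'$ yields $|\mathbf{P}'|+|\mathbf{J}'|\le CR_n^2\mathring{\psi}^2$; together with \eqref{florpa} and the bounds on $\nabla S$, $\mathbf{A}_{\mathrm{aux}}$, $\mathbf{B}$ one likewise gets $|\mathbf{f}'|\le CR_n\mathring{\psi}^2$. Since $\int_{\Omega_n}\mathring{\psi}^2\,\mathrm{d}\mathbf{x}\le \int_{\mathbb{R}^3}\mathring{\psi}_1^2\,\mathrm{d}\mathbf{y}<\infty$ by the scaling computation already done in \eqref{limrho}, all three volume integrals are bounded by $CR_n\to 0$ uniformly on $[T_-,T_+]$.

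For the surface-integral conditions \eqref{Ppdto0}, \eqref{Tijpto0} and the $\mathbf{J}'$-piece of \eqref{Q2p2} and \eqref{Q3pto0}, I rerun the computation of \eqref{intdom0}: the surface integral of $\mathring{\psi}^2$ over $\partial\Omega_n$ equals $4\pi aR_n^{-2}\theta_n^4\,\mathring{\psi}_1^2(\theta_n)$, which is $O(aR_n^{-2})$ under the decay hypothesis \eqref{psi1}. Multiplying by the extra factor $CR_n^2$ coming from $|\mathbf{P}'|$, $|\mathbf{J}'|$ or (after including the explicit $(\mathbf{x}-\mathbf{r})$ weight in \eqref{Q2p2}) by $CR_n^3$, I obtain surface integrands of order $Ca$ or $CaR_n$, which go to zero by \eqref{ar4}. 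Under time-integration on the finite interval $[T_-,T_+]$ these estimates remain uniform.

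The one step that requires a little more care is \eqref{Tijpto0}, since $T^{ij\prime}$ is genuinely quadratic in the covariant derivatives and depends on $\mathbf{A}$ through $\tilde{\nabla}=\nabla-\mathrm{i}q\mathbf{A}/(\chi\mathrm{c})$ and on $\varphi$ through $\tilde{\partial}_t$. Here I would expand $T^{ij}(\varphi,\mathbf{A})-T^{ij}(\varphi_{\mathrm{aux}},\mathbf{A}_{\mathrm{aux}})$ in the differences $\mathbf{A}-\mathbf{A}_{\mathrm{aux}}$ and $\varphi-\varphi_{\mathrm{aux}}$, using \eqref{Pim} and the fact that $\nabla S$ and $\mathbf{A}_{\mathrm{aux}}$ are bounded, so every term in the expansion picks up at least one factor $|\mathbf{A}-\mathbf{A}_{\mathrm{aux}}|+|\varphi-\varphi_{\mathrm{aux}}|\le CR_n^2$ multiplying either $\mathring{\psi}^2$ or $|\nabla\mathring{\psi}|^2$. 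The surface integral of $|\nabla\mathring{\psi}|^2$ over $\partial\Omega_n$ was already computed in \eqref{intdom} and is $O(aR_n^{-4})$ under \eqref{rpsi1}; combining with the $R_n^2$ prefactor and \eqref{ar4} gives the desired limit. This is the main technical step; once it is in place the lemma follows by assembling the resulting bounds into \eqref{Q0p}, \eqref{Q2p} and \eqref{Q3p}.
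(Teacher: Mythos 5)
Your proposal is correct and follows essentially the same route as the paper's proof: it rests on the pointwise bounds (\ref{ama2}), (\ref{eme}), the identity $\rho'=0$, the volume estimate (\ref{limrho}), and the surface estimates (\ref{intdom})--(\ref{intdom0}) combined with the decay conditions (\ref{psi1}), (\ref{rpsi1}) and the scaling relation (\ref{ar4}). The only cosmetic difference is that you absorb the cross terms $|\mathring{\psi}_a||\nabla\mathring{\psi}_a|$ in $T^{ij\prime}$ directly into the bound by $\mathring{\psi}_a^2+|\nabla\mathring{\psi}_a|^2$, which is exactly the elementary inequality the paper invokes explicitly.
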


\begin{proof}
The proof is based on inequalities (\ref{eme}) and (\ref{ama2}). To obtain (%
\ref{Ppto0}) we use (\ref{amaaux}) and (\ref{limrho}):%
\begin{gather*}
\left\vert \int_{\Omega \left( \mathbf{r}(t),R_{n}\right) }\mathbf{P}%
^{\prime }\left( t\right) \,\mathrm{d}\mathbf{x}\right\vert \leq
C\int_{\Omega \left( \mathbf{r}(t),R_{n}\right) }\left\vert \mathbf{x}-%
\mathbf{r}\right\vert ^{2}\mathring{\psi}_{a}^{2}\left( \left\vert \mathbf{x}%
-\mathbf{r}\right\vert \right) \,\mathrm{d}\mathbf{x} \\
\leq CR_{n}^{2}\int_{\Omega \left( \mathbf{0},R_{n}/a\right) }\mathring{\psi}%
_{a}^{2}\left( \left\vert \mathbf{y}\right\vert \right) \,\mathrm{d}\mathbf{y%
}\leq C_{1}R_{n}^{2},
\end{gather*}%
and we obtain (\ref{Ppto0}) for $R_{n}\rightarrow 0$. To \ obtain (\ref%
{Ppdto0}) we observe that according to \ (\ref{amaaux}) 
\begin{gather}
\left\vert \int_{\partial \Omega \left( \mathbf{r}(t),R_{n}\right) }\mathbf{P%
}^{\prime }\mathbf{\hat{v}}\cdot \mathbf{\bar{n}}\mathrm{d}\sigma \right\vert
\label{ipdom} \\
\leq C\int_{\partial \Omega \left( \mathbf{r}(t),R_{n}\right) }\left\vert 
\mathbf{x}-\mathbf{r}\right\vert ^{2}\mathring{\psi}_{a}^{2}\left(
\left\vert \mathbf{x}-\mathbf{r}\right\vert \right) \mathrm{d}\sigma =4\pi
CR_{n}^{4}a^{-3}\mathring{\psi}_{a}^{2}\left( R_{n}/a\right)  \notag \\
=4\pi CR_{n}\theta ^{3}\mathring{\psi}_{a}^{2}\left( \theta \right)
\rightarrow 0  \notag
\end{gather}%
\ Similarly we obtain \ and (\ref{Jp}). A straightforward estimate of (\ref%
{florpa}) yields (\ref{florto0}). Note that according to (\ref{emfr10}) and (%
\ref{emfr8}) the terms in $T^{ii}$ involving $G$ do not depend on $\left(
\varphi ,\mathbf{A}\right) $, and hence $G$ does not enter $T^{ij\prime }$.\
According to (\ref{Tij}), (\ref{emfr10}), (\ref{psdtps}) and (\ref{covNLS}) $%
T^{ij}\left( \varphi ,\mathbf{A}\right) $ is a quadratic function of
potentials and $T^{ij\prime }$ equals sum of terms, every of which involves
factors $\mathbf{A-A}_{\mathrm{aux}}\ $or \ $\varphi -\varphi _{\mathrm{aux}%
} $ which satisfy (\ref{ama2}) and also factors $\psi \nabla \psi ^{\ast }$
or $\nabla \psi \psi ^{\ast }$ \ or $\psi \psi ^{\ast }$. The factors $%
\mathbf{A-A}_{\mathrm{aux}}$ or \ $\varphi -\varphi _{\mathrm{aux}}$\ in $%
\Omega \left( \mathbf{r}(t),R_{n}\right) $ produce coefficient $CR_{n}^{2}$.
All the terms are easily estimated, for example an elementary inequality 
\begin{equation*}
\int_{\partial \Omega \left( \mathbf{r}(t),R_{n}\right) }\left\vert 
\mathring{\psi}_{a}\right\vert \left\vert \nabla \mathring{\psi}%
_{a}\right\vert \mathrm{d}\sigma \leq \frac{1}{2}\int_{\partial \Omega
\left( \mathbf{r}(t),R_{n}\right) }\left\vert \mathring{\psi}_{a}\right\vert
^{2}+\left\vert \nabla \mathring{\psi}_{a}\right\vert ^{2}\mathrm{d}\sigma
\end{equation*}%
allows to use (\ref{intdom})-(\ref{intdom0}), and we obtain that 
\begin{equation*}
\int_{\partial \Omega \left( \mathbf{r}(t),R_{n}\right) }\left\vert 
\mathring{\psi}_{a}\right\vert \left\vert \nabla \mathring{\psi}%
_{a}\right\vert \mathrm{d}\sigma \rightarrow 0.
\end{equation*}%
Therefore (\ref{Tijpto0}) is fulfilled. \ Since $\rho ^{\prime }=0$ \ (\ref%
{Q2p1}) and (\ref{Q2p3}) are fulfilled, (\ref{Q2p2}) follows from \ref{Ppto0}%
) and (\ref{PmqJ}). To check condition (\ref{Q3pto0}) we note that $\rho
^{\prime }=0$ and after using (\ref{PmqJ}) we estimate the surface integral
in (\ref{Q3p}) involving $\mathbf{J}^{\prime }$\ similarly to (\ref{ipdom}).
\end{proof}

\begin{theorem}
\label{TCq0sq}Let potentials $\varphi \left( t,\mathbf{x}\right) $, $\mathbf{%
A}\left( t,\mathbf{x}\right) $ be twice continuously differentiable in a
domain $D$. Let form factor $\mathring{\psi}_{1}$ satisfy conditions (\ref%
{psi1}), (\ref{rpsi1}). Let sequences $a_{n},R_{n}$ satisfy (\ref{ar4}). Let 
$\mathbf{r}(t)$ be a solution of \ equations (\ref{flor})-(\ref{flor1}) with
trajectory in $D$. \ Then wave-corpuscles constructed \ in Theorem \ref%
{Twcconc} based on $\mathbf{r}(t)$ and potentials $\varphi _{\mathrm{aux}},%
\mathbf{A}_{\mathrm{aux}}$ given by (\ref{fiaux}) provide asymptotic
solutions in the sense of Definition \ref{Dconcas} that concentrate at $%
\mathbf{r}(t)$.
\end{theorem}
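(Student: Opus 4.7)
The plan is to combine Theorem \ref{Twcem} (which yields \emph{exact} wave-corpuscle solutions for the auxiliary potentials) with Theorem \ref{Twcconc} (which says those solutions concentrate at $\mathbf{r}(t)$), and then show that when the auxiliary potentials are replaced by the true potentials $\varphi,\mathbf{A}$, the resulting discrepancies in the conservation laws are absorbed as asymptotically vanishing perturbations of the type allowed by Definition \ref{Dconcas}. Concretely, I would first invoke Theorem \ref{Twcem} with the auxiliary potentials $\varphi_{\mathrm{aux}},\mathbf{A}_{\mathrm{aux}}$ of (\ref{fiaux}): the hypothesis that $\mathbf{r}(t)$ satisfies Newton's law (\ref{flor})--(\ref{flor1}) is precisely the point balance condition (\ref{eq0}), and the quadratic term $\varphi_{2}$ is chosen according to (\ref{fiex2a}); hence the wave-corpuscle $\psi_n$ of the form (\ref{psil0a})--(\ref{Sdef}) is an exact solution of the NLS equation driven by $(\varphi_{\mathrm{aux}},\mathbf{A}_{\mathrm{aux}})$, and therefore satisfies the conservation laws (\ref{schr9}) and (\ref{momNLS}) exactly with $\rho,\mathbf{J},\mathbf{P},T^{ij},\mathbf{f}$ evaluated at $(\varphi_{\mathrm{aux}},\mathbf{A}_{\mathrm{aux}})$.

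Next, I would rewrite those exact conservation laws as the approximate laws (\ref{schr9q})--(\ref{momNLSq}) for the same $\psi_n$ but with $\rho,\mathbf{J},\mathbf{P},T^{ij},\mathbf{f}$ now understood as the quantities associated with the true potentials $(\varphi,\mathbf{A})$. The discrepancies $\rho',\mathbf{J}',\mathbf{P}',T^{ij\prime},\mathbf{f}'$ defined by (\ref{pp})--(\ref{tjp}) and (\ref{rp0}) then represent exactly the gap between evaluating conserved densities at $(\varphi,\mathbf{A})$ versus $(\varphi_{\mathrm{aux}},\mathbf{A}_{\mathrm{aux}})$. Because the construction makes $\varphi-\varphi_{\mathrm{aux}}$ and $\mathbf{A}-\mathbf{A}_{\mathrm{aux}}$ vanish to second order at $\mathbf{r}(t)$, we have the pointwise estimates (\ref{ama2}) and (\ref{eme}), so all these differences carry an extra factor $|\mathbf{x}-\mathbf{r}|^{2}$ (or $|\mathbf{x}-\mathbf{r}|$ for fields). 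Together with the decay hypotheses (\ref{psi1}), (\ref{rpsi1}) on the form factor and the scaling (\ref{ar4}) on $a_n,R_n$, this is exactly the setting of the lemma immediately preceding Theorem \ref{TCq0sq}, which establishes (\ref{Ppto0})--(\ref{florto0}), (\ref{Q2p1})--(\ref{Q2p3}), and (\ref{Q3pto0}), and hence implies (\ref{Q0pp0}), (\ref{Q2pto0}), (\ref{Q3pto0}).

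Finally, to complete the verification of Definition \ref{Dconcas}, I would appeal to Theorem \ref{Twcconc}: under the very same decay conditions (\ref{psi1})--(\ref{rpsi1}) and scaling (\ref{ar4}), all of the conditions (ii)--(vi) of Definition \ref{Dlocconverges} have already been established for wave-corpuscles --- in particular (\ref{Egrcs}), (\ref{rconv}), (\ref{locpsibounds}), the surface-integral vanishings (\ref{Q0Tto0})--(\ref{Q23to0}), and (\ref{Q3to0}) --- and these properties depend only on $\mathring{\psi}$, $S$, and bounds on the potentials in $\Omega_n$, all of which remain valid when we read $\psi_n$ against the original $(\varphi,\mathbf{A})$. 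Condition (i) of Definition \ref{Dlocconverges} is replaced, in Definition \ref{Dconcas}, by the approximate conservation laws plus the three vanishing conditions supplied by the preceding lemma. Putting these pieces together yields concentration at $\mathbf{r}(t)$ in the asymptotic sense. The main obstacle, which is really the heart of the argument and is already handled in the preceding lemma, is ensuring that the quadratic vanishing of $\varphi-\varphi_{\mathrm{aux}}$ and $\mathbf{A}-\mathbf{A}_{\mathrm{aux}}$ at the trajectory, combined with the decay of $\mathring{\psi}_1$ and the scale separation $a_n\ll R_n\to 0$ in (\ref{ar4}), is strong enough to kill every volume and surface integral appearing in the perturbation quantities $Q_0',Q_2',Q_3'$ --- this is where the decay exponents in (\ref{psi1})--(\ref{rpsi1}) and the ratio restriction $a_nR_n^{-4}\to 0$ are used sharply.
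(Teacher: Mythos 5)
Your proposal follows essentially the same route as the paper: construct the exact wave-corpuscle for the auxiliary potentials via Theorem \ref{Twcem}, recast its exact conservation laws as the perturbed laws (\ref{schr9q})--(\ref{momNLSq}) with discrepancies (\ref{pp})--(\ref{tjp}), (\ref{rp0}), control those discrepancies through the second-order vanishing estimates (\ref{ama2}), (\ref{eme}) and the lemma preceding the theorem, and import conditions (ii)--(vi) of Definition \ref{Dlocconverges} from Theorem \ref{Twcconc}. The argument is correct and matches the paper's proof in structure and in all essential details.
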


As a corollary we obtain the following theorem.

\begin{theorem}
\label{Ttrconv}Let $\varphi \left( t,\mathbf{x}\right) $, $\mathbf{A}\left(
t,\mathbf{x}\right) $ are defined and twice continuously differentiable in a
domain $D$. Then a trajectory $\mathbf{r}(t)$ which lies in $D$ is a
concentration trajectory of asymptotic solutions of NLS equation (\ref{NLS})
if and only if \ it satisfies Newton's law (\ref{flor}) with the Lorentz
force defined by (\ref{flor1}).
\end{theorem}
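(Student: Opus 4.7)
The statement is a biconditional equivalence, so the plan is to prove each direction separately by appealing to the two main theorems already established in the preceding sections, namely Theorem \ref{Cq0sa} and Theorem \ref{TCq0sq}. Both results have been done; essentially nothing new needs to be computed, and the proof is really just an assembly step that packages them into an ``if and only if'' statement. I do not expect a serious obstacle here, since the hypotheses of both theorems are compatible with each other (both require only that $\varphi$ and $\mathbf{A}$ be twice continuously differentiable in $D$ and that the trajectory lie in $D$).

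For the necessity direction (``only if''), I would assume that $\mathbf{r}(t)$ is a concentration trajectory in the asymptotic sense, i.e., that there exists a sequence of asymptotic solutions $\psi_n$ in the sense of Definition \ref{Dconcas} which concentrates at $\mathbf{r}(t)$. Then I take $\mathbf{\hat r}(t) = \mathbf{r}(t)$ and apply Theorem \ref{Cq0sa} with the limit potentials $\varphi_\infty$, $\mathbf{A}_\infty$ given by the linearization (\ref{fiarestr}) of $\varphi$, $\mathbf{A}$ at the trajectory. The conclusion of that theorem is precisely that $\mathbf{r}(t)$ satisfies Newton's law (\ref{New2sa}), which is (\ref{flor})--(\ref{flor1}).

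For the sufficiency direction (``if''), I would assume that $\mathbf{r}(t)$ is a $C^2$ solution of the Newton equation (\ref{flor})--(\ref{flor1}) with trajectory inside $D$, and invoke Theorem \ref{TCq0sq}: one chooses any radial form factor $\mathring\psi_1$ satisfying the decay conditions (\ref{psi1}), (\ref{rpsi1}), any pair of sequences $a_n, R_n$ satisfying (\ref{ar4}), and builds the auxiliary potentials $\varphi_{\mathrm{aux}}, \mathbf{A}_{\mathrm{aux}}$ by (\ref{fiaux}). The wave-corpuscles $\psi_n$ constructed in Theorem \ref{Twcem} relative to $\varphi_{\mathrm{aux}}, \mathbf{A}_{\mathrm{aux}}$ are then, by Theorem \ref{TCq0sq}, a concentrating sequence of asymptotic solutions of the NLS equation (\ref{NLS}) with the original potentials $\varphi, \mathbf{A}$, and they concentrate at $\mathbf{r}(t)$.

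The only subtle point, and the one I would be most careful about in writing this out, is to verify that the two directions really invoke a consistent notion of ``concentration trajectory of asymptotic solutions'', which is Definition \ref{Dconcas}; this is precisely what both Theorem \ref{Cq0sa} (as conclusion about) and Theorem \ref{TCq0sq} (as conclusion providing) use, so the two halves glue together without any further work. I would close with the observation that this equivalence can be interpreted as wave-particle duality: Newtonian trajectories in the given external EM field are exactly the trajectories along which asymptotic NLS solutions can concentrate, independent of the specific nonlinearity $G_a'$ and of the parameter $\chi$.
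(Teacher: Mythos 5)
Your proposal is correct and matches the paper's own reasoning: the paper presents Theorem \ref{Ttrconv} precisely as a corollary, with the ``only if'' direction supplied by Theorem \ref{Cq0sa} and the ``if'' direction by the wave-corpuscle construction of Theorem \ref{TCq0sq}. Nothing further is needed.
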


Since asymptotic solutions in the sense of Definition \ref{Dconcas} \ in
Theorem \ref{TCq0sq} are constructed as wave-corpuscles, we obtain the
following corollary.

\begin{corollary}
\label{Cwccon}If a sequence of concentrating asymptotic solutions
concentrates at a trajectory $\mathbf{r}(t)$, then there exists a sequence
of concentrating wave-corpuscle asypmptotic solutions which concentrate at
the same trajectory.
\end{corollary}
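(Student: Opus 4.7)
The plan is to chain the two principal results of Section \ref{Spoint}. First I would invoke Theorem \ref{Cq0sa} applied to the given sequence of concentrating asymptotic solutions $\psi_n$. Its hypotheses---twice continuous differentiability of $\varphi, \mathbf{A}$ on a domain $D$ containing the trajectory, and that the limit potentials are the linearizations of $\varphi, \mathbf{A}$ at $\mathbf{\hat{r}}$ as in (\ref{fiarestr})---are built into Definition \ref{Dconcas} (via Definition \ref{Dlocconverges} and Example \ref{EconcNLS}). The conclusion of that theorem is that the trajectory $\mathbf{r}(t)=\mathbf{\hat{r}}(t)$ satisfies Newton's equation (\ref{New2sa}), which is the same system as (\ref{flor})--(\ref{flor1}).

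Next I would feed this Newtonian trajectory back into Theorem \ref{TCq0sq}. That theorem is precisely the converse assertion: starting from any trajectory $\mathbf{r}(t) \subset D$ that satisfies (\ref{flor})--(\ref{flor1}), together with any admissible form factor $\mathring{\psi}_{1}$ meeting the decay assumptions (\ref{psi1})--(\ref{rpsi1}) and any scale sequences $a_n, R_n$ obeying (\ref{ar4}), it manufactures an explicit wave-corpuscle sequence---built via Theorem \ref{Twcem} from the auxiliary potentials $\varphi_{\mathrm{aux}}, \mathbf{A}_{\mathrm{aux}}$ of (\ref{fiaux})---that is itself a concentrating asymptotic solution of the NLS equation at $\mathbf{r}(t)$ in the sense of Definition \ref{Dconcas}. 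Composing the two steps yields exactly the conclusion of the corollary.

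There is no genuine obstacle here: the corollary is essentially the juxtaposition of the ``only if'' and ``if'' halves of Theorem \ref{Ttrconv}. The only subtlety worth flagging is that the wave-corpuscle sequence produced in the second step is entirely independent of the original sequence $\psi_n$; in particular, the choice of form factor $\mathring{\psi}_{1}$, of the scales $a_n, R_n$, and hence of the limiting charge value $\bar{\rho}_\infty$ are free parameters of the construction, and typically none of them will match the corresponding data of $\psi_n$. The corollary asserts only the existence of \emph{some} wave-corpuscle asymptotic sequence sharing the concentration trajectory with the given sequence, and this is exactly what the composition delivers.
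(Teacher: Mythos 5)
Your proposal is correct and follows exactly the paper's intended argument: the corollary is obtained by chaining Theorem \ref{Cq0sa} (concentration forces the trajectory to satisfy Newton's law) with Theorem \ref{TCq0sq} (any Newtonian trajectory in $D$ carries a concentrating wave-corpuscle asymptotic sequence). Your remark that the constructed wave-corpuscle sequence is independent of the original data is a fair observation and consistent with the existence-only statement of the corollary.
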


\section{Appendix 1: Lagrangian field formalism for nonlinear Schr\"{o}%
dinger Equation \ \label{sschrodinger}}

To derive the conservation laws for the NLS equation it is convenient to use
relativistic notation. We introduce the following 4-differential operators \ 
\begin{equation}
\partial _{\mu }=\left( \frac{1}{\mathrm{c}}\partial _{t},\nabla \right) ,\
\partial ^{\mu }=\left( \frac{1}{\mathrm{c}}\partial _{t},-\nabla \right) ,
\end{equation}%
where the indices $\mu $ \ take four values \ $\mu =0,1,2,3$ \ and count
components of the right-hand sides; in particular $\partial _{0}=\frac{1}{%
\mathrm{c}}\partial _{t}$. We also denote 
\begin{equation}
\psi ^{,\mu }=\partial ^{\mu }\psi ,\qquad \psi _{,\mu }=\partial _{\mu
}\psi .
\end{equation}%
The EM 4-potential is given by the formula 
\begin{equation}
A=A^{\mu }=\left( \varphi ,\mathbf{A}\right) ,  \label{maxw2a}
\end{equation}%
where $\varphi \left( t,\mathbf{x}\right) ,\mathbf{A}\left( t,\mathbf{x}%
\right) $ are given potentials, and the EM power tensor \ is defined as
follows: 
\begin{equation}
F^{\mu \nu }=\partial ^{\mu }A^{\nu }-\partial ^{\nu }A^{\mu }.
\label{maxw2b}
\end{equation}%
The covariant derivatives which involve the EM fields are defined by (\ref%
{covNLS}), and corresponding 4-differential operators have the form 
\begin{equation}
\tilde{\partial}_{\mu }=\left( \frac{1}{\mathrm{c}}\tilde{\partial}_{t},%
\tilde{\nabla}\right) ,\ \tilde{\partial}^{\mu }=\left( \frac{1}{\mathrm{c}}%
\tilde{\partial}_{t},-\tilde{\nabla}\right) .  \label{maxw2c}
\end{equation}%
We also use the following notation for covariant derivatives%
\begin{equation}
\psi ^{;\mu }=\tilde{\partial}^{\mu }\psi ,\qquad \psi ^{;\mu \ast }=\tilde{%
\partial}^{\mu \ast }\psi ^{\ast },\qquad \psi _{;\mu }=\tilde{\partial}%
_{\mu }\psi ,\qquad \psi _{\ell ;\mu }^{\ast }=\tilde{\partial}_{\mu }^{\ast
}\psi ^{\ast }.  \label{flagr6b}
\end{equation}%
Obviously,%
\begin{equation}
\psi ^{;\mu }=\psi ^{,\mu }+\frac{\mathrm{i}q}{\chi \mathrm{c}}A^{\mu }\psi .
\label{comsem}
\end{equation}%
The NLS equation (\ref{NLS}) is the Euler-Lagrange field equation for the
following Lagrangian density: 
\begin{equation}
L=\mathrm{i}\frac{\chi }{2}\left[ \psi ^{\ast }\tilde{\partial}_{t}\psi
-\psi \tilde{\partial}_{t}^{\ast }\psi ^{\ast }\right] -\frac{\chi ^{2}}{2m}%
\left[ \tilde{\nabla}\psi \tilde{\nabla}^{\ast }\psi ^{\ast }+G\left( \psi
^{\ast }\psi \right) \right] \   \label{LNLS}
\end{equation}%
where $G\left( s\right) $ is given by (\ref{Gan}). The 4-current $J^{\nu }$
for the Lagrangian is defined by the formula 
\begin{equation}
J^{\nu }=-\mathrm{i}\frac{q}{\chi }\left( \frac{\partial L}{\partial \psi
_{;\nu }}\psi -\frac{\partial L}{\partial \psi _{;\nu }^{\ast }}\psi ^{\ast
}\right) ,  \label{flagr9}
\end{equation}%
it can be written in the form 
\begin{equation}
J=J^{\nu }=\left( \mathrm{c}\rho ,\mathbf{J}\right)  \label{maxw2a1}
\end{equation}%
where $\rho ,\mathbf{J}$ \ are given by (\ref{schr8}),(\ref{schr8a}). \ The
Lagrangian (\ref{LNLS}) and the NLS equation (\ref{NLS}) are gauge
invariant, that is invariant with respect to the multiplication of $\psi $
by $\mathrm{e}^{\mathrm{i}\gamma }$ with real $\gamma $: 
\begin{equation}
L\left( \mathrm{e}^{\mathrm{i}\gamma }\psi ,\mathrm{e}^{\mathrm{i}\gamma
}\psi _{;\mu },\mathrm{e}^{-\mathrm{i}\gamma }\psi ^{\ast },\mathrm{e}^{-%
\mathrm{i}\gamma }\psi _{\ell ;\mu }^{\ast }\right) =L\left( \psi ,\psi
_{;\mu },\psi ^{\ast },\psi _{;\mu }^{\ast }\right) .  \label{eisinv}
\end{equation}%
If we take derivative of the above conditon (\ref{eisinv}) with respect to $%
\ \gamma $ at $\gamma =0,$ we obtain the \ following structural restriction
on the Lagrangian $L$: 
\begin{equation}
\frac{\partial L}{\partial \psi _{;\mu }}\psi _{;\mu }-\frac{\partial L}{%
\partial \psi _{;\mu }^{\ast }}\psi _{;\mu }^{\ast }+\frac{\partial L}{%
\partial \psi }\psi _{\ell }-\frac{\partial L}{\partial \psi ^{\ast }}\psi
_{\ell }^{\ast }=0.  \label{eisinv0}
\end{equation}%
Direct verification shows that if a Lagrangian $L$ satisfies the above
structural condition, then the current defined by (\ref{flagr9}) for a
solution of the Euler-Lagrange field equation (NLS in our case) satisfies
the continuity equation%
\begin{equation}
\partial _{\nu }J^{\nu }=0\text{ }  \label{flagr11}
\end{equation}%
which can be written in the form (\ref{schr9}). \ 

We introduce the \emph{\ energy-momentum tensor} (EnMT) (see \cite{Barut}
for the general theory) for the NLS by the following formula: 
\begin{equation}
T^{\mu \nu }=\frac{\partial L}{\partial \psi _{;\mu }}\psi ^{;\nu }+\frac{%
\partial L}{\partial \psi _{;\mu }^{\ast }}\psi ^{;\nu \ast }-g^{\mu \nu }L,
\label{enmom5}
\end{equation}%
where $g^{\mu \nu }$ is the Minkowski metric tensor, that is%
\begin{equation}
g^{\mu \nu }=0\text{ for }\mu \neq \nu ,\quad g^{00}=1,\quad \text{and }%
g^{jj}=-1\text{ for }j=1,2,3.
\end{equation}

\begin{proposition}
Let EnMT $T^{\mu \nu }$\ be defined by formula (\ref{enmom5}) $\ $for a
solution of the NLS equation (\ref{NLS}). Then $T^{\mu \nu }$ satisfies the
following EnMT conservation law 
\begin{equation}
\partial _{\mu }T^{\mu \nu }=f^{\nu }  \label{contmnlaw}
\end{equation}%
where $f^{\nu }$ is the \emph{Lorentz force density} defined by the formula 
\begin{equation}
f^{\nu }=\frac{1}{\mathrm{c}}J_{\mu }F^{\nu \mu }  \label{flord}
\end{equation}%
with $J_{\mu }$ and $\ F^{\nu \mu }$ defined by respectively by (\ref{flagr9}%
) and (\ref{maxw2b}).
\end{proposition}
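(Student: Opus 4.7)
The plan is to verify the identity by direct computation of $\partial_\mu T^{\mu\nu}$, using three inputs: the Euler--Lagrange equations for $\psi$ and $\psi^\ast$ derived from the Lagrangian (\ref{LNLS}), the gauge-invariance identity (\ref{eisinv0}), and the fact that $\partial_\mu\psi^{;\nu}-\partial^\nu\psi_{;\mu}$ produces the EM strength tensor $F$ via the commutator of covariant derivatives.

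First I would differentiate the definition (\ref{enmom5}) to obtain
\[
\partial_\mu T^{\mu\nu}=\Bigl(\partial_\mu\tfrac{\partial L}{\partial\psi_{;\mu}}\Bigr)\psi^{;\nu}+\tfrac{\partial L}{\partial\psi_{;\mu}}\partial_\mu\psi^{;\nu}+\text{c.c.}-\partial^\nu L,
\]
and expand $\partial^\nu L$ by the chain rule, treating $L$ as a function of $\psi,\psi^\ast,\psi_{;\mu},\psi_{;\mu}^\ast$. Rewriting the standard Euler--Lagrange equations (derived with respect to ordinary derivatives) in the covariant form
\[
\tilde\partial_\mu\tfrac{\partial L}{\partial\psi_{;\mu}^\ast}=\tfrac{\partial L}{\partial\psi^\ast},\qquad\tilde\partial_\mu^\ast\tfrac{\partial L}{\partial\psi_{;\mu}}=\tfrac{\partial L}{\partial\psi},
\]
and using $\psi^{,\nu}=\psi^{;\nu}-\tfrac{\mathrm{i}q}{\chi\mathrm{c}}A^\nu\psi$ (with its conjugate) to rewrite the $\tfrac{\partial L}{\partial\psi}\psi^{,\nu}+\tfrac{\partial L}{\partial\psi^\ast}\psi^{\ast,\nu}$ piece of $\partial^\nu L$, the leading pieces of $\partial_\mu T^{\mu\nu}$ and $\partial^\nu L$ match up and cancel, leaving a residue that is entirely proportional to $A^\mu$.

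What survives then has two origins. The first is the non-commutativity term $\tfrac{\partial L}{\partial\psi_{;\mu}}(\partial_\mu\psi^{;\nu}-\partial^\nu\psi_{;\mu})+\text{c.c.}$, where a short calculation using (\ref{covNLS}) and (\ref{maxw2b}) splits each factor into an $F$-proportional part (namely $\tfrac{\mathrm{i}q}{\chi\mathrm{c}}F^{\nu\mu}\psi$ and its conjugate) and an $A^\mu$-proportional part. The second origin is the $A^\mu$-correction collected from the covariant rewriting of the Euler--Lagrange equations and from the $A^\nu\psi$ shift in the chain rule for $\partial^\nu L$. The gauge-invariance identity (\ref{eisinv0}) is precisely what forces all of the $A^\mu$-proportional leftovers to cancel against each other, leaving the clean expression
\[
\partial_\mu T^{\mu\nu}=\tfrac{\mathrm{i}q}{\chi\mathrm{c}}F^{\nu\mu}\Bigl(\tfrac{\partial L}{\partial\psi_{;\mu}}\psi-\tfrac{\partial L}{\partial\psi_{;\mu}^\ast}\psi^\ast\Bigr),
\]
and the parenthesis on the right is exactly $\mathrm{i}\chi q^{-1}J_\mu$ by the definition (\ref{flagr9}) of the 4-current, giving $\partial_\mu T^{\mu\nu}=\tfrac{1}{\mathrm{c}}J_\mu F^{\nu\mu}=f^\nu$ as claimed.

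The main obstacle will be the careful bookkeeping of the two independent sources of $A^\mu$-correction terms (from the EL translation and from the chain-rule expansion) together with the $A^\mu$-pieces inside the commutator $\partial_\mu\psi^{;\nu}-\partial^\nu\psi_{;\mu}$, including the correct index-raising/lowering signs between $F^{\nu\mu}$, $F^\nu{}_\mu$ and $F_\mu{}^\nu$. These three contributions must pair up in exactly the combination predicted by (\ref{eisinv0}) for the cancellation to work; once this is verified, the final identification with $J_\mu F^{\nu\mu}/\mathrm{c}$ is immediate from (\ref{flagr9}) and requires no further input.
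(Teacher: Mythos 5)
Your proposal is correct, and it reaches the identity by a route that is organized differently from the paper's even though it rests on the same three inputs (the Euler--Lagrange equations, the explicit $A$-dependence hidden in the covariant derivatives, and the gauge invariance of $L$). The paper first converts to ordinary-derivative variables, rewriting $T^{\mu \nu }$ as $\frac{\partial L}{\partial \psi _{,\mu }}\psi ^{,\nu }+\frac{\partial L}{\partial \psi _{,\mu }^{\ast }}\psi ^{,\nu \ast }-\frac{1}{\mathrm{c}}J^{\mu }A^{\nu }-g^{\mu \nu }L$; it then disposes of the $\partial _{\mu }\left( J^{\mu }A^{\nu }\right) $ term via the continuity equation $\partial _{\mu }J^{\mu }=0$ (which is itself the consequence of (\ref{eisinv0}) combined with the field equations), applies the Euler--Lagrange equations in ordinary-derivative form, and lets $F^{\nu \mu }$ appear only in the last line as the difference $\partial ^{\nu }A^{\mu }-\partial ^{\mu }A^{\nu }$ of two separately generated terms. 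You instead stay in covariant variables throughout, produce $F$ from the commutator $\partial _{\mu }\psi ^{;\nu }-\partial ^{\nu }\psi _{;\mu }$, and cancel the residual $A$-proportional terms by invoking (\ref{eisinv0}) directly; this is the more standard minimal-coupling derivation, it makes the gauge origin of the Lorentz force manifest, and it applies verbatim to any gauge-invariant Lagrangian, at the price of heavier index bookkeeping. Two small cautions if you carry the computation out: the cancellation of the $A$-proportional leftovers uses (\ref{eisinv0}) only for the terms linear in $A$, and there remains a pair of quadratic-in-$A$ remainders (one from the covariant rewriting of the Euler--Lagrange term contracted with $\psi ^{;\nu }-\psi ^{,\nu }$, one left over after applying (\ref{eisinv0})) that must be matched against each other; and your displayed intermediate formula, read literally with (\ref{flagr9}), gives $-\frac{1}{\mathrm{c}}J_{\mu }F^{\nu \mu }$, so the index placement on $F$ coming out of the commutator (which actually yields $\partial _{\mu }A^{\nu }-\partial ^{\nu }A_{\mu }=-F^{\nu }{}_{\mu }$) supplies the compensating sign you flagged as needing verification.
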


\begin{proof}
According to (\ref{comsem}) $\ $and (\ref{flagr9} the definition (\ref%
{enmom5}) can be written in the form 
\begin{gather*}
T^{\mu \nu }=\frac{\partial L}{\partial \psi _{;\mu }}\psi ^{,\nu }+\frac{%
\partial L}{\partial \psi _{;\mu }}\frac{\mathrm{i}q}{\chi \mathrm{c}}A^{\nu
}\psi +\frac{\partial L}{\partial \psi _{;\mu }^{\ast }}\psi ^{,\nu \ast }-%
\frac{\partial L}{\partial \psi _{;\mu }^{\ast }}\frac{\mathrm{i}q}{\chi 
\mathrm{c}}A^{\nu }\psi ^{\ast }-g^{\mu \nu }L \\
=\frac{\partial L}{\partial \psi _{,\mu }}\psi ^{,\nu }+\frac{\partial L}{%
\partial \psi _{,\mu }^{\ast }}\psi ^{,\nu \ast }-\frac{1}{\mathrm{c}}J^{\mu
}A^{\nu }-g^{\mu \nu }L.
\end{gather*}%
where $J^{\nu }$ is defined by (\ref{flagr9}). We differentiate the above
expression \ and use the continuity equation (\ref{flagr11}):%
\begin{gather*}
\partial _{\mu }T^{\mu \nu }=\partial _{\mu }\left( \frac{\partial L}{%
\partial \psi _{;\mu }}\psi ^{,\nu }\right) +\partial _{\mu }\left( \frac{%
\partial L}{\partial \psi _{;\mu }^{\ast }}\psi ^{,\nu \ast }\right)
-\partial _{\mu }\left( J^{\mu }A^{\nu }\right) -\partial _{\mu }\left(
g^{\mu \nu }L\right) \\
=\partial _{\mu }\frac{\partial L}{\partial \psi _{;\mu }}\psi ^{,\nu }+%
\frac{\partial L}{\partial \psi _{;\mu }}\partial _{\mu }\psi ^{,\nu
}+\partial _{\mu }\frac{\partial L}{\partial \psi _{;\mu }^{\ast }}\psi
^{,\nu \ast }+\frac{\partial L}{\partial \psi _{;\mu }^{\ast }}\partial
_{\mu }\psi ^{,\nu \ast }-\frac{1}{\mathrm{c}}J^{\mu }\partial _{\mu }A^{\nu
}-g^{\mu \nu }\partial _{\mu }\left( L\right) .
\end{gather*}%
\ The NLS equation (\ref{NLS}) can be written in the form 
\begin{equation}
\ \frac{\partial L}{\partial \psi ^{\ast }}-\partial _{\mu }\frac{\partial L%
}{\partial \psi _{,\mu }^{\ast }}=0,  \label{flagr8c}
\end{equation}%
where the Lagrangian $L$ is considered as a function of $\psi ,\psi ^{\ast
},\psi _{,\mu }^{\ast }$ $\psi _{,\mu }^{\ast },$ and of the spatial and
time variables which enter through $A^{\nu }$. Using (\ref{flagr8c})
together with its conjugate \ we obtain that 
\begin{equation*}
\partial _{\mu }T^{\mu \nu }=-\frac{1}{\mathrm{c}}J^{\mu }\partial _{\mu
}A^{\nu }+\left[ \frac{\partial L}{\partial \psi }\partial ^{\nu }\psi +%
\frac{\partial L}{\partial \psi ^{\ast }}\partial ^{\nu }\psi ^{\ast }+\frac{%
\partial L}{\partial \psi _{,\mu }^{\ast }}\partial ^{\nu }\psi _{\mu
}^{\ast }+\frac{\partial L}{\partial \psi _{,\mu }}\partial ^{\nu }\psi
_{\mu }-\partial ^{\nu }\left( L\right) \right]
\end{equation*}%
Note that the expression in brackets equals the partial derivative $\partial
^{\nu }L$ (we denote $\partial ^{\nu }\left( \mathcal{L}\right) $ the
complete derivative of $\mathcal{L}$ and $\partial ^{\nu }\mathcal{L}$ the
partial one). Therefore 
\begin{equation*}
\partial _{\mu }T^{\mu \nu }=-\frac{1}{\mathrm{c}}J^{\mu }\partial _{\mu
}A^{\nu }+\partial ^{\nu }L,
\end{equation*}%
Note that the partial derivative $\partial ^{\nu }L$ can be evaluated as
follows: 
\begin{equation*}
\partial ^{\nu }L=\frac{\mathrm{i}q}{\chi \mathrm{c}}\left( \frac{\partial L%
}{\partial \psi _{;\mu }}\psi -\frac{\partial L}{\partial \psi _{;\mu
}^{\ast }}\psi ^{\ast }\right) \partial ^{\nu }A_{\mu }=-\frac{1}{\mathrm{c}}%
J^{\mu }\partial ^{\nu }A_{\mu }=-\frac{1}{\mathrm{c}}J_{\mu }\partial ^{\nu
}A^{\mu }.
\end{equation*}%
Hence, 
\begin{equation*}
\partial _{\mu }T^{\mu \nu }=-\frac{1}{\mathrm{c}}J_{\mu }\partial ^{\mu
}A^{\nu }+\frac{1}{\mathrm{c}}J_{\mu }\partial ^{\nu }A^{\mu },
\end{equation*}%
\ yielding the EnMT conservation law (\ref{contmnlaw}).\ 
\end{proof}

The entries of the EnMT can be interpreted as energy and momentum densities $%
u,p^{j}$ respectively, namely 
\begin{gather}
u=T^{00}=\frac{\chi ^{2}}{2m}\left[ \left\vert \tilde{\nabla}\psi
\right\vert ^{2}+G\left( \left\vert \psi \right\vert ^{2}\right) \right] ,
\label{emfr8} \\
p^{j}=T^{0j}=\mathrm{i}\frac{\chi }{2}\left( \psi \tilde{\partial}_{j}^{\ast
}\psi ^{\ast }-\psi ^{\ast }\tilde{\partial}_{j}\psi \right) .  \label{emfr9}
\end{gather}%
The formula for the momentum density can be written in the form (\ref{schr5}%
). \ The proportionality (\ref{PmqJ}) is a specific property of the NLS\
Lagrangian and does not hold in a general case. Remaining entries of the
EnMT \ take the form 
\begin{equation*}
T^{j0}=-\frac{\chi ^{2}}{2m}\left( \tilde{\partial}_{t}\psi \tilde{\partial}%
_{j}^{\ast }\psi ^{\ast }+\tilde{\partial}_{t}^{\ast }\psi ^{\ast }\tilde{%
\partial}_{j}\psi \right) ,\quad \ j=1,2,3,
\end{equation*}%
\begin{equation}
T^{ii}=u-\frac{\chi ^{2}}{m}\tilde{\partial}_{i}\psi \tilde{\partial}%
_{i}^{\ast }\psi ^{\ast }+\mathrm{i}\frac{\chi }{2}\left( \psi \tilde{%
\partial}_{t}^{\ast }\psi ^{\ast }-\psi ^{\ast }\tilde{\partial}_{t}\psi
\right) ,  \label{emfr10}
\end{equation}%
and for $i\neq j,\quad \ i,j=1,2,3.$%
\begin{equation}
T^{ij}=-\frac{\chi ^{2}}{2m}\left( \tilde{\partial}_{i}\psi \tilde{\partial}%
_{j}^{\ast }\psi ^{\ast }+\tilde{\partial}_{j}\psi \tilde{\partial}%
_{i}^{\ast }\psi ^{\ast }\right) .  \label{Tij}
\end{equation}%
Note that the Lorentz force density $f^{\nu }$ in (\ref{contmnlaw}) can be
written in the form 
\begin{equation}
f^{\nu }=\frac{1}{\mathrm{c}}J_{\mu }F^{\nu \mu }=\left( f^{0},\mathbf{f}%
\right) =\left( \frac{1}{\mathrm{c}}\mathbf{J}\cdot \mathbf{E},\rho \mathbf{E%
}+\frac{1}{\mathrm{c}}\mathbf{J}\times \mathbf{B}\right)  \label{emtn2b}
\end{equation}%
where, in particular, the momentum equation has the form (\ref{momNLS}).

\begin{remark}
Note that the derivation of the conservation law (\ref{contmnlaw}) is valid
for any local solution $\psi $ of the NLS equation which has continuous
second derivatives, $G$ is continuously differentiable on the range of $%
\left\vert \psi \right\vert ^{2}$ and the nonlinear term $G\left( \psi \psi
^{\ast }\right) $ which enters (\ref{LNLS}) \ has continuous first
derivatives. Note that the \ derivatives $G\left( \psi \psi ^{\ast }\right) $
\end{remark}

\section{Appendix 2: Splitting of a field into potential and sphere-tangent
parts \label{Svsplit}}

Here we describe splitting of a general vector field into sphere-tangent and
gradient components\ which was used in derivation of the properties of
wave-corpuscles.

\begin{lemma}
Any continuously differentiable vector field $\mathbf{V}\left( \mathbf{y}%
\right) $ can be uniquely splitted into a gradient field $\nabla P$\ and a
sphere-tangent field $\mathbf{\breve{V}}$ 
\begin{equation}
\mathbf{V}=\nabla P+\mathbf{\breve{V}.}  \label{vsplit}
\end{equation}%
where $P$ is continuously differentiable, $\mathbf{\breve{V}}$ is continuous
and satisfies the orthogonality condition (\ref{ahat0}), namely $\mathbf{%
\breve{V}}\cdot \mathbf{y=0}$.
\end{lemma}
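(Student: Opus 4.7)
The plan is to construct $P$ explicitly by radial integration, set $\mathbf{\breve{V}} = \mathbf{V} - \nabla P$, and then verify the orthogonality relation (\ref{ahat0}) by recognizing a total $t$-derivative. Specifically, I would define
\begin{equation*}
P(\mathbf{y}) = \int_0^1 \mathbf{V}(t\mathbf{y}) \cdot \mathbf{y} \, \mathrm{d}t.
\end{equation*}
Since $\mathbf{V}$ is continuously differentiable, standard differentiation under the integral sign yields $P \in C^1(\mathbb{R}^3)$, with
\begin{equation*}
\partial_i P(\mathbf{y}) = \int_0^1 \bigl[\, t\,(\partial_i V_j)(t\mathbf{y})\, y_j + V_i(t\mathbf{y}) \,\bigr]\, \mathrm{d}t,
\end{equation*}
continuity of $\nabla P$ following from the continuity of $\mathbf{V}$ and $\partial_i V_j$ together with dominated convergence on the compact interval $[0,1]$.

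The key computation is to verify $\mathbf{y} \cdot \nabla P = \mathbf{V} \cdot \mathbf{y}$. Multiplying the expression for $\partial_i P$ by $y_i$ and summing, the integrand collapses to a total $t$-derivative:
\begin{equation*}
y_i\, \partial_i P(\mathbf{y}) = \int_0^1 \frac{\mathrm{d}}{\mathrm{d}t}\bigl(\, t\, \mathbf{V}(t\mathbf{y}) \cdot \mathbf{y} \,\bigr)\, \mathrm{d}t = \mathbf{V}(\mathbf{y}) \cdot \mathbf{y}.
\end{equation*}
Hence $\mathbf{\breve{V}}(\mathbf{y}) \cdot \mathbf{y} = \mathbf{V}(\mathbf{y}) \cdot \mathbf{y} - \mathbf{y} \cdot \nabla P(\mathbf{y}) = 0$, so the orthogonality condition (\ref{ahat0}) holds, and $\mathbf{\breve{V}}$ is continuous as the difference of continuous fields.

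For uniqueness of the decomposition, suppose $\mathbf{V} = \nabla P_1 + \mathbf{\breve{V}}_1 = \nabla P_2 + \mathbf{\breve{V}}_2$ with both sphere-tangent parts satisfying (\ref{ahat0}). Put $Q = P_1 - P_2$, so $\nabla Q = \mathbf{\breve{V}}_2 - \mathbf{\breve{V}}_1$ and $\mathbf{y} \cdot \nabla Q(\mathbf{y}) \equiv 0$. This forces the radial derivative of $Q$ along every ray from the origin to vanish, so $Q$ is constant on each such ray; continuity of $Q$ at $\mathbf{0}$ then makes $Q$ globally constant, and consequently $\nabla Q \equiv 0$, yielding $\mathbf{\breve{V}}_1 = \mathbf{\breve{V}}_2$ and $\nabla P_1 = \nabla P_2$.

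The only genuine obstacle is establishing the smoothness of $P$ near the origin — a naive antiderivative in spherical coordinates would introduce a factor of $1/r$ and a spurious singularity — but the parametric formula above sidesteps this entirely, leaving only routine applications of dominated convergence.
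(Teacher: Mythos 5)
Your proof is correct and takes essentially the same route as the paper: your formula $P(\mathbf{y})=\int_{0}^{1}\mathbf{V}(t\mathbf{y})\cdot \mathbf{y}\,\mathrm{d}t$ is exactly the paper's radial integral $\int_{0}^{\left\vert \mathbf{y}\right\vert }\mathbf{\Omega }\cdot \mathbf{V}\left( \mathbf{\Omega }r\right) \,\mathrm{d}r$ after the substitution $r=t\left\vert \mathbf{y}\right\vert$, and the uniqueness argument (constancy of $Q$ along rays plus continuity at the origin) is the same. Your explicit verification of $\mathbf{y}\cdot \nabla P=\mathbf{V}\cdot \mathbf{y}$ via the total $t$-derivative, and the remark that the parametric form avoids the apparent $1/r$ singularity at the origin, are welcome additions of detail but not a different method.
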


\begin{proof}
To determine $P$ we multiply (\ref{vsplit}) by $\mathbf{y}$ and obtain \ 
\begin{equation*}
\mathbf{y}\cdot \mathbf{V}=\mathbf{y}\cdot \nabla P+\mathbf{y}\cdot \mathbf{%
\breve{V}=y}\cdot \nabla P
\end{equation*}%
The directional derivative $\mathbf{y}\cdot \nabla $ can be written as $r%
\mathbf{\partial }_{r}$ and \ using notation $\mathbf{y}=\mathbf{\Omega }r$
where\ $\left\vert \mathbf{\Omega }\right\vert =1$, $r=\left\vert \mathbf{y}%
\right\vert $\ we rewrite the above equation in the form 
\begin{equation*}
\mathbf{\Omega }\cdot \mathbf{V}=\mathbf{\partial }_{r}P\left( \mathbf{%
\Omega }r\right) .
\end{equation*}%
Note that $P$ \ is defined from the above equation up to a function $C\left( 
\mathbf{\Omega }\right) $; since the only function \ of this form which is
continuous at the origin must be a constant, $P$ \ is defined uniquely
modulo constants. We can find $P$ by integration: 
\begin{equation}
P\left( \mathbf{y}\right) =\int_{0}^{\left\vert \mathbf{y}\right\vert }%
\mathbf{\Omega }\cdot \mathbf{V}\left( \mathbf{\Omega }r\right) \,\mathrm{d}%
r=\int_{0}^{\left\vert \mathbf{y}\right\vert }\frac{1}{\left\vert \mathbf{y}%
\right\vert }\mathbf{y}\cdot \mathbf{V}\left( \frac{1}{\left\vert \mathbf{y}%
\right\vert }\mathbf{y}r\right) \,\mathrm{d}r.  \label{P3}
\end{equation}%
If the potential $P$ is defined by (\ref{P3}) then $\mathbf{y}\cdot \mathbf{%
\breve{V}}=\mathbf{y}\cdot \mathbf{V-y}\cdot \nabla P$ \ satisfies (\ref%
{ahat0}).
\end{proof}

Now we provide some explicit formulas for polynomial fields $\mathbf{V}$.

To obtain an explicit expression for $P$, we assume that $\mathbf{V}$ and \ $%
P$ \ are expanded into series of homogenious expressions:%
\begin{equation}
\mathbf{V}\left( \mathbf{y}\right) =\sum_{j}\mathbf{V}_{j}\left( \mathbf{y}%
\right) ,\quad P=\sum_{j}P_{j}\left( \mathbf{y}\right) ,\quad \mathbf{\breve{%
V}}\left( \mathbf{y}\right) =\sum_{j}\mathbf{\breve{V}}_{j}\left( \mathbf{y}%
\right) ,  \label{afihom}
\end{equation}%
where%
\begin{equation*}
\mathbf{V}_{j}\left( \zeta \mathbf{y}\right) =\zeta ^{j}\mathbf{A}_{j}\left( 
\mathbf{y}\right) ,\quad P_{j}\left( \zeta \mathbf{y}\right) =\zeta
^{j}\varphi _{j}\left( \mathbf{y}\right) .
\end{equation*}%
\ For a $j$-homogenious $\mathbf{V}_{j}$ we have 
\begin{equation*}
P_{j+1}\left( \mathbf{y}\right) =\int_{0}^{\left\vert \mathbf{y}\right\vert }%
\frac{1}{\left\vert \mathbf{y}\right\vert }\mathbf{y}\cdot \mathbf{V}%
_{j}\left( \frac{1}{\left\vert \mathbf{y}\right\vert }\mathbf{y}r\right) \,%
\mathrm{d}r=\int_{0}^{\left\vert \mathbf{y}\right\vert }\frac{1}{\left\vert 
\mathbf{y}\right\vert }\frac{1}{\left\vert \mathbf{y}\right\vert ^{j}}%
\mathbf{y}\cdot \mathbf{V}_{j}\left( \mathbf{y}\right) r^{j}\,\mathrm{d}r,
\end{equation*}%
implying%
\begin{equation}
P_{j+1}\left( \mathbf{y}\right) =\frac{1}{\left( j+1\right) \left\vert 
\mathbf{y}\right\vert ^{j+1}}\mathbf{y}\cdot \mathbf{V}_{j}\left( \mathbf{y}%
\right) \left\vert \mathbf{y}\right\vert ^{j+1}=\frac{1}{j+1}\mathbf{y}\cdot 
\mathbf{V}_{j}\left( \mathbf{y}\right) .  \label{Pj1}
\end{equation}%
In particular, the zero order term $\mathbf{V}_{0}$ corresponds to 
\begin{equation}
P_{1}\left( \mathbf{y}\right) =\mathbf{V}\left( 0\right) \cdot \mathbf{%
y,\quad \breve{V}}_{0}\left( \mathbf{y}\right) =0.  \label{ap1}
\end{equation}%
The first order one\ corresponds to 
\begin{equation}
P_{2}\left( \mathbf{y}\right) =\frac{1}{2}\mathbf{V}_{1}\left( \mathbf{y}%
\right) \cdot \mathbf{y}  \label{ap2}
\end{equation}%
and 
\begin{equation}
\nabla P_{2}\left( \mathbf{y}\right) =\frac{1}{2}\mathbf{V}_{1}\left( 
\mathbf{y}\right) +\frac{1}{2}\mathbf{V}_{1}^{\mathrm{T}}\left( \mathbf{y}%
\right) .  \label{ap2p}
\end{equation}%
\ Obviously, $\nabla P_{2}\left( \mathbf{y}\right) $ coincides with the
symmetric part of the linear transformation $\mathbf{V}_{1}\left( \mathbf{y}%
\right) $, and 
\begin{equation}
\mathbf{\breve{V}}_{1}\left( \mathbf{y}\right) =\frac{1}{2}\mathbf{V}%
_{1}\left( \mathbf{y}\right) -\frac{1}{2}\mathbf{V}_{1}^{\mathrm{T}}\left( 
\mathbf{y}\right)  \label{vhat}
\end{equation}%
coincides with the anti-symmetric part. For higher values of $j$ we have%
\begin{equation*}
\nabla P_{j+1}\left( \mathbf{y}\right) =\frac{1}{j+1}\nabla \left( \mathbf{y}%
\cdot \mathbf{V}_{j}\left( \mathbf{y}\right) \right) .
\end{equation*}%
Using vector calculus we obtain%
\begin{eqnarray*}
\nabla \left( \mathbf{y}\cdot \mathbf{V}_{j}\left( \mathbf{y}\right) \right)
&=&\left( \mathbf{y}\cdot \nabla \right) \mathbf{V}_{j}+\left( \mathbf{V}%
_{j}\cdot \nabla \right) \mathbf{y}+\mathbf{y}\times \left( \nabla \times 
\mathbf{V}_{j}\right) +\mathbf{V}_{j}\times \left( \nabla \times \mathbf{y}%
\right) \\
&=&\left( \mathbf{y}\cdot \nabla \right) \mathbf{V}_{j}+\mathbf{V}_{j}+%
\mathbf{y}\times \left( \nabla \times \mathbf{V}_{j}\right)
\end{eqnarray*}%
where, by Euler's identity for homogenious functions, 
\begin{equation*}
\left( \mathbf{y}\cdot \nabla \right) \mathbf{V}_{j}\left( \mathbf{y}\right)
=j\mathbf{V}_{j}\left( \mathbf{y}\right) .
\end{equation*}%
Hence%
\begin{gather*}
\nabla P_{j+1}\left( \mathbf{y}\right) =\frac{1}{j+1}\nabla \left( \mathbf{y}%
\cdot \mathbf{V}_{j}\left( \mathbf{y}\right) \right) =\frac{1}{j+1}\left(
\left( \mathbf{y}\cdot \nabla \right) \mathbf{V}_{j}+\mathbf{V}_{j}+\mathbf{y%
}\times \left( \nabla \times \mathbf{V}_{j}\right) \right) = \\
\frac{1}{j+1}\left( j\mathbf{V}_{j}+\mathbf{V}_{j}+\mathbf{y}\times \left(
\nabla \times \mathbf{V}_{j}\right) \right) =\mathbf{V}_{j}+\frac{1}{j+1}%
\mathbf{y}\times \left( \nabla \times \mathbf{V}_{j}\right) ,
\end{gather*}%
\begin{equation}
\mathbf{\breve{V}}_{j}\left( \mathbf{y}\right) =\mathbf{V}_{j}\left( \mathbf{%
y}\right) -\nabla P_{j+1}\left( \mathbf{y}\right) =-\frac{1}{j+1}\mathbf{y}%
\times \left( \nabla \times \mathbf{V}_{j}\right) .  \label{vhatj}
\end{equation}

\textbf{Acknowledgment.} The research was supported through Dr. A. Nachman
of the U.S. Air Force Office of Scientific Research (AFOSR), under grant
number FA9550-11-1-0163.

\end{document}